\documentclass{amsart}
\usepackage[english]{babel}

\usepackage{amssymb} 
\usepackage{mathtools}
\usepackage{amsthm} 
\usepackage{amsmath}
\usepackage[all]{xy} 
\usepackage{xcolor} 
\usepackage{tikz}
\usepackage{tikz-cd}
\usepackage{calc}
\usetikzlibrary{calc}
\usepackage{enumitem}

\usepackage{thmtools}

\usepackage{hyphenat} 
\usepackage{csquotes}
\usepackage{hyperref}
\hypersetup{
    final=true,
    plainpages=false,
    pdfstartview=FitV,
    pdftoolbar=true,
    pdfmenubar=true,
    bookmarksopen=true,
    bookmarksnumbered=true,
    breaklinks=true,
    linktocpage,
    colorlinks=true,
    linkcolor=teal,
    urlcolor=teal,
    citecolor=teal,
    anchorcolor=green
  }

\usepackage[
style = alphabetic, 
sorting=nyt, 
url=true,
maxnames=10,
minnames=10, 
maxalphanames=10]{biblatex}
\DeclareFieldFormat[article]{volume}{\mkbibbold{#1}\addcolon \addspace}
\DeclareFieldFormat[article]{number}{\mkbibparens{#1}}
\renewbibmacro*{in:}{}

\numberwithin{equation}{section}

\renewcommand{\to}{\longrightarrow}

\newcommand{\R}{{\mathbb{R}}}

\theoremstyle{definition}
\newtheorem{definition}{}[section]

\theoremstyle{plain}
\newtheorem{proposition}[definition]{}

\newtheorem{theorem}[definition]{}

\newtheorem{lemma}[definition]{}

\theoremstyle{definition}
\newtheorem{example}[definition]{}

\theoremstyle{remark}
\newtheorem{remark}[definition]{}

\renewcommand{\eqref}[1]{\hyperref[#1]{(\protect\NoHyper\ref{#1}\protect\endNoHyper)}}

\DeclareCiteCommand{\cite}
{}
{\bibhyperref{[{\protect\NoHyper\usebibmacro{cite}}\protect\endNoHyper\usebibmacro{postnote}]}}
{   
    \multicitedelim%
}
{}

\newcommand{\Ric}{\mathrm{Ric}}
\newcommand{\Spin}{\mathrm{Spin}}

\newcommand{\Id}{\mathrm{Id}}

\newcommand{\grad}{\mathrm{grad}}

\newcommand{\U}{\mathrm{U}}

\title{
On generalized imaginary \texorpdfstring{Spin$^c$}{Spin^c}-Killing spinors}
\date{}

\author[J. L. Carmona Jim\'enez]{José Luis {\small CARMONA JIMÉNEZ}}
\address{JLCJ: 
Institute of Mathematics “Simion Stoilow” of the Romanian Academy, 21 Calea Grivitei, 010702 Bucharest, Romania}
\email{jcarmona@imar.ro}

\thanks{
\noindent JLCJ has been supported by the PNRR-III-C9-2023-I8 grant CF 149/31.07.2023 {\em Conformal Aspects of Geometry and Dynamics} and by the project PID2021-126124NB-I00 (Spain).}

\usepackage[a4paper]{geometry}
\begin{document}

\begin{abstract}
A non-trivial spinor field $\psi$ is called a generalized imaginary $\mathrm{Spin}^c$-Killing spinor if $\nabla^{g,A} _X \psi = i\mu X \cdot \psi$ for all vector fields $X$, where $\mu$ is a real function that is not identically zero and $\nabla^{g,A}$ is the $\mathrm{Spin}^c$ Levi-Civita connection with $\U(1)$-connection $A$.
Associated with $\psi$ is a vector field $V$, the Dirac current, defined by $g(V,X) = i \langle X\cdot \psi, \psi \rangle$.
We prove that if $V$ vanishes somewhere and $\dim M \geq 3$, the manifold is locally isometric to real hyperbolic space.
When $V$ never vanishes and $\operatorname{dim} M \geq 3$, we obtain a global geometric description of all $\mathrm{Spin}^c$-Riemannian manifolds carrying such spinors, under the assumption that either the normalized Dirac current $\xi = \frac{V}{|V|}$ is complete or the leaves of $\mathcal{D} = \ker(\xi^\flat)$ are complete. Finally, we reinterpret the case of type~I generalized imaginary $\mathrm{Spin}^c$-Killing spinors in terms of parallel spinors for a suitable connection with vectorial torsion.
\end{abstract}

\maketitle


\textbf{Key words.} Imaginary Killing spinors, metric connections, conformal geometry, Kenmotsu manifolds.  


\section{Introduction}

Spinor fields on Riemannian manifolds provide a natural link between differential geometry, topology, and mathematical physics.
Their existence imposes strong geometric restrictions and, in many cases, characterizes distinguished classes of manifolds.
Classical examples include parallel spinors, which force a reduction of the Riemannian holonomy group and lead to special geometries such as Calabi--Yau, hyper-Kähler, $G_2$-manifolds, and $\Spin(7)$-manifolds; see~\cite{W1989,MS2000}.
More generally, Killing spinors encode rigid geometric information and play a central role in the study of Einstein metrics and special geometric structures.
This raises the question of how much of this rigidity persists in the $\Spin^c$ setting, particularly without assuming completeness.

In this article we study generalized imaginary $\Spin^c$-Killing spinors on $\Spin^c$-Riemannian manifolds.
More precisely, if $(M,g)$ is a $\Spin^c$-Riemannian manifold, a non-trivial spinor field $\psi$ is called a \emph{generalized imaginary $\Spin^c$-Killing spinor} if there exists a real-valued function $\mu$ not identically zero such that
\[
    \nabla^{g,A}_X\psi=i\mu\,X\cdot\psi
\]
for all vector fields $X$, where $\nabla^{g,A}$ denotes the $\Spin^c$ Levi-Civita connection with $\U(1)$-connection $A$.
The function $i\mu$ is called the Killing function of $\psi$.
Associated with $\psi$ is a distinguished vector field $V$, its \emph{Dirac current}, defined by
\[
    g(V,X)= i\langle X\cdot\psi,\psi\rangle,\qquad \forall X\in \mathfrak{X}(M).
\]
A key feature is that $q_\psi = |\psi|^4 - |V|^2$ is a non-negative constant, see~\cite{GN2015}.
Hence, the alternatives $q_\psi=0$ and $q_\psi>0$ determine two types of generalized imaginary $\Spin^c$-Killing spinors, \emph{type~I} if $|V|=|\psi|^2$ and \emph{type~II} if $|V|< |\psi|^2$.

The complete spin case is by now well understood.
Imaginary Killing spinors with constant Killing number were classified by Baum~\cite{B1989,B1989*}, while the case of non-constant Killing function was described by Rademacher~\cite{R1991}.
In the $\Spin^c$ setting, generalized imaginary $\Spin^c$-Killing spinors on complete manifolds were studied by Große and Nakad~\cite{GN2015}; in particular, in dimension $n \geq 3$, the type~II case reduces to the statement that $(M,g)$ is globally isometric to the real hyperbolic space, while the type~I case admits a warped-product description in terms of manifolds carrying parallel $\Spin^c$-spinors on the universal cover.
More recently, Lockman~\cite{Lockman2025} analyzed the type~I case under the assumption that the normalized Dirac current is complete.
These results already point to a sharp contrast between the two types in the complete Riemannian $\Spin^c$ setting.

By contrast, the problem is much less understood without ambient completeness, even in the spin setting.
In type~II, the complete theory essentially reduces to the real hyperbolic space, whereas examples without ambient completeness show that this rigidity disappears once ambient completeness is dropped; see~\cite[pp.~161--162]{BHMMM15}.
This indicates that completeness of the ambient manifold is not the most natural global hypothesis for capturing the geometry dictated by the spinor. 
The relevant geometric objects are instead the normalized Dirac current $\xi=\frac{V}{|V|}$, defined where $V\neq0$, and the orthogonal distribution $\mathcal{D}=\ker(\xi^\flat)$. 
Accordingly, we work under the assumption that either $\xi$ is complete or the leaves of $\mathcal{D}$ are complete.

Our first goal is to describe the global geometry of $\Spin^c$-Riemannian manifolds carrying generalized imaginary $\Spin^c$-Killing spinors under these natural hypotheses.
We prove that, in dimension $n\geq 3$, the existence of such a spinor imposes strong global restrictions on $(M,g)$.
First, whenever the Dirac current vanishes somewhere, no completeness assumption is needed: the manifold is locally isometric to real hyperbolic space.
Second, whenever the Dirac current is nowhere vanishing and either $\xi$ is complete or the leaves of $\mathcal{D}$ are complete, we obtain a global geometric description depending on the type of the spinor.
When $\psi$ is of type~I, we prove (in~\ref{thm:typeI-global}) a warped-product or mapping-torus description, extending the corresponding type~I results in~\cite{GN2015,Lockman2025} to the setting without ambient completeness. 
When $\psi$ is of type~II, we show in~\ref{thm:typeII-global} that the Killing function is constant.
We also prove that either $(M,g)$ is locally isometric to real hyperbolic space or is globally isometric to a warped product
\[
    \bigl((0,t_m)\times F,\ dt^2+\sinh^2(2\mu t)\,h\bigr).
\]
In the latter case, $(F,h)$ is either a $6$-dimensional strictly nearly K\"ahler manifold or a $(4k+1)$-dimensional Einstein--Sasakian manifold, with $k\in\mathbb N$, $k\geq1$. 

Our second goal is to reinterpret type~I generalized imaginary $\Spin^c$-Killing spinors using metric connections with vectorial torsion. 
Given a unit vector field $\xi$ and a function $\alpha$, we define the $(\alpha,\xi)$-connections by
\[
    \nabla:=\nabla^g-S, \qquad S_XY:= \alpha \left( g(Y,\xi)\,X-g(X,Y)\xi\right).
\]
Under natural assumptions, we show that type~I generalized imaginary $\Spin^c$-Killing spinors are equivalent to $\nabla$-parallel $\Spin^c$-spinors.
This provides a vectorial-connection formulation of the type~I case, in the same spirit as the interpretation of real Killing spinors through distinguished metric connections with skew-symmetric torsion, see~\cite{FI2002,A2006}.

The paper is organized as follows.
Section~\ref{sec:preliminaries} collects the preliminaries used throughout the paper: spinorial conventions, metric connections with vectorial torsion, and the conformal viewpoint on closed Weyl structures.
Section~\ref{sec:imaginary-killing} contains the basic definitions, motivation, and examples of generalized imaginary $\Spin^c$-Killing spinors.
In Section~\ref{sec:main} we prove the global geometric description of $\Spin^c$-Riemannian manifolds carrying such spinors, distinguishing the cases where the Dirac current vanishes and where it does not.
Finally, in Section~\ref{sec:type-I} we establish the correspondence between generalized imaginary $\Spin^c$-Killing spinors of type~I and $(\alpha,\xi)$-connections admitting parallel $\Spin^c$-spinors.

\subsection*{Acknowledgments} I would like to thank Ilka Agricola for suggesting this topic and for very useful discussions. I am also grateful to Andrei Moroianu for his comments and a careful reading of a preliminary version of the manuscript.


\section{Preliminaries}\label{sec:preliminaries}
In this section, we fix the spinorial conventions and recall the basic facts about vectorial connections and conformal changes used later.
We refer to~\cite{F2000,BHMMM15} for the definitions of spin and $\Spin^c$-structures and for the construction of the associated spinor bundles.

From now on, $(M,g)$ will denote a connected oriented $\Spin^c$-Riemannian manifold of dimension $n$, with auxiliary principal $\U(1)$-bundle $P_{\U(1)}M\to M$, determinant line bundle
\[
    \mathbb{L}:=P_{\U(1)}M\times_{\U(1)} \mathbb C,
\]
and associated spinor bundle $\Sigma_g^c M$.
We fix a connection $A$ on the auxiliary principal $\U(1)$-bundle $P_{\U(1)}M\to M$.
By the same symbol $A$, we also denote the induced Hermitian connection on $\mathbb{L}$.
The $\Spin^c$-connection induced by the Levi-Civita connection $\nabla^g$ and the auxiliary connection $A$ is denoted by $\nabla^{g,A}$.
We denote by $\Omega^A\in \Omega^2(M,\mathbb R)$ the curvature $2$-form of $A$.
Locally, if $\omega\in \Omega^1(U,\mathbb R)$ is the connection $1$-form of $A$ with respect to a local section of $P_{\U(1)}M$, then $\Omega^A|_U=d\omega$.
In such a local trivialization, for every $X\in \mathfrak X(M)$ and $\psi\in \Gamma(\Sigma_g^c M)$, the induced $\Spin^c$-connection can be written as
\[
    \nabla^{g,A}_X \psi=\nabla^g_X \psi+\frac{i}{2}\omega(X) \psi,
\]
where $\nabla^g$ denotes the spinorial Levi-Civita connection. 
We denote the musical isomorphisms determined by the metric $g$ by
\[
    (\cdot)^\sharp \colon T^*M \longrightarrow TM, \qquad (\cdot)^\flat \colon TM \longrightarrow T^*M.
\]
Recall that these maps are $\nabla^g$-parallel and satisfy $(\cdot)^\sharp\circ(\cdot)^\flat = \Id_{TM}$ and $(\cdot)^\flat\circ(\cdot)^\sharp = \Id_{T^*M}$.

The usual spin case is recovered by considering the $\Spin^c$ structure induced by a spin structure on $M$.
In this case the determinant line bundle is canonically trivial, $\mathbb{L}\cong M\times \mathbb C$, and the associated $\Spin^c$-spinor bundle is canonically identified with the usual spinor bundle, $\Sigma_g^c M\cong \Sigma M$. Moreover, for the canonical flat Hermitian connection $A_0$ on $\mathbb{L}$, the induced connection $\nabla^{g,A_0}$ coincides with the usual spinorial Levi-Civita connection.
More generally, if $A$ is flat, then on every simply connected open set the induced $\Spin^c$-connection can be identified with the usual spinorial Levi-Civita connection after choosing a local unitary $A$-parallel section of $\mathbb{L}$.

The natural Hermitian product on $\Sigma_g^c M$ is denoted by $\langle \cdot,\cdot\rangle$ and is linear in the first argument.
In particular, for every vector field $X$ and every spinor field $\varphi$, the quantity $\langle X\cdot \varphi,\varphi\rangle$ is purely imaginary.

\subsection{Metric connections with vectorial torsion}\label{sec:vectorial}

We introduce metric connections with vectorial torsion because they provide the natural connection-theoretic framework for generalized imaginary $\Spin^c$-Killing spinors of type~I.
In later sections, these connections will allow us to reformulate the spinorial equation as the condition that a suitable spinor is parallel.
The relationship between the Levi-Civita connection $\nabla^g$ and another linear connection $\nabla$ can be described by a $(2,1)$-tensor field $S$, defined by
\[
    S_XY=\nabla^g_XY-\nabla_XY,\qquad \forall X,Y\in \mathfrak X(M).
\]

\begin{definition}
    Let $V$ be a vector field on $M$.
    A metric connection $\nabla=\nabla^g-S$ is called a \emph{vectorial connection associated with $V$} (or simply a \emph{$V$-connection}) if
    \begin{equation}\label{eq:cotorsion}
        S_XY=g(Y,V)\,X-g(X,Y)\,V,
    \end{equation}
    for all $X$, $Y\in \mathfrak X(M)$.
    A $V$-connection is said to be \emph{closed} (resp. \emph{exact}) if the $1$-form $V^\flat$ is closed (resp. exact).
\end{definition}

Next, we recall the induced action on the $\Spin^c$-spinor bundle.
Let $\nabla=\nabla^g-S$ be a metric connection on $TM$.
Since, for every $X\in \mathfrak X(M)$, the endomorphism $Y\mapsto S_XY$ is skew-symmetric with respect to $g$, it defines a $2$-form, still denoted by $S_X$.
We denote by $\nabla^A$ the induced $\Spin^c$-connection associated with $\nabla$ and the auxiliary connection $A$.
Then, for all $X\in \mathfrak X(M)$ and $\psi\in \Gamma(\Sigma_g^c M)$, we have
\begin{equation}\label{eq:spinorial-difference}
    \nabla^{g,A}_X\psi=\nabla^A _X\psi+\frac12\,S_X\cdot \psi.
\end{equation}
Thus, in the case of a $V$-connection, for all $X\in \mathfrak X(M)$ and $\psi\in \Gamma(\Sigma_g^c M)$, we obtain
\begin{equation}\label{eq:vectorial-spinor}
    \nabla^A _X\psi = \nabla^{g,A}_X\psi +\frac12\bigl(X\cdot V \cdot \psi+g(V,X)\psi\bigr).
\end{equation}

Our notation follows~\cite{AK2016}.
However, that general framework is broader than needed here, so we restrict attention to the class of vectorial connections introduced below.
These connections arise naturally in the study of generalized imaginary $\Spin^c$-Killing spinors; see~\ref{ex:hyperbolic-type-I}, \ref{ex:type-II-noncomplete}, or more generally~\ref{prop:principal-1}.
Moreover, if $\nabla$ is a $V$-connection admitting a parallel $\Spin^c$-spinor $\psi$ whose Dirac current is proportional to $V$, then $\nabla$ is an $(\alpha,\xi)$-connection for some function $\alpha$ and some unit vector field $\xi$; see~\ref{thm:Parallel to Imaginary}.

\begin{definition}\label{def:nabla}
    Let $\xi\in \mathfrak X(M)$ be a unit vector field and let $\alpha\in \mathcal C^\infty(M)$ be a smooth function that is not identically zero.
    A $V$-connection $\nabla$ is called an \emph{$(\alpha,\xi)$-connection} if $V=\alpha \xi$.
    Moreover, the connection is said to be \emph{$\xi$-parallel} if $\nabla \xi=0$.
\end{definition}

Note that this definition allows $\alpha$ to vanish at some points, as long as the vector field $\xi$ is globally defined. 
Given a unit vector field $\xi$, we denote by
\[
\mathcal{D}=\ker(\xi^\flat)=\{X\in TM: g(X,\xi)=0\}
\]
the orthogonal distribution to $\xi$.
We now study the consequences for $\mathcal{D}$ of the existence of a $\xi$-parallel $(\alpha,\xi)$-connection.
Since $\nabla \xi = 0$ and the endomorphism $X \mapsto S_X \xi$ is symmetric, the $1$-form $\xi^\flat$ is closed.
Consequently, by the Frobenius theorem $\mathcal{D}$ is integrable.
Moreover, the form $\alpha\xi^\flat$ is closed if and only if $d\alpha \wedge \xi^\flat=0$.
Furthermore, because $\xi$ is $\nabla$-parallel and $\nabla$ is metric, $\mathcal{D}$ is a $\nabla$-parallel distribution.

We state the corresponding formulas for the curvature tensor, Ricci curvature and scalar curvature of a $\xi$-parallel $(\alpha,\xi)$-connection in terms of the Levi-Civita connection.
We use the following conventions for the curvature tensor and the Ricci tensor of a connection $\nabla$:
\begin{equation}\label{eq:curv}
    \begin{alignedat}{2}
        R_{XY}Z \;&=\; \nabla_{[X,Y]}Z-\nabla_X(\nabla_YZ)+\nabla_Y(\nabla_XZ),\\
        \Ric(X,Y)\;&=\; -\sum_{i=1}^n g\bigl(R_{e_iX}Y,e_i\bigr),
    \end{alignedat}
\end{equation}
for all $X$, $Y$, $Z\in \mathfrak X(M)$, where $\{e_1,\dots,e_n\}$ denotes any local orthonormal frame.

\begin{proposition}\label{P:Formulas-Curvature}
    Let $(M,g)$ be equipped with a $\xi$-parallel $(\alpha,\xi)$-connection $\nabla$.
    Let $R$ and $R^g$ denote the curvature tensors of $\nabla$ and $\nabla^g$, respectively, and let $\Ric$, $\Ric^g$ and $\kappa$, $\kappa^g$ denote the corresponding Ricci and scalar curvatures.
    Then, for all $X$, $Y$, $Z\in \mathfrak X(M)$, the following identities hold:
    \begin{align*}
        R_{XY}Z     &=   R^g_{XY}Z+\alpha^2\bigl(g(X,Z)Y-g(Y,Z)X\bigr)                                                \\
                    &\quad +X(\alpha)\bigl(g(Z,\xi)Y-g(Y,Z)\xi\bigr) -Y(\alpha)\bigl(g(Z,\xi)X-g(X,Z)\xi\bigr),\\
        \Ric(X,Y)   &=  \Ric^g(X,Y)+\big( (n-1)\alpha^2 + \xi(\alpha) \big)g(X,Y) +(n-2)X(\alpha)g(Y,\xi),            \\
        \kappa      &= \kappa^g+n(n-1)\alpha^2+2(n-1)\xi(\alpha).
    \end{align*}
\end{proposition}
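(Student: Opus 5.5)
The plan is a direct computation of $R$ from $\nabla=\nabla^g-S$ with $S_XY=\alpha\bigl(g(Y,\xi)X-g(X,Y)\xi\bigr)$, using the hypothesis $\nabla\xi=0$ to remove every derivative of $\xi$. Expanding $\nabla_X\nabla_YZ=\nabla^g_X(\nabla^g_YZ-S_YZ)-S_X(\nabla^g_YZ-S_YZ)$ and antisymmetrizing gives the standard identity
\[
R_{XY}Z=R^g_{XY}Z+(\nabla^g_XS)_YZ-(\nabla^g_YS)_XZ+S_X S_YZ-S_YS_XZ ,
\]
where the torsion-free property of $\nabla^g$ absorbs the bracket term: $S_{[X,Y]}=S_{\nabla^g_XY}-S_{\nabla^g_YX}$. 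The sign conventions in \eqref{eq:curv} have to be tracked carefully here. I will then compute the two pieces separately.

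For the derivative terms, write $S=\alpha S^\xi$. Then $(\nabla^g_XS)_YZ=X(\alpha)S^\xi_YZ+\alpha(\nabla^g_XS^\xi)_YZ$, and $\nabla^g_XS^\xi$ involves only $\nabla^g_X\xi$. The condition $\nabla\xi=0$ reads $\nabla^g_X\xi=S_X\xi=\alpha(X-g(X,\xi)\xi)$, and I substitute this. The terms $X(\alpha)S^\xi_YZ-Y(\alpha)S^\xi_XZ$ give exactly the $X(\alpha)$ and $Y(\alpha)$ terms of the statement. The $\alpha^2$ contributions come from $\alpha(\nabla^g_XS^\xi)$ and from the quadratic terms $S_XS_Y-S_YS_X$. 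The main bookkeeping obstacle is to check that the $\xi$-dependent $\alpha^2$ pieces cancel, leaving only $\alpha^2\bigl(g(X,Z)Y-g(Y,Z)X\bigr)$.

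I would carry out this cancellation pointwise in an adapted basis. Split each argument into its $\xi$-component and its $\mathcal D$-component. By multilinearity, it suffices to treat the cases where $X,Y,Z$ are each either $\xi$ or orthogonal to $\xi$. In each case both sides are easy to evaluate.

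Once the curvature formula is established, the Ricci and scalar formulas follow by tracing with the convention $\Ric(X,Y)=-\sum g(R_{e_iX}Y,e_i)$. For the $\alpha^2$ term, $-\sum\alpha^2\bigl(g(e_i,Y)g(X,e_i)-g(X,Y)\bigr)=(n-1)\alpha^2g(X,Y)$. For the derivative terms, $-\sum e_i(\alpha)\bigl(g(Y,\xi)g(X,e_i)-g(X,Y)g(\xi,e_i)\bigr)=-X(\alpha)g(Y,\xi)+\xi(\alpha)g(X,Y)$, and $+\sum X(\alpha)\bigl(g(Y,\xi)n-g(e_i,Y)g(\xi,e_i)\bigr)=(n-1)X(\alpha)g(Y,\xi)$. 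Together these give $(n-2)X(\alpha)g(Y,\xi)+\xi(\alpha)g(X,Y)$, as claimed. Tracing once more yields $\kappa=\kappa^g+n(n-1)\alpha^2+n\,\xi(\alpha)+(n-2)\xi(\alpha)=\kappa^g+n(n-1)\alpha^2+2(n-1)\xi(\alpha)$.
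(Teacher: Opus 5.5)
Your route (expand $\nabla=\nabla^g-S$, use $\nabla\xi=0$ to substitute $\nabla^g_X\xi=\alpha(X-g(X,\xi)\xi)$, check pointwise, then trace) is the direct computation the paper alludes to, and your Ricci and scalar traces are correct. The problem is the identity you start from. For the paper's convention $R_{XY}=\nabla_{[X,Y]}-\nabla_X\nabla_Y+\nabla_Y\nabla_X$, the quadratic term has the opposite sign. The expansion $\nabla_X\nabla_YZ=\nabla^g_X\nabla^g_YZ-\nabla^g_X(S_YZ)-S_X\nabla^g_YZ+S_XS_YZ$ enters $R_{XY}Z$ with a minus sign, so
\[
R_{XY}Z=R^g_{XY}Z+(\nabla^g_XS)_YZ-(\nabla^g_YS)_XZ-S_XS_YZ+S_YS_XZ .
\]
It looks as if you applied the change of curvature convention to the derivative terms but not to the commutator term.

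This matters, because the cancellation you expect depends on that sign. Set $X^\top=X-g(X,\xi)\xi$ and $T(X,Y)Z=g(X^\top,Z^\top)Y^\top-g(Y^\top,Z^\top)X^\top$. Using $(\nabla^g_XS^\xi)_YZ=g(Z,\nabla^g_X\xi)Y-g(Y,Z)\nabla^g_X\xi$, a short computation gives
\[
\alpha\bigl((\nabla^g_XS^\xi)_YZ-(\nabla^g_YS^\xi)_XZ\bigr)=\alpha^2\bigl(g(X,Z)Y-g(Y,Z)X\bigr)+\alpha^2\,T(X,Y)Z,\qquad S_XS_YZ-S_YS_XZ=\alpha^2\,T(X,Y)Z.
\]
With the correct sign the two $T$-terms cancel and the stated formula follows. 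With your sign they add instead. For example, for $X,Y,Z\in\mathcal D$ the $\alpha^2$-term would come out as $3\alpha^2\bigl(g(X,Z)Y-g(Y,Z)X\bigr)$, so your case-by-case check would fail in the purely horizontal case.

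A quick consistency test is Example~\ref{ex:hyperbolic-type-I} with $F$ flat. There $\nabla$ is flat by \eqref{eq:Misma-Curvatura}, while $R^g$ has constant sectional curvature $-\alpha^2$; this matches only the corrected sign. Once the sign is fixed, the rest of your plan goes through as written.
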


\begin{remark}
    These curvature identities follow from a direct computation based on the relation between $\nabla$ and $\nabla^g$ and the explicit expression of the difference tensor~$S$.
\end{remark}

\subsection{Weyl structures}\label{sec:conformal}
In this subsection, we relate vectorial connections to closed Weyl structures and explain how, locally, they arise from conformal changes of the metric.
This observation was already noted in~\cite[Rmk.~3.2]{AK2016}; we recall it here because it will later allow us to pass between parallel $\Spin^c$-spinors with respect to vectorial connections and generalized imaginary $\Spin^c$-Killing spinors on conformally related metrics.

Let $[g]$ be the conformal class of $g$ on $M$. 
A \emph{Weyl structure} on $(M, [g])$ is a torsion-free linear connection $\nabla ^{W}$ such that for all $h \in [g]$, there exists a $1$-form $\theta_h$ on $M$, called the \emph{Lee form} of $\nabla^W$ with respect to $h$, satisfying $\nabla^{W} h = 2\, \theta_h \otimes h$.
In other words, the Weyl structure preserves the conformal class.
After fixing $h\in [g]$, the Weyl structure is explicitly given, for all $X$, $Y\in \mathfrak{X}(M)$, by
\begin{align*}
    \nabla ^{W} _X Y = \nabla ^{h} _X Y -  \theta _h (Y) \,X + h(X,Y)\,\theta_h ^{\sharp} - \theta_h (X) \,Y, 
\end{align*}
where $\nabla^h$ is the Levi-Civita connection on $(M,h)$.
The first three terms are precisely the vectorial connection associated with $\theta_h ^\sharp$.
Thus, we may write $\nabla ^{W} _X Y = \nabla ^{\theta, h} _X Y - \theta_h (X)\, Y$, for all $X$, $Y\in \mathfrak{X}(M)$, where $\nabla^{\theta, h} _X Y = \nabla^{h} _X Y - \theta _h (Y)\, X + h(X,Y)\,\theta_h^{\sharp}$ is the vectorial connection associated with the vector field $\theta_h^{\sharp}$.
In particular, if we fix $h = g$, $V = \theta_g^{\sharp}$ and denote by $\nabla$ the $V$-connection, we have $\nabla ^{W} _X Y = \nabla _X Y - g(X, V)\, Y$, for all $X$, $Y\in \mathfrak{X}(M)$, or equivalently,
\begin{equation*}
    \nabla ^W  = \nabla  - V^\flat \otimes \Id.
\end{equation*}
Furthermore, we say the Weyl structure is \emph{closed} (resp.~\emph{exact}) if $V^\flat$ is closed (resp.~exact).

We now suppose that $V^\flat$ is closed.
Then there exists an open neighborhood $U \subset M$ such that $ V^\flat = df$, for some $f\in \mathcal{C}^{\infty} (U)$.
Let $K$ be an $(r,s)$-tensor field on $U$ and let $\nabla^W$ be the Weyl connection.
Then, for $X\in \mathfrak{X}(U)$, the covariant derivatives of $\nabla ^{W}$ and $\nabla$ are related as follows
\begin{equation} \label{eq:Symmetries}
\begin{split}
    \nabla^{W} _X (e^{(r-s) f}\, K) &= \nabla _X (e^{(r-s) f} \,K) - g(X,V)\, e^{(r-s)f }\, \Id \cdot K                                 \\
                                    &= (r -s)\, df (X)\, e^{(r-s) f} \,K + e^{(r-s) f}\, \nabla _X K + (s-r)\, g(X,V)\, e^{(r-s)f }\, K \\
                                    &= e^{(r-s) f}\,\nabla_X K,
\end{split}
\end{equation}
where ``$\cdot$" denotes the action of endomorphisms on tensors.
Indeed, \eqref{eq:Symmetries} shows that $\nabla ^W$ is the Levi-Civita connection of the conformally related metric defined below.
Furthermore, for all $X$, $Y$, $Z\in\mathfrak{X}(U)$, since $\nabla ^W_X (e^f Y) = e ^f \nabla _X Y$ and by the multilinearity of tensors, the curvature of $\nabla ^W$ and $\nabla$ are related as follows,
\begin{equation}\label{eq:Misma-Curvatura}
    R^W_{XY}Z = R_{XY}Z.
\end{equation}

We now discuss the corresponding relation between $\nabla^W$ and $\nabla$ on spinor fields, still under the assumption that $V^\flat$ is closed.
On the neighborhood $U$, the conformally related metric $\bar{g}= e^{-2f} g$ has $\nabla ^W$ as its Levi-Civita connection, so we write $\nabla^{\bar{g}}$ instead of $\nabla ^W$.
We can consider the two $\Spin^c$-spinor bundles $\Sigma _g ^c (U)$ and $\Sigma _{\bar{g}} ^c (U)$, together with the following identification:
\begin{equation}\label{eq:identif}
\begin{split}
    \overline{\cdot} \colon \Sigma_g^c (U)  &\to \Sigma_{\bar{g}}^c (U) \\
    \varphi = [u,\psi]                      &\longmapsto [e^{f} u,\psi].
\end{split}
\end{equation}
Let $\varphi \in \Sigma_g^c (U)$.
Using~\eqref{eq:identif}, we obtain that (see~\cite[Prop.~2.33]{BHMMM15}), for all $X \in \mathfrak{X}(U)$,
\[
\nabla ^{\bar{g},A} _X\bar{\varphi} = \overline{ \nabla ^{g,A} _X \varphi + \tfrac{1}{2} \, (X \cdot V \cdot \varphi + g(X,V) \varphi)}.
\]
Since $\nabla ^{g,A} _X \varphi = \nabla^A _X \varphi - \tfrac{1}{2} (X \cdot V \cdot \varphi + g(X,V) \varphi)$, we have
\begin{equation}\label{Eq:Weyl=vectorial}
    \nabla ^{\bar{g},A} _X\overline{\varphi} = \overline{ \nabla ^{A} _X  \varphi},
\end{equation}
for all $X \in \mathfrak{X}(U)$.
This spinorial covariant derivative is known as the weight-zero covariant derivative; see~\cite[Sec.~2.2.3]{BHMMM15}.

\section{Motivation and examples}\label{sec:imaginary-killing}

Here we follow the definitions and notation introduced in Section~\ref{sec:preliminaries}.
A spinor field $\psi \in \Gamma(\Sigma_g^c M)$ is called a \emph{generalized $\Spin^c$-Killing spinor} if there exists a complex-valued function $\lambda$ such that the following differential equation holds:
\begin{equation*}
    \nabla ^{g,A} _X \psi = \lambda X \cdot \psi,
\end{equation*}
for all $X \in \mathfrak{X}(M)$.
We refer to $\lambda$ as the \emph{Killing function} of $\psi$ or \emph{Killing number} of $\psi$ if $\lambda$ is constant.
When $\lambda$ is purely imaginary, we say $\psi$ is a \emph{generalized imaginary $\Spin^c$-Killing spinor}.

\subsection{Complete spin Riemannian manifolds with imaginary Killing spinors}

We briefly recall the complete spin case in order to place our results in context and to highlight the contrast with the incomplete setting.
By~\cite[p.~31, Thm.~8]{BFGK1991}, if $(M, g)$ admits a nonzero Killing spinor with Killing number $\lambda$, then $(M, g)$ is Einstein with scalar curvature $\kappa ^g = 4n(n-1)\lambda^2$.
Since the scalar curvature is real, a nonzero Killing spinor is called \emph{imaginary} if $\lambda \in i \mathbb{R}$, and \emph{real} if $\lambda \in \mathbb{R}$.

Imaginary Killing spinors, characterized by $\lambda = i \mu$ with $\mu \in \mathbb{R} \smallsetminus \{0\}$, exhibit distinct properties compared to real ones.
If $(M, g)$ is complete and admits an imaginary Killing spinor $\psi$, then $M^n$ must be non-compact; see~\cite[p.~31, Thm.~9]{BFGK1991}.
Moreover, the function $|\psi|^2$ is non-constant and never vanishes; see~\cite[p.~156, Lem.~1:(3)]{BFGK1991}.
A direct consequence is that imaginary Killing spinors cannot be parallel for any $g$-metric connection.

In spin geometry, complete Riemannian manifolds with imaginary Killing spinors exhibit specific geometric structures.
A complete, connected spin manifold $(M, g)$ admits imaginary Killing spinors with Killing number $i\mu$ if and only if $(M, g)$ is isometric to a warped product of the form $( \R\times F, dt^2 + e^{4\mu t} h )$, where $(F, h)$ is a complete connected spin manifold admitting a nonzero parallel spinor; see~\cite{B1989} for odd dimension and~\cite{B1989*} for the general case.
When the Killing function is not constant, Rademacher~\cite{R1991} classifies complete Riemannian manifolds either as warped products or as mapping tori over complete Riemannian manifolds admitting parallel spinors.
In~\ref{ex:hyperbolic-type-I}, we describe the construction of imaginary Killing spinors of type~I on the warped product described in~\cite{B1989}.

\subsection{Basic properties and motivation}
Let $\psi$ be a generalized imaginary $\Spin^c$-Killing spinor with Killing function $i\mu$.
There exists a vector field $V$, called the \emph{Dirac current} of $\psi$, such that $g(V,X) = i \langle X\cdot \psi, \psi \rangle$ for all vector fields $X$.
Whenever $V\neq 0$, we call the vector field $\xi = \frac{V}{|V|}$ \emph{the normalized Dirac current}.
A direct computation shows that, for all $X\in \mathfrak{X}(M)$, the following identities hold:
\begin{equation}\label{eq:conformal}
    X \left(|\psi|^2\right) = 2\mu V^\flat(X),\quad \nabla^g_X V = 2\mu |\psi|^2 X,\quad X \left(|V|^2\right) = 4\mu|\psi|^2 V^\flat(X).
\end{equation}
Since $\nabla^g V$ is symmetric, we have $dV^\flat = 0$ and $\mathcal{L}_V g = 2 \mathrm{Sym} (\nabla^g V^\flat) = 4 \mu |\psi|^2 g$; thus $V$ is a closed conformal vector field.
Two direct consequences of~\eqref{eq:conformal} are that the normalized Dirac current $\xi$ is geodesic whenever it is defined and that $q_{\psi} = |\psi|^4 - |V|^2$ is a constant.
Furthermore, this constant is non-negative because the Cauchy--Schwarz inequality implies that $|V|^2 = i \langle V \cdot \psi, \psi \rangle \leq |V|\,|\psi|^2$.
The alternatives $q_\psi=0$ and $q_\psi>0$ divide generalized imaginary $\Spin^c$-Killing spinors into two types: $\psi$ is of \emph{type~I} if $q_{\psi} = 0$, and of \emph{type~II} if $q_{\psi} > 0$.
Recall that generalized imaginary $\Spin^c$-Killing spinors never vanish; see~\cite[Lem.~3.1]{GN2015}.
In particular, the Dirac current $V$ of a type~I spinor never vanishes.

On the one hand, if $q_{\psi}=0$, then $|V|=|\psi|^2$ and
\[
    |V|=g(V,\xi)= i\langle \xi\cdot\psi,\psi\rangle \le |\xi\cdot\psi|\,|\psi|=|\psi|^2=|V|,
\]
so equality holds in Cauchy--Schwarz, whence $\xi\cdot\psi=\lambda\psi$ for some $|\lambda|=1$; since $\xi\cdot\xi\cdot\psi=-\psi$, we get $\lambda=\pm i$, and the identity $g(V,\xi)=|V|$ fixes the sign to $\lambda=-i$.
Conversely, if $\xi\cdot\psi=-i\psi$, then $|V|=-i\langle \xi\cdot\psi,\psi\rangle=|\psi|^2$ and $q_{\psi}=0$.
Therefore, a generalized imaginary $\Spin^c$-Killing spinor $\psi$ is of type~I if and only if its normalized Dirac current $\xi$ satisfies $\xi \cdot \psi = - i \psi$.

The following example shows that exact $\xi$-parallel $(\alpha,\xi)$-connections arise naturally in the context of type~I imaginary Killing spinors.
This will serve as the model for the correspondence established later in Section~\ref{sec:type-I}.

\begin{example}\label{ex:hyperbolic-type-I}
    Let $(F,h)$ be a connected spin manifold carrying a non-trivial parallel spinor field $\phi$, and let $\mu>0$ be a constant.
    We consider the warped product $(M,g):=(\mathbb{R}\times F,\ dt^2+e^{4\mu t}h)$.
    If $(F,h)=(\mathbb{R}^{2m},g_{\mathbb{R}^{2m}})$, then $(M,g)$ is isometric to the real hyperbolic space $\mathbb{H}^{2m+1}$ with sectional curvature $-4\mu^2$.
    We divide the example into two steps.
    First, we show that $(M,g)$ carries a natural exact $\xi$-parallel $(\alpha,\xi)$-connection $\nabla$ for some unit vector field $\xi$.
    Then, we use this connection to construct the imaginary Killing spinor.

    First, we consider $s:=\frac{1}{2\mu}e^{-2\mu t}$.
    Then $ds=-e^{-2\mu t}\,dt$, and $dt^2=e^{4\mu t}ds^2$; consequently, the metric can be written as
    \[
        g= e^{4\mu t} \left(ds^2+h\right).
    \]
    Therefore, if we set $\bar{g}:=ds^2+h$, then $\bar{g}=e^{-4\mu t}g=e^{-2f}g$, where $f=2\mu t=-\ln(2\mu s)$.
    In the notation of Section~\ref{sec:conformal}, the corresponding Lee form is $V^\flat=df=2\mu\,dt$. 
    Let $\xi:=\frac{\partial}{\partial t}$, so $V=2\mu\,\xi$. 
    Then the $(2\mu, \xi)$-connection $\nabla=\nabla^g-S$ is exact.
    Since $\frac{\partial}{\partial s}$ is $\nabla^{\bar{g}}$-parallel, by~\eqref{eq:Symmetries} the vector field $\xi$ is $\nabla$-parallel.
    Hence, $\nabla$ is an exact $\xi$-parallel $(2\mu,\xi)$-connection.

    Second, we extend the parallel spinor $\phi$ to a parallel spinor $\bar\varphi(t,x) = \phi(x)$ on $(M,\bar{g})$.
    Since $\partial_s$ is $\nabla^{\bar{g}}$-parallel, the spinor $\bar\varphi$ splits into two parallel spinors $\bar\varphi^\pm$ satisfying $\partial_s\cdot_{\bar{g}}\bar\varphi^\pm=\mp i\,\bar\varphi^\pm$.
    Without loss of generality, we can assume that $\partial_s\cdot_{\bar{g}}\bar\varphi=i\,\bar\varphi$; the other case is analogous.
    By the identification~\eqref{eq:identif}, we can identify $\bar\varphi \in \Sigma_{\bar{g}}^c$ with a spinor field $\varphi \in \Sigma_{g}^c$, and because~\eqref{Eq:Weyl=vectorial} holds and $\xi =- e^{-f} \frac{\partial}{\partial s}$, the spinor $\varphi$ satisfies $\xi \cdot_g \varphi = - i \varphi$ and
    \[
        \nabla^{g,A_0} _X \varphi = \nabla^{A_0}_X \varphi - \mu (X \cdot \xi \cdot \varphi + g(X,\xi) \varphi) = \mu i(X \cdot \varphi - g(X,\xi) \xi\cdot \varphi).
    \]
    Hence, the spinor $\psi = e^{\mu t} \varphi$ is a type~I imaginary Killing spinor with Killing number $i\mu$. 
\end{example}

The previous example illustrates the geometric mechanism behind type~I spinors: they arise naturally from exact $\xi$-parallel $(\alpha,\xi)$-connections.
We now turn to type~II, where the complete theory is much more rigid, although incomplete examples still exist.

If a connected complete spin Riemannian manifold carries a non-trivial imaginary Killing spinor $\psi$ of type~II with Killing number $i\mu$, then $(M,g)$ is isometric to the real hyperbolic space of constant sectional curvature $-4\mu^2$, see~\cite{R1991}.
Analogously, in the $\Spin^c$ setting, if the dimension of the manifold is $n\geq 3$, then the same conclusion holds; see~\cite{GN2015}.
By contrast, when $n=2$,~\cite[Sec.~5]{GN2015} shows that there exist examples with non-constant Killing number.
In particular, the only complete $\Spin^c$-Riemannian manifolds admitting imaginary $\Spin^c$-Killing spinors of type~II with constant Killing number are the real hyperbolic spaces.
Nevertheless, incomplete warped-product examples of generalized imaginary $\Spin^c$-Killing spinors of type~II exist; see the following example.

\begin{example}\label{ex:type-II-noncomplete}
    Let $(F,h)$ be a connected $\Spin^c$-Riemannian manifold with auxiliary connection $A^F$. Suppose that $(F,h,A^F)$ carries two non-trivial real $\Spin^c$-Killing spinors $\Phi ^\pm$ with Killing numbers $\pm\mu$, where $\mu>0$.
    On
    \[
        M=(0,\infty)\times F,\qquad g=dt^2+\sinh^2(2\mu t)\,h,
    \]
    we take the induced $\Spin^c$-structure and the pull-back auxiliary connection $A=\pi_F^*A^F$. Note that the metric $g$ is incomplete.
    Then the spin construction of~\cite[pp.~160--161]{BFGK1991} extends verbatim to this setting and produces a generalized imaginary $\Spin^c$-Killing spinor with Killing number $i\mu$.
    In particular, in the spin case, if $(F,h)$ is not locally isometric to the round sphere, then the resulting spinor is of type~II; see~\cite[pp.~160--161]{BFGK1991}.
\end{example}

\section{\texorpdfstring{On generalized imaginary Spin$^c$-Killing spinors}{On generalized imaginary Spin-c-Killing spinors}} \label{sec:main}

In this section, we combine the identities derived in Section~\ref{sec:imaginary-killing} with the vectorial-connection viewpoint from Section~\ref{sec:preliminaries} in order to describe the geometry of $\Spin^c$-Riemannian manifolds carrying generalized imaginary $\Spin^c$-Killing spinors.
We first treat the case where the Dirac current never vanishes (subsection~\ref{sec:charact}), then the case where it has zeros (subsection~\ref{sec:vanish}), and we obtain a further refinement under an irreducibility assumption on the orthogonal distribution $\mathcal{D}$ (subsection~\ref{sec:irreducible}).

Throughout this section and the results that follow, we assume the following hypothesis:
\begin{enumerate}[label=\textup{(H\arabic*)},ref=H\arabic*]
    \item\label{assumption:H1} 
    $(M,g)$ is a $\Spin^c$-Riemannian manifold that admits a generalized imaginary $\Spin^c$-Killing spinor $\psi$ with Dirac current $V$ and Killing function $i\mu$. 
\end{enumerate}
Furthermore, we use the following notation, whenever it is defined,
\begin{equation*}
    \alpha = 2\mu \frac{|\psi|^2}{|V|},                 \quad
    \xi = \frac{V}{|V|},                                \quad
    \mathcal{D} = \ker(\xi^\flat),                      \quad
    \mathcal{L} = \operatorname{span}_{\mathbb{R}} \xi, \quad
    q_{\psi} = |\psi|^4 - |V|^2.
\end{equation*}

\subsection{When the Dirac current never vanishes}\label{sec:charact}
When the Dirac current is nowhere vanishing, the spinor determines a natural geometric structure adapted to the normalized Dirac current.
We first show that the associated $(\alpha,\xi)$-connection is exact and satisfies $\nabla\xi=0$; in particular, the orthogonal distribution $\mathcal D=\ker(\xi^\flat)$ is integrable and $\nabla$-parallel.
We then analyze the spinorial geometry induced on the leaves of $\mathcal D$.
These two ingredients are combined with a conformal splitting argument to obtain the simply connected global models.
Finally, we study the action of the deck transformation group and derive the global classifications in type~I and type~II.

\begin{proposition}\label{prop:principal-1}
    Under hypothesis~\eqref{assumption:H1}, if $V$ never vanishes, then the $(\alpha, \xi)$-connection $\nabla$ is exact and satisfies $\nabla \xi = 0$.
    Moreover, $\mathcal{D}$ is integrable and a $\nabla$-parallel distribution, and $\alpha$ is constant along the leaves of $\mathcal{D}$.
\end{proposition}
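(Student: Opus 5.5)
The plan is to work directly from the identities~\eqref{eq:conformal}. On the open set where $V\neq 0$, which is all of $M$ by assumption, the fields $\xi=V/|V|$ and $\alpha=2\mu|\psi|^2/|V|$ are smooth. The statement then splits into three claims: exactness of $\alpha\xi^\flat$, the identity $\nabla\xi=0$, and constancy of $\alpha$ along $\mathcal D$. The remaining assertions, integrability and $\nabla$-parallelism of $\mathcal D$, were already derived in Section~\ref{sec:vectorial} from $\nabla\xi=0$, so it suffices to quote them.

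\textbf{Exactness.} Since $V$ never vanishes, $|V|>0$. By~\eqref{eq:conformal},
\[
X(|V|)=\frac{X(|V|^2)}{2|V|}=2\mu\frac{|\psi|^2}{|V|}V^\flat(X)=\alpha\,|V|\,\xi^\flat(X).
\]
Hence
\[
d\ln|V|=\alpha\,\xi^\flat=(\alpha\xi)^\flat .
\]
So the $(\alpha,\xi)$-connection, whose vector field is $\alpha\xi$, has Lee form $d f$ with $f=\ln|V|$, and is therefore exact. I will also note that $\alpha$ is not identically zero: $\alpha=0$ somewhere would force $\mu=0$ there, and the definition of an $(\alpha,\xi)$-connection only requires $\alpha\not\equiv 0$, which holds because $\mu\not\equiv0$ and $|\psi|>0$.

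\textbf{$\nabla\xi=0$.} Next I compute $\nabla^g_X\xi$ from $\nabla^g_XV=2\mu|\psi|^2X$:
\[
\nabla^g_X\xi=\frac{2\mu|\psi|^2}{|V|}X-\frac{X(|V|)}{|V|^2}V=\alpha\bigl(X-g(X,\xi)\xi\bigr).
\]
On the other hand, with $S_XY=\alpha\bigl(g(Y,\xi)X-g(X,Y)\xi\bigr)$ we have $S_X\xi=\alpha\bigl(X-g(X,\xi)\xi\bigr)$. Therefore $\nabla_X\xi=\nabla^g_X\xi-S_X\xi=0$.

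\textbf{Constancy of $\alpha$ along $\mathcal D$.} This is the step needing an idea: a priori $\mu$ is only a function, so we need $X(\alpha)=0$ for $X\in\mathcal D$. The cleanest route is to use $d(\alpha\xi^\flat)=d\,d\ln|V|=0$. Since $\xi^\flat$ is closed (it is $d$ of something only locally, but closedness follows from $\nabla\xi=0$ as in Section~\ref{sec:vectorial}), this gives
\[
0=d\alpha\wedge\xi^\flat+\alpha\,d\xi^\flat=d\alpha\wedge\xi^\flat .
\]
Thus $d\alpha$ is proportional to $\xi^\flat$, which means $X(\alpha)=0$ for all $X\in\mathcal D$.

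The only subtlety is avoiding circularity: closedness of $\xi^\flat$ must come from the symmetric form $\nabla^g\xi=\alpha(\Id-\xi^\flat\otimes\xi)$ computed above. It does, since $d\xi^\flat(X,Y)=g(\nabla^g_X\xi,Y)-g(\nabla^g_Y\xi,X)=0$ directly. With that, the proof is complete.
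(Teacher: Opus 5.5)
Your proposal is correct and follows essentially the same route as the paper: you compute $\nabla^g_X\xi=\alpha\bigl(X-g(X,\xi)\xi\bigr)$ to get $\nabla\xi=0$ and the closedness of $\xi^\flat$, observe that $\alpha\xi^\flat=d\ln|V|$ for exactness, and deduce $d\alpha\wedge\xi^\flat=0$. You also spell out two points the paper leaves implicit: that $d\alpha\wedge\xi^\flat=0$ follows from $d(\alpha\xi^\flat)=0$ together with $d\xi^\flat=0$, and that $\alpha\not\equiv 0$ because $\mu\not\equiv 0$ and $|\psi|>0$.
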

\begin{proof}
    Let $X \in \mathfrak{X}(M)$.
    We first compute the covariant derivative of the normalized Dirac current $\xi$.
    By~\eqref{eq:conformal}, we obtain $X(|V|) = \alpha V^\flat(X)$ and, using this equality, we compute
    \begin{equation}\label{eq:derivada-xi}
        \nabla^g_X \xi = \alpha \bigl(X - g(X,\xi)\, \xi\bigr).
    \end{equation}
    We define the $(\alpha, \xi)$-connection $\nabla$ by $\nabla = \nabla^g - S$, where $S$ is the vectorial endomorphism field associated with $\alpha \xi$.
    We check that, by definition, $\nabla \xi = 0$ and consequently, the orthogonal distribution $\mathcal{D}$ is $\nabla$-parallel as well.
    Since $\nabla^g \xi$ is symmetric, we have $\xi ^\flat$ is closed and thus, by the Frobenius theorem, $\mathcal{D}$ is integrable. 
    
    We check that $\alpha \xi^\flat$ is exact.
    Using again that $X(|V|) = \alpha V^\flat (X)$, we divide by $|V|$ and obtain
    \[
        \frac{X(|V|)}{|V|} = \alpha \xi^\flat(X).
    \]
    Hence, $\alpha \xi^\flat = d(\ln |V|)$ is exact.
    Furthermore, we have $d\alpha \wedge \xi^\flat = 0$, and thus $\alpha$ is constant along the leaves of $\mathcal{D}$.
\end{proof}

Once the exact $(\alpha, \xi)$-connection has been identified, the next step is to understand the geometry induced on the leaves of the orthogonal distribution $\mathcal{D}$.

\begin{proposition}\label{prop:leaves}
    Under hypothesis~\eqref{assumption:H1}, if $V$ never vanishes, then:
    \begin{itemize}
        \item if $\psi$ is of type~I, the leaves of $\mathcal{D}$ admit a non-trivial parallel $\Spin^c$-spinor;
        \item if $\psi$ is of type~II, the leaves of $\mathcal{D}$ admit at least two non-trivial real $\Spin^c$-Killing spinors with opposite Killing numbers $\pm \mu \frac{\sqrt{q_\psi}}{|V|}$, where $q_{\psi} = |\psi|^4 - |V|^2$.
    \end{itemize}
\end{proposition}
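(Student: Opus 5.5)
The plan is to restrict $\psi$ to a leaf $N$ of $\mathcal{D}$ and view $N$ as a hypersurface of $(M,g)$ with unit normal $\xi$. By~\eqref{eq:derivada-xi}, the Weingarten map is $\nabla^g_X\xi=\alpha X$ for $X\in T N$, so $N$ is totally umbilical with constant mean curvature $\alpha$ along $N$; here $\alpha$ is constant on the leaf by~\ref{prop:leaves}'s predecessor,~\ref{prop:principal-1}. The $\Spin^c$-structure of $M$ restricts to $N$ with auxiliary connection $A|_N$. We identify $\Sigma^c_g M|_N$ with $\Sigma^c N$ when $n-1$ is even, or with $\Sigma^c N\oplus\Sigma^c N$ when $n-1$ is odd. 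We use the intrinsic Clifford action $X\bullet\varphi=X\cdot\xi\cdot\varphi$ and the spinorial Gauss formula
\[
\nabla^{g,A}_X\varphi=\nabla^{N,A}_X\varphi-\tfrac12\,\nabla^g_X\xi\cdot\xi\cdot\varphi=\nabla^{N,A}_X\varphi-\tfrac{\alpha}{2}X\cdot\xi\cdot\varphi,\qquad X\in TN .
\]
Combined with the Killing equation, this gives
\[
\nabla^{N,A}_X\psi=i\mu X\cdot\psi+\tfrac{\alpha}{2}X\cdot\xi\cdot\psi .
\]
The remaining task is to diagonalize this equation with respect to the endomorphism $\xi\cdot$, which has eigenvalues $\pm i$ and anticommutes with $X\cdot$ for $X\in\mathcal D$.

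\textbf{Type~I.} Here $\xi\cdot\psi=-i\psi$, and $\alpha=2\mu|\psi|^2/|V|=2\mu$. Substituting gives $\nabla^{N,A}_X\psi=i\mu X\cdot\psi-i\mu X\cdot\psi=0$. Thus $\psi|_N$ is a nonzero (since $\psi$ never vanishes) parallel section. It lies in the $(-i)$-eigenbundle of $\xi\cdot$, which is preserved by $\nabla^{N,A}$ and is identified with (a copy of) $\Sigma^c N$. This gives the first claim.

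\textbf{Type~II.} Write $\psi=\psi_++\psi_-$ with $\xi\cdot\psi_\pm=\pm i\psi_\pm$. Since $X\cdot$ swaps the eigenbundles, the equation splits as
\[
\nabla^{N,A}_X\psi_+=i(\mu-\tfrac{\alpha}{2})X\cdot\psi_-,\qquad \nabla^{N,A}_X\psi_-=i(\mu+\tfrac{\alpha}{2})X\cdot\psi_+ .
\]
We express this through $\bullet$: for $X\in TN$, $X\cdot\psi_\mp=\mp i\,X\bullet\psi_\mp$ up to a fixed sign convention. Note also that $|\psi_\pm|^2=\tfrac12(|\psi|^2\pm|V|)$, since $|V|=-i\langle\xi\cdot\psi,\psi\rangle=|\psi_-|^2-|\psi_+|^2$ up to sign conventions. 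Using $\alpha=2\mu|\psi|^2/|V|$, the coefficients become
\[
\mu\mp\tfrac{\alpha}{2}=\mu\,\frac{|V|\mp|\psi|^2}{|V|},
\]
with $|\psi_\pm|^2$ constant on $N$. We then look for constants $a,b$ such that $\Phi^\pm=a\psi_+\pm b\psi_-$ satisfy $\nabla^{N,A}_X\Phi^\pm=\pm\lambda X\bullet\Phi^\pm$. This requires $\lambda^2=(\mu^2-\alpha^2/4)$ times a sign, and indeed
\[
\tfrac{\alpha^2}{4}-\mu^2=\mu^2\frac{|\psi|^4-|V|^2}{|V|^2}=\mu^2\frac{q_\psi}{|V|^2}.
\]
The sign is such that $\lambda=\mu\sqrt{q_\psi}/|V|$ is real, with $a/b=\sqrt{|\psi|^2+|V|}/\sqrt{|\psi|^2-|V|}$ up to sign. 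Here $q_\psi>0$ ensures both $\psi_\pm$ are nonzero, so both $\Phi^\pm$ are non-trivial, and $|V|$ is constant on $N$, so $\lambda$ is a constant on each leaf.

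The main obstacle is the bookkeeping of signs and conventions in the Gauss formula and the identification $X\bullet=X\cdot\xi\cdot$. In odd leaf dimension, one must also project onto one summand $\Sigma^c N$ while keeping both Killing spinors non-trivial. I would fix conventions first via the Euclidean check that the type~I computation gives exactly $0$, and then reuse them for type~II.
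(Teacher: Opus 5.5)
Your argument is the paper's, organized slightly differently. Your Gauss-formula identity $\nabla^{N,A}_X\psi=i\mu X\cdot\psi+\tfrac{\alpha}{2}X\cdot\xi\cdot\psi$ is exactly the paper's observation that the leafwise connection is the restriction of the $(\alpha,\xi)$-connection $\nabla^A$. Your splitting along $\xi\cdot$ is the paper's, with the labels $\pm$ exchanged. Your ratio $a/b=\sqrt{|\psi|^2+|V|}/\sqrt{|\psi|^2-|V|}$ is the paper's $\rho_+/\rho_-$ in $\Phi^\pm=\rho_-\psi^+\pm\rho_+\psi^-$.

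Type~I, and type~II for $n$ odd, go through. Your sign slips are harmless. With $\xi\cdot\psi_\pm=\pm i\psi_\pm$ and $g(V,\xi)=i\langle\xi\cdot\psi,\psi\rangle$ one actually gets $|V|=|\psi_-|^2-|\psi_+|^2$ and $X\cdot\psi_\mp=\pm i\,X\bullet\psi_\mp$. But $\lambda^2=(\tfrac{\alpha}{2}-\mu)(\tfrac{\alpha}{2}+\mu)$ involves only the product of the two conversion factors, which is $1$ for either convention, so $\lambda$ is indeed real. In type~I, the $(-i)$-eigenbundle of $\xi\cdot$ is not a copy of $\Sigma^cN$, since $\bullet$ interchanges the two eigenbundles. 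Still, a nonzero parallel section of $\Sigma^c_gM|_N$ yields one of $\Sigma^cN$: take a nonzero chiral component and apply $\xi\cdot$ if needed.

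\textbf{The gap.} The step you defer at the end cannot be carried out in general: for $n$ even you cannot project to $\Sigma^cN$ and keep both signs.

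First, $\xi\cdot\Phi^+=i\Phi^-$, so $\Phi^-$ carries nothing beyond $\Phi^+$. For $n$ even, write $\Sigma^c_gM|_N=S_1\oplus S_2$ (chiral halves). Both are invariant under $\nabla^{N,A}$ and $\bullet$, and one of them, say $S_1$, is $\Sigma^cN$. The map $\xi\cdot\colon S_2\to S_1$ commutes with $\nabla^{N,A}$ and anticommutes with $\bullet$. Writing $\Phi^+=\Phi^+_1+\Phi^+_2$, the intrinsic Killing spinors you obtain are $\Phi^+_1$ (number $\lambda$) and $\xi\cdot\Phi^+_2$ (number $-\lambda$). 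Both are nonzero only if $\Phi^+$ is not chiral, and nothing forces that. For $n$ odd, $\xi\cdot$ acts on $\Sigma^cN$ as a multiple of the volume element, which is why that case works.

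Here is a counterexample for $n$ even. Let $(F^7,h)$ be a proper nearly parallel $G_2$-manifold such as the squashed $S^7$, whose Killing spinors all have the same sign. On $M=(0,\infty)\times F$ with $\mu>0$:
\begin{itemize}
\item let $\varphi$ be the $\partial_t$-parallel extension of such a spinor, placed in the chiral half where its Killing number $\lambda_0$ for $X\cdot\partial_t\cdot$ is positive;
\item set $g=dt^2+(\lambda_0/\mu)^2\sinh^2(2\mu t)\,h$;
\item set $\psi=\cosh(\mu t)\,\varphi+i\sinh(\mu t)\,\partial_t\cdot\varphi$.
\end{itemize}
Then $\nabla^g_X\psi=i\mu X\cdot\psi$, $|\psi|^2=\cosh(2\mu t)|\varphi|^2$ and $V=\sinh(2\mu t)|\varphi|^2\,\partial_t$. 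So $\psi$ is of type~II with nowhere-vanishing $V$. Yet $\Phi^+\propto\varphi$ and $\Phi^-\propto\partial_t\cdot\varphi$, and the leaves $\{t\}\times F$ carry Killing spinors of only one sign.

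Hence for $n$ even the second bullet, read in $\Sigma^cN$, is false; that is the reading the paper needs when it later applies Moroianu's and Bär's classifications. What your argument proves is a Killing spinor of at least one of the two signs. Equivalently, it gives Killing sections of both signs in the doubled bundle $\Sigma^c_gM|_N$ with Clifford action $X\cdot\xi\cdot$. The paper's proof has the same blind spot: it uses $\iota^*$ as if $\Sigma^c_gM|_F\cong\Sigma^cF$ in every dimension and never checks $\iota^*\Phi^\pm\neq0$.
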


\begin{proof}
    Since $\nabla \xi = 0$ and $(i\xi\cdot)^2 = 1$, the endomorphism $i\xi\cdot$ splits $\Sigma_g^c M$ into two orthogonal $\nabla^A$-parallel subbundles,
    \[
        \Sigma_g^c M = \Sigma^+ M \oplus \Sigma^- M,
    \]
    where $\Sigma^\pm M$ are the eigensubbundles associated with the eigenvalues $\pm 1$.
    Let $\psi^+$ and $\psi^-$ denote the projections of $\psi$ onto $\Sigma^+ M$ and $\Sigma^- M$, respectively.
    We now compute their $\nabla^A$-covariant derivatives. 
    Recall that $\nabla = \nabla^g - S$.
    The skew-symmetric endomorphism $Y \mapsto S_X Y$ corresponds to the $2$-form $S_X = - \alpha (X\wedge \xi)$.
    Hence
    \[
        \frac{1}{2} S_X \cdot = - \frac{\alpha}{2} (X\cdot \xi \cdot + g(X,\xi)).
    \]
    Thus,
    \begin{align*}
        \nabla^A_X \psi &= \nabla^{g,A}_X \psi - \frac{1}{2} S_X \cdot \psi = i\mu X \cdot \psi + \frac{\alpha}{2} (X\cdot \xi \cdot+ g(X,\xi)) \psi     \\
                        &= i\left(\mu - \frac{\alpha}{2}\right) X \cdot \psi^+ + i \left(\mu + \frac{\alpha}{2}\right) X\cdot \psi^- + \frac{\alpha}{2} g(X,\xi) \psi^+ + \frac{\alpha}{2} g(X,\xi) \psi^-,
    \end{align*}
    Writing $\bar{X} = X - g(X,\xi) \xi$, we decompose $X = \bar{X} + g(X,\xi) \xi$.
    Since $\bar{X}$ and $\xi$ are orthogonal,
    \begin{align*}
        (i\xi)\cdot (\bar{X}\cdot \psi^\pm) &= - \bar{X} \cdot (i\xi)\cdot \psi^\pm = \mp \bar{X} \cdot \psi^\pm.
    \end{align*}
    In other words, $\bar{X} \cdot $ is an endomorphism that interchanges the eigensubbundles $\Sigma^+ M$ and $\Sigma^- M$.
    Therefore, 
    \begin{equation*}
    \begin{alignedat}{111}
        \nabla^A_X \psi^+   
        &= i\left(\mu + \frac{\alpha}{2}\right) \bar{X} \cdot \psi^- + \left(\mu - \frac{\alpha}{2}\right) g(X,\xi) \psi^+ + \frac{\alpha}{2}g(X,\xi) \psi^+ \\          
        &= i\left(\mu + \frac{\alpha}{2}\right) \bar{X} \cdot \psi^- + \mu g(X,\xi) \psi^+ .
    \end{alignedat}
    \end{equation*}
    and, analogously,
    \begin{equation*}
        \nabla^A_X \psi^- = i\left(\mu - \frac{\alpha}{2}\right) \bar{X} \cdot \psi^+ - \mu g(X,\xi) \psi^-.
    \end{equation*}

    Next, we compute the variation of the norms of $\psi^+$ and $\psi^-$:
    \begin{align*}
        X \left(|\psi^+|^2\right)
        &= 2 \mathrm{Re} \langle \nabla^A_X \psi^+, \psi^+ \rangle \\
        &= 2 \mathrm{Re} \left\langle i\left(\mu + \frac{\alpha}{2}\right) \bar{X} \cdot \psi^- + \mu g(X,\xi) \psi^+, \psi^+ \right\rangle \\
        &= 2 \left(\mu + \frac{\alpha}{2}\right) \mathrm{Re} \left\langle i \bar{X} \cdot \psi^- , \psi^+ \right\rangle + 2\mu g(X,\xi) |\psi^+|^2
    \end{align*}
    and, analogously,
    \begin{align*}
        X \left(|\psi^-|^2\right) 
        &= 2\left(\mu - \frac{\alpha}{2}\right) \mathrm{Re} \left\langle i\bar{X} \cdot \psi^+, \psi^- \right\rangle - 2\mu g(X,\xi) |\psi^-|^2.
    \end{align*}
    Set $\rho_{\pm} = |\psi^{\pm}|$.
    Since $\psi^+$ and $\psi^-$ are orthogonal,
    \[
        |\psi|^2 = \rho_+^2 + \rho_-^2, \qquad |V| = \rho_+^2 - \rho_-^2.
    \]
    Since $\bar{X}(|\psi|^2) = 0$ and $\bar{X}(|V|^2) = 0$, it follows that $\bar{X}(\rho_+ ^2) = \bar{X}(\rho_- ^2) = 0$. 

    We now distinguish between the two types of spinors.
    If $\psi$ is of type~I, then, by our convention, $\psi = \psi^+$ and $\xi \cdot \psi = - i \psi$.
    We therefore consider the spinor field $\phi = \frac{\psi}{|\psi|}$ and compute its $\nabla^A$-covariant derivative: 
    \begin{equation}\label{eq:spinor-paralelo}
        \nabla^A _X \phi =  \mu g(X,\xi) \phi - \frac{X(|\psi|)}{|\psi|} \phi =  \mu g(X,\xi) \phi - \frac{\mu |V|}{|\psi|^2}g(X,\xi) \phi = 0.
    \end{equation}
    If $\psi$ is of type~II, then $\rho_+$ and $\rho_-$ are both nonzero.     
    Hence $X (\rho_+) = \mu \rho_+ g(X,\xi)$ and $X (\rho_-) = - \mu \rho_- g(X,\xi)$.
    In particular, $X (\rho_+ \rho_-) = 0$, so we may define the constant $c = \rho_+\rho_-$.
    Note that $4c^2 = q_{\psi}$.
    We have
    \begin{align*}
        \mu - \frac{\alpha}{2} &= \mu - \mu \frac{|\psi|^2}{|V|} = \mu - \mu \frac{\rho_+^2 + \rho_-^2}{\rho_+^2 - \rho_-^2} = - 2\mu \frac{\rho_-^2}{\rho_+^2 - \rho_-^2} = - 2\mu \frac{\rho_-^2}{|V|} , \\
        \mu + \frac{\alpha}{2} &= \mu + \mu \frac{|\psi|^2}{|V|} = \mu + \mu \frac{\rho_+^2 + \rho_-^2}{\rho_+^2 - \rho_-^2} = 2\mu \frac{\rho_+^2}{\rho_+^2 - \rho_-^2} =  2\mu \frac{\rho_+^2}{|V|}.
    \end{align*}
    Then, we can define $\phi^\pm = \rho_\mp \psi^\pm$ and compute
    \begin{align*}
        \nabla^A _X \phi^+ 
        &= X(\rho_-)\psi^+ + \rho_- \nabla^A _X \psi^+ \\
        &= -\mu \rho_- g(X,\xi)\psi^+ + \rho_-\left( 2\mu i \frac{\rho_+ ^2}{|V|} \bar X\cdot \psi^- + \mu g(X,\xi)\psi^+ \right)  = 2i\mu \frac{c}{|V|}\,\bar X\cdot \phi^-,
    \end{align*}
    and, analogously,
    \[
        \nabla^A _X \phi^- = -2i\mu \frac{c}{|V|}\,\bar X\cdot \phi^+ .
    \]
    We now define $\Phi^\pm = \phi^+ \pm \phi^-$.
    A direct computation shows that
    \begin{equation}\label{eq:spinors-total}
    \begin{alignedat}{111}
        \nabla^A _X \Phi^+  &= -2i\mu \frac{c}{|V|}\,\bar X\cdot \Phi^-, \qquad \nabla^A _X \Phi^-
                            &= 2i\mu \frac{c}{|V|}\,\bar X\cdot \Phi^+.
    \end{alignedat}
    \end{equation}

    It remains to translate these ambient spinorial equations into equations on the leaves of $\mathcal D$.
    Let $F$ be a leaf of $\mathcal{D}$ and let $\nabla^\top$ be the $\Spin^c$-connection associated with the Levi-Civita connection and the connection $A$.
    We now compute the covariant derivative of the projection of $\Phi^\pm$ onto $F$.
    Let $\iota \colon F \to M$ be the inclusion map.
    We recall that the spinorial connection $\nabla^\top$ is related to $\nabla^{g,A}$ by the following formula, see~\cite[Thm.~2.39]{BHMMM15},
    \[
        \nabla^\top_{\bar X} \iota^* \varphi = \iota^* \left(\nabla^{g,A}_{\bar X} \varphi\right) + \frac{1}{2} \iota^* \left(  \left(\nabla^g_{\bar X} \xi\right) \cdot \xi \cdot \varphi \right),
    \]
    for all $\bar X \in \mathfrak{X}(F)$ and $\varphi \in \Sigma_g^c M$.
    Furthermore, the Clifford multiplication on $F$ is denoted by $\bullet$ and is related to the Clifford multiplication on $M$ as follows (see~\cite[Lem.~2.38]{BHMMM15}):
    \[
        \bar X \bullet \iota^*\varphi= \iota^* (\bar X \cdot \xi \cdot \varphi),
    \]
    for all $\bar X \in \mathfrak{X}(F)$ and every $\varphi \in \Sigma_g^c M$.
    Using these two relations, we can compute the covariant derivative of the projection of $\Phi^\pm$ onto $F$.
    First, by~\eqref{eq:derivada-xi}, we have $\left(\nabla^g_{\bar X} \xi\right) \cdot \xi \cdot = - S_{\bar X} \cdot$, for all $\bar X\in \mathfrak{X}(F)$.
    Hence, we obtain that
    \[
        \nabla^\top_{\bar X} \iota^* \varphi =   \iota^* \left(\nabla^A _{\bar X} \varphi\right),
    \]
    for all $\bar X \in \mathfrak{X}(F)$ and $\varphi \in \Sigma_g^c M$.
    
    We now restrict the spinors constructed in the two cases to the leaves of $\mathcal D$.
    If $\psi$ is of type~I, then $\nabla^A \phi = 0$ and consequently, $\nabla^\top_{\bar X} \iota^* \phi = \iota^* \left( \nabla^A _{\bar X} \phi \right) = 0$. 
    If $\psi$ is of type~II, then we use~\eqref{eq:spinors-total} and $i\phi^\pm = \mp \xi \cdot \phi^\pm$ to obtain
    \begin{align*}
        \nabla^\top_{\bar X} \iota^* \Phi^+ \quad&= \iota^* \left( \nabla^A _{\bar X} \Phi^+ \right) = \iota^* \left( - 2i\mu \frac{c}{|V|}\,\bar X\cdot \Phi^- \right) \\
        &= - 2\mu \frac{c}{|V|}\, \iota^* \left( \bar X\cdot (i\phi^+ - i\phi^-) \right) \\
        &= 2\mu \frac{c}{|V|}\, \iota^* \left( \bar X\cdot \xi \cdot (\phi^+ + \phi^-) \right) =\quad  2\mu \frac{c}{|V|}\, \iota^* \left( \bar X\cdot \xi \cdot \Phi^+ \right).
    \end{align*}
    Thus, we obtain that 
    \[
        \nabla^\top_{\bar X} \iota^* \Phi^+ =  2\mu \frac{c}{|V|}\,  \bar X \bullet \iota^* \Phi^+, \quad \nabla^\top_{\bar X} \iota^* \Phi^- = -2\mu \frac{c}{|V|}\,  \bar X \bullet \iota^* \Phi^-.
    \]
    This proves the result.
\end{proof}

The previous proposition gives the leafwise spinorial geometry.
To turn this information into a global description of $M$, we now use a conformal splitting argument.
We use the following result, which is not explicitly stated in~\cite{PR1993} but follows directly from~\cite[Cor.~2]{PR1993}.

\begin{theorem}\label{thm:splitting}
    Let $(M,\bar{g})$ be a simply connected Riemannian manifold. 
    Assume that $M$ admits two complementary, orthogonal, integrable, and $\nabla^{\bar{g}}$-parallel distributions $\mathcal L$ and $\mathcal F$. 
    If one of the distributions is complete, then $(M,\bar{g})$ is isometric to a Riemannian product $(L,ds^2) \times (F,h)$, where $(L,ds^2)$ and $(F,h)$ are integrable Riemannian submanifolds of the foliations $\mathcal L$ and $\mathcal F$, respectively.
\end{theorem}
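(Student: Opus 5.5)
The plan is to reduce the statement to the de Rham--Wu type splitting for complete parallel distributions, in the foliated form of~\cite[Cor.~2]{PR1993}. Since $\mathcal L$ and $\mathcal F$ are $\nabla^{\bar g}$-parallel and orthogonal, the holonomy group of $\bar g$ preserves the orthogonal decomposition $TM=\mathcal L\oplus\mathcal F$. Parallelism already forces integrability, and it also forces the leaves of each foliation to be totally geodesic. Indeed, for $X,Y\in\mathcal F$ we have $\nabla^{\bar g}_XY\in\mathcal F$, so the second fundamental form of an $\mathcal F$-leaf vanishes, and likewise for $\mathcal L$. Thus $(M,\bar g)$ is locally a Riemannian product. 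Concretely, every point has a neighbourhood isometric to $U_L\times U_F$ with $U_L\subset L$ and $U_F\subset F$ plaques of the two foliations. This is the standard local de Rham lemma: take coordinates adapted to both foliations, and use that the mixed part of the metric vanishes while each block is independent of the transverse variables, because Killing-type identities follow from parallelism.

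Next, use simple connectedness together with completeness of one distribution, say the leaves of $\mathcal L$, to globalize. The route I will follow is Ponge--Reckziegel's. A pair of complementary foliations that are both totally geodesic and orthogonal forms a \emph{twisted} (indeed direct) product locally. If one of them has complete leaves, the foliation is a fibre bundle-like structure in which parallel transport along $\mathcal L$-leaves moves $\mathcal F$-leaves isometrically onto each other. Using completeness of the $\mathcal L$-leaves, which are geodesically complete and totally geodesic, I will construct the map
\[
\Phi\colon L\times F\to M,\qquad \Phi(\gamma(s),x)=\text{endpoint of the }\mathcal L\text{-horizontal lift of }\gamma\text{ starting at }x .
\]
Here $L$ is the leaf through a base point $p$ and $F$ is the leaf through $p$. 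Completeness guarantees that the lifts exist for all time; this is the Ehresmann-connection condition of Blumenthal--Hebda. Parallelism shows $\Phi$ is a local isometry for the product metric $ds^2+h$.

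Then prove $\Phi$ is a covering map and conclude with simple connectedness of $M$. The covering property comes from completeness of lifts along $\mathcal L$, together with the local product structure, which gives evenly covered neighbourhoods. Because $M$ is simply connected, $\Phi$ is a diffeomorphism once $L\times F$ is connected, which it is since both leaves are. Hence $\Phi$ is an isometry onto $(M,\bar g)$. Finally, I will note that $L$ is one-dimensional in the intended application, so $(L,ds^2)$ is an interval or $\mathbb R$ or a circle; simple connectedness of the product excludes the circle.

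The main obstacle is the globalization step. Only \emph{one} distribution is assumed complete, so I cannot invoke the classical de Rham theorem, which needs completeness of $M$. I must check carefully that horizontal lifting along the complete foliation suffices to make $\Phi$ a covering, and that injectivity follows from simple connectedness. For this I would lean on~\cite[Cor.~2]{PR1993} and on Blumenthal--Hebda's theorem that a complete Ehresmann connection for a totally geodesic foliation yields a covering by a product.
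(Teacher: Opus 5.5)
Your proposal takes the same route as the paper: the paper gives no separate argument and deduces the statement directly from \cite[Cor.~2]{PR1993}, which is the reference your globalization step relies on. One point to tidy in your sketch is that the map defined by lifts should live on the universal covers $\widetilde L\times\widetilde F$ of the leaves, because a priori the endpoint of a lift depends on the chosen curves and not only on the pair $(\gamma(s),x)$. Once it is set up there, simple connectedness of $M$ makes the covering an isometry and shows a posteriori that the leaves are simply connected.
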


Our goal is to understand the geometry of the leaves of $\mathcal{D}$ and of the line field $\mathcal{L} = \mathrm{span}_\R \xi$.
We study both distributions with respect to the metric $\bar{g} = |V|^{-2} g$, or equivalently, with respect to the Levi-Civita connection $\nabla^{\bar{g}} = \nabla - \theta \otimes \Id$, where $\theta = \alpha \xi^\flat$.
Since $\mathcal{D}$ and $\mathcal{L}$ are $\nabla$-parallel and $\theta \in \mathcal{L}$, they are also $\nabla^{\bar{g}}$-parallel.
We also use~\cite[Cor.~1]{PR1993}.
For this, we need to verify that the leaves of $\mathcal{D}$ are spherical, which follows immediately from $\nabla^g _X \xi = \alpha X$ for all $X \in \mathcal{D}$ and $X(\alpha) = 0$ whenever $d\alpha \wedge \xi^\flat = 0$.

Apart from the simply-connectedness hypothesis and the completeness of one of the distributions, we are in the setting of the preceding theorem.
We now describe the geometry of $\Spin^c$-Riemannian manifolds carrying a generalized imaginary $\Spin^c$-Killing spinor with nowhere-vanishing Dirac current.
The remaining argument is organized around four steps:
\begin{enumerate}
    \item we study how $|V|$ varies along integral curves of $\xi$, in~\ref{lem:step-1};
    \item we show that, for $\dim M \geq 3$, completeness of $\xi$ and type~II are incompatible, in~\ref{lem:step-2};
    \item assuming $(M,g)$ is simply connected and either the leaves of $\mathcal{D}$ are complete or $\xi$ is complete, we describe the global geometry in the type~I and type~II cases, in~\ref{lem:step-3};
    \item finally, we analyze the deck transformation properties for the universal cover, in~\ref{lem:step-4}.
\end{enumerate}

\begin{lemma}\label{lem:step-1}
    Under hypothesis~\eqref{assumption:H1}, if $V$ never vanishes and $\gamma\colon I \to M$ is a maximal integral curve of $\xi$, then the function $u(t) = |V|(\gamma(t))$ satisfies:
    \begin{enumerate}
        \item if $\psi$ is of type~I, there exist constants $t_0$, $C \in \R$ such that
        \[
            u(t) = C e^{2\int_{t_0}^t \mu(s) ds}.
        \]
        \item if $\psi$ is of type~II, there exist constants $t_0$, $C \in \R$ such that
        \[
            u(t) = \sqrt{q_{\psi}}\,\sinh\!\left(2 \int_{t_0}^t \mu(s) ds + C\right).
        \]
    \end{enumerate}
\end{lemma}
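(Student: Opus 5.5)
We reduce the statement to an ODE for $u$ along $\gamma$, using the identities in \eqref{eq:conformal}. Since $\gamma$ is an integral curve of $\xi$, we have $u'(t) = \xi(|V|)(\gamma(t))$. In the proof of \ref{prop:principal-1} we showed $X(|V|) = \alpha V^\flat(X) = 2\mu |\psi|^2\, g(X,\xi)$. Hence
\[
    u'(t) = 2\mu(\gamma(t))\, |\psi|^2(\gamma(t)).
\]
Write $\mu(t)$ for $\mu(\gamma(t))$, as in the statement. The key input is the constancy of $q_\psi = |\psi|^4 - |V|^2$, which lets us express $|\psi|^2 = \sqrt{q_\psi + u^2}$. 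We therefore obtain the autonomous-in-$u$ equation
\[
    u' = 2\mu(t)\sqrt{q_\psi + u^2}.
\]
This equation holds on the whole maximal interval $I$, and $u>0$ there because $V$ never vanishes.

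\emph{Type~I.} Here $q_\psi = 0$, so the equation becomes $u' = 2\mu(t)\, u$. Since $u>0$, we may divide and get $(\ln u)' = 2\mu(t)$. Fix any $t_0\in I$ and integrate from $t_0$ to obtain
\[
    u(t) = u(t_0)\, e^{2\int_{t_0}^t \mu(s)\,ds}.
\]
This is the claimed form with $C = u(t_0) > 0$.

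\emph{Type~II.} Here $q_\psi>0$, and we substitute $u = \sqrt{q_\psi}\,\sinh w$ with $w = \operatorname{arcsinh}(u/\sqrt{q_\psi})$. This function $w$ is smooth on $I$ because $\operatorname{arcsinh}$ is smooth on all of $\mathbb R$. Using $\sqrt{q_\psi + u^2} = \sqrt{q_\psi}\cosh w$ and $\cosh w > 0$, the equation reduces to $w' = 2\mu(t)$. Integrating gives
\[
    w(t) = 2\int_{t_0}^t \mu(s)\,ds + C, \qquad C = w(t_0).
\]
This yields the stated formula.

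The only delicate point is justifying the identity $u' = 2\mu|\psi|^2$ pointwise. We take it from $X(|V|^2) = 4\mu|\psi|^2 V^\flat(X)$ in \eqref{eq:conformal}, dividing by $2|V|$, which is legitimate since $|V|\neq 0$. We also use $g(\xi,\xi)=1$ and $V^\flat(\xi) = |V|$. Once the positive square root for $|\psi|^2$ is chosen, no sign ambiguity remains, so I do not expect any genuine obstacle beyond this bookkeeping.
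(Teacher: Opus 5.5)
Your proposal is correct and follows the paper's proof essentially verbatim. You derive $u' = 2\mu|\psi|^2$ from \eqref{eq:conformal}, use the constancy of $q_\psi$ to write $|\psi|^2=\sqrt{u^2+q_\psi}$, and integrate the resulting ODE via $\ln u$ in type~I and via $\operatorname{arsinh}(u/\sqrt{q_\psi})$ in type~II. Your remark that $u>0$ on all of $I$ and that $\operatorname{arsinh}$ is smooth on $\R$ makes explicit that the type~II formula holds on the whole maximal interval, which the paper phrases more cautiously.
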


\begin{proof}
    Let $\gamma:I\to M$ be a maximal integral curve of $\xi$, with $|\dot{\gamma}(t)|=1$ for all $t\in I$, and set $u(t):=|V|(\gamma(t))$. 
    Since $\xi=\frac{V}{|V|}$ and $X(|V|)=2\mu \frac{|\psi|^2}{|V|}V^\flat(X)$, taking $X=\xi$ yields
    \[
        u'=\xi(|V|)=2\mu\,|\psi|^2.
    \]
    On the other hand, the quantity $q_{\psi}:=|\psi|^4-|V|^2$ is constant on $M$, hence along $\gamma$ we have $|\psi|^2=\sqrt{u^2+q}$.
    Therefore
    \[
        u'=2\mu\,\sqrt{u^2+q}.
    \]
    Assume first that $\psi$ is of type~I.
    Then $q_{\psi}=0$, so the previous equation reduces to $u'(t)=2\mu(t) \,u(t)$.
    It follows that there exist $t_0$, $C \in \R$ such that $u(t)= C e^{2\int_{t_0}^{t}\mu(s)ds}$.
    Assume now that $\psi$ is of type~II.
    Then $q_{\psi}>0$, and dividing by $\sqrt{u^2+q_{\psi}}$, we obtain
    \[
        \frac{d}{dt}\left(\mathrm{arsinh}\!\left(\frac{u(t)}{\sqrt{q_{\psi}}}\right)\right)=2\mu(t).
    \]
    Hence there exist two constants $t_0$, $C \in \R$ such that
    \[
        u(t) =\sqrt{q_{\psi}}\,\sinh\!\left(2 \int_{t_0}^{t}\mu (s) ds + C\right),
    \]
    for all $t$ in the interval of definition of $\gamma$ for which the right-hand side is positive.
\end{proof}

The differential equation for $|V|$ obtained in the previous lemma already suggests a strong restriction in the type~II case.
In dimension at least three, this becomes decisive once one knows that the Killing function is constant.

\begin{lemma}\label{lem:step-2}
    Under hypothesis~\eqref{assumption:H1}, if $V$ never vanishes, $\psi$ is of type~II, and $\dim M \geq 3$, then $\mu$ is constant and $\xi$ cannot be complete.
\end{lemma}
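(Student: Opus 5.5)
Two things must be shown: first that $\mu$ is constant, and then that a complete $\xi$ is impossible. For constancy of $\mu$, the natural route is the integrability condition of the Killing equation. Since $\nabla^{g,A}_X\psi=i\mu X\cdot\psi$, differentiating once more and antisymmetrizing gives the curvature identity
\[
\tfrac12 R^g_{XY}\cdot\psi+\tfrac{i}{2}\Omega^A(X,Y)\psi
= i\bigl(X(\mu)Y-Y(\mu)X\bigr)\cdot\psi+\mu^2(X\cdot Y-Y\cdot X)\cdot\psi .
\]
Contracting with Clifford multiplication, as in the proof of the Einstein condition, yields
\[
\Ric^g(X)\cdot\psi-i(X\lrcorner\Omega^A)\cdot\psi=-4(n-1)\mu^2X\cdot\psi+2(n-2)\,i\,X(\mu)\psi+2i\,(\grad\mu\cdot X\cdot\psi)\ \text{(up to normalisation)}.
\]
This is the standard route of Große--Nakad: in dimension $n\ge3$ the coefficient $(n-2)$ in front of $X(\mu)\psi$ is nonzero. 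Taking the real part of the inner product with $\psi$ isolates $d\mu$ against $V$ and $|\psi|^2$.

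I would combine this with the structure already obtained. By Proposition~\ref{prop:principal-1}, $d\alpha\wedge\xi^\flat=0$ with $\alpha=2\mu|\psi|^2/|V|$. Moreover, $|\psi|^2$ and $|V|$ are constant on the leaves of $\mathcal D$ by~\eqref{eq:conformal}. Hence $\mu$ is constant on the leaves, so $d\mu=\xi(\mu)\xi^\flat$. The task then reduces to killing $\xi(\mu)$.

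For this I would use the type~II spinors $\Phi^\pm$ of Proposition~\ref{prop:leaves}: the leaves carry real Killing spinors with Killing numbers $\pm\beta$, $\beta=\mu\sqrt{q_\psi}/|V|$. Since the leaf has dimension $n-1\ge2$, $\beta$ is constant on each leaf, which is consistent. To get information across leaves, I would insert $d\mu=\xi(\mu)\xi^\flat$ into the integrability identity above, take $X\in\mathcal D$, and pair with $\psi^\mp$. The mixed terms $i\,X(\mu)$ vanish, but the term $\grad\mu\cdot X\cdot\psi=\xi(\mu)\,\xi\cdot X\cdot\psi$ survives. Comparing components in $\Sigma^\pm$, and using that $\Ric^g$ and $\Omega^A$ are real-valued, I expect $\xi(\mu)(\rho_+^2+\rho_-^2)$ to be forced equal to $\xi(\mu)(\rho_+^2-\rho_-^2)$ up to sign. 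Since $\rho_-\neq0$ in type~II, this gives $\xi(\mu)=0$. This is the step I expect to be the main obstacle: extracting a clean scalar identity requires care with the $\Omega^A$ contribution, which is exactly where the $\Spin^c$ setting differs from the spin one. If the direct approach gets messy, I would instead invoke~\cite[Lem.~3.x]{GN2015}, which establishes constancy of $\mu$ for type~II in dimension $n\ge3$ locally, without completeness.

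Once $\mu$ is constant, Lemma~\ref{lem:step-1} gives, along a maximal integral curve $\gamma$ of $\xi$,
\[
u(t)=\sqrt{q_\psi}\,\sinh(2\mu t+C),
\]
valid wherever $u>0$, since $u=|V|\circ\gamma$ is positive and solves $u'=2\mu\sqrt{u^2+q_\psi}$. If $\xi$ were complete, $\gamma$ would be defined on all of $\mathbb R$. But the right-hand side vanishes at $t=-C/(2\mu)$, so $u$ would reach $0$ in finite time. Indeed $u$ is monotone with derivative at least $2|\mu|\sqrt{q_\psi}>0$ in absolute value, so it hits zero in finite time in one direction. This contradicts $V$ never vanishing, so $\xi$ cannot be complete. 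Note that $\mu\neq0$ somewhere, and it is constant, so $\mu\ne0$ everywhere.
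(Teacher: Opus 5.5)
Your reduction $d\mu=\xi(\mu)\,\xi^\flat$ is correct: it follows from the exactness of $\alpha\xi^\flat$ in \ref{prop:principal-1} together with the leafwise constancy of $|\psi|^2$ and $|V|$, and it neatly replaces the paper's appeal to \cite[Lem.~4.4]{GN2015}. Your incompleteness argument via the $\sinh$ formula is the paper's. The gap is the constancy of $\mu$ itself, which is the substance of the lemma, because the mechanism you describe does not yield $\xi(\mu)=0$. For $X\in\mathcal{D}$ the Ricci integrability identity reads, up to convention-dependent signs, $\Ric^g(X)\cdot\psi=-4(n-1)\mu^2X\cdot\psi+2i\,\xi(\mu)\,\xi\cdot X\cdot\psi+i(X\lrcorner\Omega^A)\cdot\psi$. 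Split it into $\Sigma^\pm$ and pair with $\psi^\mp$. Then $\xi(\mu)$ only ever multiplies $\mathrm{Re}\langle X\cdot\psi^+,\psi^-\rangle$, never $\rho_\pm^2$, and it is always balanced by a priori unknown curvature terms. For example, comparing the two pairings gives $4\,\xi(\mu)\,\mathrm{Re}\langle X\cdot\psi^+,\psi^-\rangle=\Omega^A(X,V)$, which says nothing about $\xi(\mu)$ while $\Omega^A$ is unknown. Also, $n\geq 3$ does not enter through the coefficient of $X(\mu)\psi$: that coefficient is $2(n-2)$ or $2n$ depending on whether you write $X\cdot\grad\mu$ or $\grad\mu\cdot X$.

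The missing ingredient is a second identity in which the same $\Omega^A$-term appears with a different weight. From $D\psi=-in\mu\psi$ and the Schr\"odinger--Lichnerowicz formula one gets $-i(n-1)\grad\mu\cdot\psi=\bigl(n(n-1)\mu^2+\tfrac{\kappa^g}{4}\bigr)\psi+\tfrac{i}{2}\Omega^A\cdot\psi$ (cf.\ \cite[Lem.~3.1(i)]{GN2015}). Clifford-multiplying by $X\in\mathcal{D}$ and pairing with $\psi$ gives $4(n-1)\,\xi(\mu)\,\mathrm{Re}\langle X\cdot\psi^+,\psi^-\rangle=\Omega^A(X,V)$. Comparing with the Ricci relation, $n\geq3$ forces $\Omega^A(X,V)=0$ for all $X\in\mathcal{D}$, i.e.\ $\xi\lrcorner\Omega^A=0$. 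Only then does the $X=\xi$ component of the Ricci identity close. Since $\Ric^g(X,V)=-(n-1)X(2\mu|\psi|^2)$ vanishes for $X\in\mathcal{D}$, $\Ric^g(\xi)$ is proportional to $\xi$, and that component becomes $\bigl(\Ric^g(\xi,\xi)+4(n-1)\mu^2\bigr)\xi\cdot\psi=2i(n-1)\,\xi(\mu)\,\psi$. Pairing with $\psi$ and comparing norms gives $\xi(\mu)\bigl(|\psi|^2-|V|\bigr)=0$, so $\xi(\mu)=0$ in type~II. This is exactly the relation you anticipated, but it comes from the $\xi$-direction after $\Omega^A$ has been eliminated, as in the paper's adaptation of \cite[Prop.~4.5]{GN2015}. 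Your fallback citation does not close the gap as stated. The constancy result of \cite{GN2015} is proved under completeness, so you must check, as the paper does, that completeness is used there only to produce a point where $\langle d\mu\cdot\psi,\psi\rangle\neq0$; here such a point comes from $V\neq0$ and $d\mu\parallel\xi^\flat$.
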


\begin{proof}
    We first show that $\mu$ is constant and then use this to conclude that $\xi$ cannot be complete.
    For the first part, the argument is the same as in~\cite[Lem.~4.4, Prop.~4.5]{GN2015}. 
    Suppose, by contradiction, that $d\mu \neq 0$ at some point of $M$.
    Then there exists an open neighborhood $U \subset M$ on which $d\mu$ does not vanish.
    By~\cite[Lem.~4.4]{GN2015}, we have $\langle X \cdot \psi , \psi \rangle = 0$ for every vector field $X$ orthogonal to $\grad \mu$.
    Hence, on $U$, the Dirac current $V$ is pointwise parallel to $\grad \mu$.
    After possibly shrinking $U$, we may consider an orthonormal frame $\{e_1, \ldots, e_n = \frac{\grad \mu}{|\grad \mu|}\}$ on $U$.

    We now use the pointwise part of the proof of~\cite[Prop.~4.5]{GN2015}.
    Notice that no completeness is involved in the following calculation.
    The global completeness assumption in~\cite[Prop.~4.5]{GN2015} is used only to ensure the existence of a point at which $\langle d\mu \cdot \psi, \psi \rangle$ is nonzero; here this follows directly from $V\neq 0$.

    For $i=1,\ldots,n-1$, we apply~\cite[Lem.~3.1(i)]{GN2015}, Clifford-multiply it by $e_i$, and take the scalar product with $\psi$.
    Since $\langle e_i\cdot\psi,\psi\rangle=0$, we obtain 
    \begin{equation}\label{eq:GN-Lich-local}
        i(n-1)\langle e_i\cdot d\mu\cdot\psi,\psi\rangle = \frac{i}{2}\, \Omega^A(e_i,e_n)\, \langle e_n\cdot\psi,\psi\rangle.
    \end{equation}
    On the other hand, we apply the identity in~\cite[Lem.~3.1(iii)]{GN2015} with $X=e_i$, take the scalar product with $\psi$, and use again $\langle e_j\cdot\psi,\psi\rangle=0$ for $j=1,\ldots,n-1$, to obtain 
    \begin{equation}\label{eq:GN-Ricci-local}
        \frac12\Ric^g(e_i,e_n)\langle e_n\cdot\psi,\psi\rangle - \frac{i}{2}\Omega^A(e_i,e_n)\langle e_n\cdot\psi,\psi\rangle = i\langle d\mu\cdot e_i\cdot\psi,\psi\rangle.
    \end{equation}
    Taking the imaginary part of~\eqref{eq:GN-Ricci-local} yields $\Ric^g(e_i,e_n)\langle e_n\cdot\psi,\psi\rangle=0$ and since $\langle e_n\cdot\psi,\psi\rangle\neq0$, we have $\Ric^g (e_i,e_n)=0$.
    Taking the real part of~\eqref{eq:GN-Ricci-local}, we obtain  
    \begin{equation}\label{eq:GN-real-local}
        -\frac{i}{2}\Omega^A(e_i,e_n)\langle e_n\cdot\psi,\psi\rangle = i\langle d\mu\cdot e_i\cdot\psi,\psi\rangle .
    \end{equation}
    Finally, combining~\eqref{eq:GN-Lich-local} and~\eqref{eq:GN-real-local}, we get
    \[
        (n-1)\langle e_i\cdot d\mu\cdot\psi,\psi\rangle = \langle e_i\cdot d\mu\cdot\psi,\psi\rangle.
    \]
    As $n\geq3$, this implies $\langle e_i\cdot d\mu\cdot\psi,\psi\rangle=0$ and, by~\eqref{eq:GN-Lich-local}, $e_n \lrcorner \Omega^A = 0$. 

    We now use~\cite[Lem.~3.1(iii)]{GN2015} for $X = e_n$ and obtain that
    \[
        \left(\frac{1}{2}\Ric^g(e_n,e_n)+2(n-1)\mu^2\right)e_n\cdot\psi = i(n-1)|d\mu|\,\psi .
    \]
    In other words, $e_n \cdot \psi$ is pointwise proportional to $\psi$.
    Thus, the Cauchy--Schwarz inequality becomes an equality, and therefore, $\psi$ must be of type~I in contradiction to our assumption. This proves that $\mu$ is constant.

    Finally, since $\mu$ is constant and, by~\ref{lem:step-1}, $u(t)=\sqrt{q_{\psi}}\,\sinh(2\mu t+C)$, it follows that this function cannot be positive for all $t\in\R$.
    Hence, the vector field $\xi$ cannot be complete.
\end{proof}

We are now in a position to combine the conformal splitting with the leafwise classification from~\ref{prop:leaves}, thereby obtaining the simply connected global models.

\begin{lemma}\label{lem:step-3}
    Under hypothesis~\eqref{assumption:H1}, suppose that $V$ never vanishes, $(M,g)$ is simply connected, and either $\xi$ is complete or the leaves of $\mathcal D$ are complete.
    Then:
    \begin{enumerate}
        \item If $\psi$ is of type~I, there exists a connected open interval $J\subset \R$ and a Riemannian manifold $(F,h)$ admitting a non-trivial parallel $\Spin^c$-spinor such that, for some $t_0\in J$,
        \[
            (M,g) \cong \bigl(J\times F,\ dt^2+e^{4\int_{t_0}^t \mu(s)\,ds}\,h\bigr).
        \]

        \item If $\psi$ is of type~II and $\dim M\geq 3$, then $\mu$ is constant, $\xi$ is not complete, and there exist $t_m \in (0, \infty]$ and a Riemannian manifold $(F,h)$ admitting two non-trivial real $\Spin^c$-Killing spinors with opposite Killing numbers $\pm \mu$ such that
        \[
            (M,g)\cong \bigl((0,t_m)\times F,\ dt^2+\sinh^2(2\mu t)\,h\bigr).
        \]
    \end{enumerate}
\end{lemma}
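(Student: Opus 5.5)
We will work with the conformally related metric $\bar g=|V|^{-2}g$ on $M$. By \ref{prop:principal-1} and the discussion after \ref{thm:splitting}, $\nabla^{\bar g}=\nabla-\theta\otimes\Id$ with $\theta=\alpha\xi^\flat=d(\ln|V|)$. Both $\mathcal L=\operatorname{span}_\R\xi$ and $\mathcal D$ are complementary, orthogonal, integrable and $\nabla^{\bar g}$-parallel.

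The first step is to check completeness for $\bar g$. If $\xi$ is $g$-complete, we reparametrize its integral curves by $\bar g$-arclength, $ds=dt/|V|$, and use \ref{lem:step-1}. In type~I, $|V|=Ce^{2\int\mu}$, and $\int_\R dt/u(t)$ diverges on both ends provided $\int_{t_0}^{t}\mu$ stays controlled. This is the delicate point, and I expect it to be the main obstacle. Rather than fight completeness of $\mathcal L$ for $\bar g$, I would try to use completeness of the $\mathcal D$-leaves instead. Since $|V|$ is constant on each $\mathcal D$-leaf (because $d|V|\wedge\xi^\flat=0$), a $g$-complete leaf is $\bar g$-complete, being only rescaled by a constant. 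When instead $\xi$ is complete, I would appeal to \cite[Cor.~1]{PR1993} with the spherical leaves of $\mathcal D$: completeness of the geodesic flow of $\xi$ should directly give a global warped decomposition $M\cong J\times F$ by flowing a fixed leaf $F$ along $\xi$. Here $\xi$ is geodesic and the leaves are spherical with mean curvature $\alpha$ constant on leaves. Simple connectedness ensures that every $\xi$-orbit meets $F$ exactly once. In either case we obtain, via \ref{thm:splitting} or \cite[Cor.~1]{PR1993}, an isometry $(M,\bar g)\cong(L\times F,ds^2+h)$ with $L\subset\R$ an interval.

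Next we pull back to $g$. We write $g=|V|^2\bar g$, where $|V|$ depends only on the $L$-coordinate. Changing variable to $g$-arclength $t$ along $\xi$, we get $g=dt^2+u(t)^2h$ with $u$ given by \ref{lem:step-1}. In type~I, $u^2=C^2e^{4\int_{t_0}^t\mu}$; absorbing $C^2$ into $h$ gives the stated form on an interval $J$. In type~II, \ref{lem:step-2} gives $\mu$ constant and $\xi$ incomplete. We then have $u=\sqrt{q_\psi}\sinh(2\mu t+C)$, so we shift $t$ to kill $C$. Positivity of $u$ forces $t\in(0,t_m)$, and we absorb $q_\psi$ into $h$. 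The lower endpoint is exactly $0$ because the maximal integral curves end where $u\to0$, which we must argue using the fact that $V$ does not vanish on $M$ while the maximal curve cannot be extended. The upper endpoint is some $t_m\le\infty$.

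Finally, we identify the spinors on $F$ using \ref{prop:leaves}. For type~I, the leaf $F$ carries a parallel $\Spin^c$-spinor for the metric induced by $g$, and a constant rescaling to $h$ preserves parallelism. For type~II, the leaf $\{t\}\times F$ with induced metric $\sinh^2(2\mu t)\,q_\psi h$ carries Killing spinors with numbers $\pm\mu\sqrt{q_\psi}/|V|=\pm\mu\sqrt{q_\psi}/u$. Rescaling the metric by the constant factor $u^{-2}$ multiplies the Killing number by $u$ (after the identification of spinor bundles under homothety). This gives Killing numbers $\pm\mu\sqrt{q_\psi}$ for the metric $q_\psi h$ scaled appropriately, and hence $\pm\mu$ for $h$ once $h$ is normalized as above.
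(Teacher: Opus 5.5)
Your route is the paper's. You pass to $\bar g=|V|^{-2}g$ and note that $g$-complete $\mathcal D$-leaves stay complete because $|V|$ is leafwise constant. You split via \ref{thm:splitting}, or use \cite[Cor.~1]{PR1993} directly when $\xi$ is complete; this avoids the $\bar g$-completeness of $\mathcal L$ exactly as the paper does. You then reparametrize by $g$-arclength to get $dt^2+u(t)^2h$, read off $u$ from \ref{lem:step-1}, and move the leafwise spinors of \ref{prop:leaves} to $(F,h)$ by a constant rescaling. Your unpacking of the $\xi$-complete case is correct: on simply connected $M$ write $\xi^\flat=d\tau$, so the flow of $\xi$ identifies $M$ with $\R\times\tau^{-1}(c)$. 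Your scaling bookkeeping is a bit garbled but lands on the right normalization: $\pm\mu\sqrt{q_\psi}$ for the rescaled leaf metric, and $\pm\mu$ after absorbing $q_\psi$.

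The step that fails is the one you flag in type~II. Positivity of $u=\sqrt{q_\psi}\sinh(2\mu t)$ only gives $J\subset(0,\infty)$. Nonvanishing of $V$ on $M$ cannot force the lower end of $J$ to be $0$, because a maximal integral curve of $\xi$ can simply leave $M$ at finite time with $u$ bounded away from $0$.

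Here is a concrete example. On $\mathbb{H}^4$ of curvature $-4\mu^2$ (with $\mu>0$), take the imaginary Killing spinor whose value at $p$ is a half-spinor. Then $V(p)=0$, $q_\psi>0$, and $|V|=\sqrt{q_\psi}\sinh(2\mu r)$ with $r=d(p,\cdot)$. The annulus $\{1<r<2\}$ is simply connected, $V$ has no zeros there, and the $\mathcal D$-leaves are round $3$-spheres, hence complete. Your construction gives $J=(1,2)$, and no translation of $t$ moves its lower end to $0$. In fact the annulus is not isometric to any model $((0,t_m)\times F,\ dt^2+\sinh^2(2\mu t)h)$: in such a model the slices $\{t\}\times F$ would be homologically nontrivial $3$-cycles with volume tending to $0$, while in the annulus such cycles have volume bounded below by a calibration argument.

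The paper's proof has the same hole: it passes from ``its maximal interval of positivity'' to $J=(0,t_m)$ without argument. So this is not a deviation from the paper, but the claim cannot be proved as stated. What the argument actually yields is $(M,g)\cong\bigl((a,t_m)\times F,\ dt^2+\sinh^2(2\mu t)h\bigr)$ with $0\le a<t_m$. Since $u$ is monotone, $a=0$ exactly when $\inf_M|V|=0$. That condition, or an equivalent maximality hypothesis, is what must be added to get lower endpoint $0$.
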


\begin{proof}
    Suppose $(M,g)$ is simply connected and either the leaves of $\mathcal{D}$ are complete or $\xi$ is complete.
    If the leaves of $\mathcal{D}$ are complete with respect to $g$, then they are complete with respect to $\bar{g}=|V|^{-2} g $, because $|V|$ is constant along the leaves. 
    We apply~\ref{thm:splitting} to $\bar{g} = |V|^{-2} g$ and obtain that $(M,\bar{g})$ is isometric to a Riemannian product $(L,ds^2) \times (F,h)$, where $(L,ds^2)$ and $(F,h)$ are integral submanifolds of the foliations $\mathcal{L}$ and $\mathcal{D}$, respectively.
    Since the expressions obtained in~\ref{lem:step-1} depend only on the parameter along integral curves of $\xi$, the function $u:=|V|$ is constant along the leaves of $\mathcal{D}$.
    Therefore, under the product identification
    \[
        (M,\bar{g})\cong (L,ds^2)\times(F,h),
    \]
    there exists a positive smooth function, still denoted by $u$, defined on the one-dimensional factor $L$ such that
    \begin{equation}\label{eq:g-conformal-product}
        g=u(s)^2\bigl(ds^2+h\bigr).
    \end{equation}
    Since $L$ is one-dimensional and simply connected, we can identify it with an open interval $I\subset\mathbb R$.
    If we now introduce the new parameter $t$ by $dt=u(s)\,ds$, then $t\colon I\to J$ is a diffeomorphism onto an open interval $J\subset\mathbb R$, and~\eqref{eq:g-conformal-product} becomes
    \begin{equation}\label{eq:warped-global}
        (M,g)\cong \bigl(J\times F,\ dt^2+u(t)^2 h\bigr).
    \end{equation}
    Moreover, under this identification we have $\xi=\frac{\partial}{\partial t}$, $V=u(t)\,\partial_t$, and the leaves of $\mathcal D$ are exactly the slices $\{t\}\times F$.

    Analogously, if $\xi$ is complete, then by~\cite[Cor.~1]{PR1993} $(M,g)$ is isometric again to a warped product of the form~\eqref{eq:warped-global}, where $J=\mathbb R$ and $\xi=\frac{\partial}{\partial t}$.
    In this case, the leaves of $\mathcal D$ are again the slices $\{t\}\times F$.

    We now distinguish the two possible types.

    \noindent\textbf{Type~I.}
    Assume that $\psi$ is of type~I.
    Then $q_{\psi}=0$.
    By~\ref{prop:leaves}, each leaf of $\mathcal D$ endowed with the induced metric admits a non-trivial parallel $\Spin^c$-spinor.
    Since the induced metric on the slice $\{t\}\times F$ is $u(t)^2h$, and a constant rescaling preserves the existence of parallel $\Spin^c$-spinors, it follows that $(F,h)$ itself admits a non-trivial parallel $\Spin^c$-spinor.
    On the other hand, \ref{lem:step-1} yields constants $t_0$, $C\in \R$ such that $u(t)= C e^{2\int_{t_0}^{t}\mu(s)ds}$.
    Hence, absorbing $C^2$ into $h$, the metric is globally of the form
    \begin{equation}\label{eq:typeI-warped}
        g=dt^2+ e^{4\int_{t_0}^{t}\mu(s)ds}h,
    \end{equation}
    where $(F,h)$ carries a non-trivial parallel $\Spin^c$-spinor and $u$ satisfies the above first-order equation.

    \noindent\textbf{Type~II.}
    Assume now that $\psi$ is of type~II and $\dim M \geq 3$.
    Then $q_{\psi}>0$.
    By~\ref{lem:step-2}, $\mu$ is constant.
    Furthermore, for each $t\in J$, the slice $\{t\}\times F$ endowed with the induced metric $u(t)^2h$ admits two non-trivial real $\Spin^c$-Killing spinors with opposite Killing numbers $\pm \mu \frac{\sqrt{q_{\psi}}}{u(t)}$.
    Since, under the constant rescaling $u(t)^2h$ the Killing number is multiplied by $u(t)$, we conclude that $(F,h)$ admits two non-trivial real $\Spin^c$-Killing spinors with opposite Killing numbers $\pm \mu\sqrt{q_{\psi}}$.
    On the other hand, \ref{lem:step-1} yields
    \[
        u(t)=\sqrt{q_{\psi}}\,\sinh(2\mu t+C)
    \]
    on its maximal interval of positivity.
    Hence, after translating the parameter $t$ and changing its orientation if $\mu < 0$, we can assume that there exists $t_m \in (0,\infty]$ such that
    \[
        J=(0,t_m),\qquad u(t)=\sqrt{q_{\psi}}\,\sinh(2|\mu| t).
    \]
    Therefore, absorbing $q_{\psi}$ into $h$, the metric is globally of the form
    \begin{equation}\label{eq:typeII-warped}
        (M,g)\cong \bigl((0,t_m)\times F,\ dt^2+\sinh^2(2\mu t)\,h\bigr),
    \end{equation}
    where $(F,h)$ carries two non-trivial real $\Spin^c$-Killing spinors with opposite Killing numbers $\pm\mu$.
\end{proof}

Having obtained the simply connected models, it remains to understand how these models relate to the original manifold.
Let $\pi\colon(\widetilde M,\widetilde g)\to(M,g)$ be the universal Riemannian covering, and let $\Gamma$ be its deck transformation group.
We denote by a tilde the lift of the corresponding objects to $\widetilde M$. 
These lifted tensors, functions and vector fields are $\Gamma$-invariant.

\begin{lemma}\label{lem:step-4}
    Under the hypothesis~\eqref{assumption:H1}, assume that $V$ never vanishes and that either $\xi$ is complete or the leaves of $\mathcal D$ are complete.
    Applying~\ref{lem:step-3} to the lifted data on the universal cover $(\widetilde M,\widetilde g)$, we obtain a warped-product decomposition $\widetilde M \cong J \times \widetilde F$.
    Then, for every $\gamma\in\Gamma$, there exist a real number $a_\gamma$ and an isometry $\varphi_\gamma$ of $(\widetilde F,\widetilde h)$ such that $\gamma (t,x) = (t+a_\gamma,\varphi_\gamma(x))$ with respect to the above decomposition.
\end{lemma}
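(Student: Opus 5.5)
Each deck transformation $\gamma\in\Gamma$ is an isometry of $(\widetilde M,\widetilde g)$ that preserves every lifted object: $\gamma_*\widetilde V=\widetilde V$, hence $\gamma_*\widetilde\xi=\widetilde\xi$, $\gamma$ preserves the foliation $\widetilde{\mathcal D}$, and $|\widetilde V|\circ\gamma=|\widetilde V|$. The aim is to read off the form of $\gamma$ in the coordinates $(t,x)$ of the decomposition $\widetilde M\cong J\times\widetilde F$ from \ref{lem:step-3}, in which $\widetilde\xi=\partial_t$, the leaves of $\widetilde{\mathcal D}$ are the slices $\{t\}\times\widetilde F$, and $\widetilde g=dt^2+u(t)^2\widetilde h$ with $u=|\widetilde V|$.

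\textbf{Step 1: $\gamma$ maps slices to slices by a translation in $t$.} Since $\gamma$ preserves $\widetilde{\mathcal D}$, it permutes the slices, so $\gamma(t,x)=(\tau(t,x),\varphi(t,x))$ with $\tau$ independent of $x$ (each connected slice goes to a single slice). Write $\tau=\tau(t)$. The $t$-coordinate is an arclength parameter along the integral curves of $\widetilde\xi$, and $d t=\widetilde\xi^\flat$. Because $\gamma$ is an isometry with $\gamma_*\widetilde\xi=\widetilde\xi$, we get $\gamma^*\widetilde\xi^\flat=\widetilde\xi^\flat$, that is, $d(t\circ\gamma)=dt$. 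So $\tau(t)=t+a_\gamma$ for a constant $a_\gamma$, using connectedness of $\widetilde M$. In the complete case $J=\mathbb R$ and this is automatic. In the incomplete case $J$ is an interval and $\tau$ must map $J$ onto $J$; that is consistent only with $a_\gamma=0$ when $J$ is bounded on one side, but the statement only asks for existence of $a_\gamma$, so no further analysis is needed.

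\textbf{Step 2: the $\widetilde F$-component is independent of $t$.} Since $\gamma_*\partial_t=\partial_t$, the map $\gamma$ commutes with the flow of $\partial_t$ wherever that flow is defined. The flow is $\Phi_s(t,x)=(t+s,x)$. Hence $\varphi(t+s,x)=\varphi(t,x)$ for all $s$ with $t,t+s\in J$, so $\varphi(t,x)=\varphi_\gamma(x)$. Alternatively, $d\gamma(\partial_t)=\partial_t$ forces $\partial_t\varphi=0$ directly, and $J$ is connected.

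\textbf{Step 3: $\varphi_\gamma$ is an isometry of $\widetilde h$.} Restrict $\gamma$ to a slice: $\{t\}\times\widetilde F\to\{t+a_\gamma\}\times\widetilde F$, with induced metrics $u(t)^2\widetilde h$ and $u(t+a_\gamma)^2\widetilde h$. Since $\gamma$ is an isometry, $\varphi_\gamma^*\widetilde h=\bigl(u(t)/u(t+a_\gamma)\bigr)^2\widetilde h$. The invariance $|\widetilde V|\circ\gamma=|\widetilde V|$ gives $u(t+a_\gamma)=u(t)$, so $\varphi_\gamma^*\widetilde h=\widetilde h$. This ratio is also a $t$-independent constant, consistent with Step 2.

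The main obstacle I expect is Step 1 in the case where only the leaves of $\mathcal D$ are complete. There the identification with $J\times\widetilde F$ comes from \ref{thm:splitting} via $\bar g$. I must check that $\gamma$ really sends slices to slices globally, rather than merely preserving the foliation. This follows because the slices are exactly the leaves, and the leaves are the level sets of $t$, a global coordinate on $\widetilde M$.
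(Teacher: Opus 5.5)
Your proof is correct. It uses the same inputs as the paper: deck transformations are $\widetilde g$-isometries that fix $\widetilde\xi$, preserve the foliation $\widetilde{\mathcal D}$, and preserve $|\widetilde V|$. The mechanics, however, differ.

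The paper's route is as follows. Because $|\widetilde V|$ is $\Gamma$-invariant, $\Gamma$ also acts by isometries of the conformal product metric $\widetilde{\bar g}=|\widetilde V|^{-2}\widetilde g$. Since $\Gamma$ preserves both factor distributions, each $\gamma$ splits as a product of isometries of the factors, i.e.\ it has the form $(\pm t+a_\gamma,\varphi_\gamma(x))$. The reflection is then excluded by the invariance of $\widetilde\xi$.

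You work directly in the warped coordinates instead:
\begin{itemize}
\item $\gamma^*\widetilde\xi^\flat=\widetilde\xi^\flat=dt$ gives the translation in $t$ at once, so the reflection alternative never arises.
\item $\gamma_*\partial_t=\partial_t$ kills the $t$-dependence of the $\widetilde F$-component.
\item Comparing the induced metrics $u(t)^2\widetilde h$ and $u(t+a_\gamma)^2\widetilde h$ on slices, together with $u\circ\gamma=u$, yields $\varphi_\gamma^*\widetilde h=\widetilde h$.
\end{itemize}

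The paper's argument is shorter, and it gets the isometry property of $\varphi_\gamma$ for free from the product structure. However, that product structure lives in the $\widetilde{\bar g}$-arclength coordinate $s$, with $dt=u\,ds$. Passing from a translation in $s$ to a translation in $t$ therefore tacitly uses $u(s+b)=u(s)$. Your argument avoids this step because it is phrased in the coordinate $t$ of the statement.

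One small detail should be added. You should note that $\varphi_\gamma$ is bijective: applying your argument to $\gamma^{-1}$ gives $\varphi_{\gamma^{-1}}=\varphi_\gamma^{-1}$. Only then is the local isometry you obtain a genuine isometry of $(\widetilde F,\widetilde h)$.
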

\begin{proof}
    The group $\Gamma$ acts by isometries on $(\widetilde M, \widetilde g)$ and on $\widetilde{\bar{g}} = |\widetilde{V}|^{-2} \widetilde g$.
    Since $\widetilde{\bar{g}}$ is a product metric and $\widetilde{\xi}$ is $\Gamma$-invariant, the action of $\Gamma$ preserves the line distribution generated by $\widetilde{\xi}$ and its orthogonal distribution. Hence it preserves the product structure.
    Therefore, every $\gamma\in\Gamma$ has one of the following two forms
    \[
        \gamma(t,x)=\bigl(t+a_\gamma,\varphi_\gamma(x)\bigr), \quad\text{or}\quad \gamma(t,x)=\bigl(-t+a_\gamma,\varphi_\gamma(x)\bigr),
    \]
    where $a_\gamma\in\mathbb R$ and $\varphi_\gamma$ is an isometry of $(\widetilde F,\widetilde h)$.
    Since $\widetilde{\xi}$ is invariant under the action of $\Gamma$, the second is impossible.
    Therefore, $\widetilde u(t+a_\gamma)=\widetilde u(t)$ for all $t\in J$ for which both sides are defined.
    If $\widetilde \xi$ is not complete, then $J \neq \mathbb{R}$. Since each deck transformation acts on $J$ by $t\mapsto t+a_\gamma$, it must satisfy $J + a_\gamma= J$, and therefore $a_\gamma=0$ for all $\gamma\in\Gamma$.
    In this case, $\Gamma$ acts trivially on the factor $J$ and by isometries on the factor $(\widetilde F,\widetilde h)$. 
\end{proof}

As a consequence of the previous discussion, we now obtain the final global descriptions in each of the two possible cases.

\begin{theorem}\label{thm:typeI-global}
    Let $(M^n,g)$ be a $\Spin^c$-Riemannian manifold admitting a generalized imaginary $\Spin^c$-Killing spinor $\psi$ of type~I, with nowhere-vanishing Dirac current $V$ and Killing function $i\mu$.
    Assume that the leaves of $\mathcal D=\ker(\xi^\flat)$ are complete or $\xi$ is complete.
    Then one of the following two possibilities holds: 
    \begin{itemize}
        \item $(M,g)$ is isometric to a warped product of the form
        \[
            (L\times F, dt^2 + e^{4\int_{t_0}^{t}\mu(s)\,ds}\,h),
        \]
        where $(L,dt^2)$ is a $1$-dimensional manifold, and $(F,h)$ admits a non-trivial parallel $\Spin^c$-spinor.
        \item the universal cover $(\widetilde M, \widetilde g)$ of $(M,g)$ is isometric to a warped product of the form
        \[
            (\R \times \widetilde F,\ dt^2+ e^{4\int_{t_0}^{t}\widetilde{\mu}(s)\,ds}\,\widetilde h),
        \]
        where $(\widetilde F,\widetilde h)$ admits a non-trivial parallel $\Spin^c$-spinor, and $\widetilde{\mu}$ is periodic with period $\tau > 0$ and satisfies $ \int_0^\tau \widetilde{\mu}(s)\,ds=0$.
        Furthermore, the deck transformation group of the universal cover acts by isometries of the form $\gamma(t,x)=\bigl(t+n_\gamma \tau,\varphi_\gamma(x)\bigr)$, with $n_\gamma\in\mathbb Z$ and $\varphi_\gamma$ an isometry of $(\widetilde F,\widetilde h)$.
    \end{itemize}
\end{theorem}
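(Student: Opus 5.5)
The plan is to pass to the universal cover, apply \ref{lem:step-3} and \ref{lem:step-4} there, and then split according to how the deck group $\Gamma$ translates the $t$-factor.

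\textbf{Setup on the universal cover.} The lifted spinor $\widetilde\psi$ is again a type~I generalized imaginary $\Spin^c$-Killing spinor for the pulled-back structure. Its Dirac current $\widetilde V$ never vanishes, and the completeness of $\xi$ (resp. of the leaves of $\mathcal D$) lifts to $\widetilde\xi$ (resp. to the leaves of $\widetilde{\mathcal D}$). So \ref{lem:step-3}(1) gives
\[
(\widetilde M,\widetilde g)\cong\bigl(J\times\widetilde F,\ dt^2+e^{4\int_{t_0}^t\widetilde\mu(s)ds}\,\widetilde h\bigr),
\]
where $(\widetilde F,\widetilde h)$ carries a parallel $\Spin^c$-spinor, and $\widetilde\mu$, $\widetilde u=|\widetilde V|$ depend only on $t$ (by \ref{prop:principal-1} and \ref{lem:step-1}). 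By \ref{lem:step-4}, every $\gamma\in\Gamma$ acts as $\gamma(t,x)=(t+a_\gamma,\varphi_\gamma(x))$. The map $\gamma\mapsto a_\gamma$ is a homomorphism $\Gamma\to(\R,+)$, and $\Gamma$-invariance of $\widetilde\mu$ and $\widetilde u$ gives $\widetilde\mu(t+a_\gamma)=\widetilde\mu(t)$ and $\widetilde u(t+a_\gamma)=\widetilde u(t)$.

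\textbf{Case 1: $a_\gamma=0$ for all $\gamma$.} This holds automatically when $J\neq\R$, by the proof of \ref{lem:step-4}. Then $\Gamma$ acts only on the second factor, by isometries $\varphi_\gamma$ of $\widetilde h$. The action on $\widetilde F$ is free and properly discontinuous because the action on $J\times\widetilde F$ is. Hence $M\cong J\times(\widetilde F/\Gamma)$ with the induced warped metric, which is the first alternative with $L=J$ and $F=\widetilde F/\Gamma$. It remains to check that $F$ carries a nontrivial parallel $\Spin^c$-spinor. I would get this directly from \ref{prop:leaves} applied on $M$ itself: the slices $\{t\}\times F$ are the leaves of $\mathcal D$ in $M$, so each carries one, and a constant rescaling of the metric transfers it to $(F,h)$.

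\textbf{Case 2: some $a_\gamma\neq0$.} Then $J=\R$. The subgroup $\{a_\gamma\}\subset\R$ is either dense or cyclic.

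If it is dense, the continuous function $\widetilde\mu$ is constant; I expect this is the main obstacle. A nonzero constant $\mu$ would make $\widetilde u=Ce^{2\mu t}$ non-periodic, contradicting $\widetilde u(t+a_\gamma)=\widetilde u(t)$, and $\mu\not\equiv0$ is assumed. So the dense case is impossible.

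If it is cyclic, generated by $\tau>0$, then $a_\gamma=n_\gamma\tau$ and $\widetilde\mu$ is $\tau$-periodic. Evaluating $\widetilde u(t+\tau)=\widetilde u(t)$ with $\widetilde u(t)=Ce^{2\int_{t_0}^t\widetilde\mu}$ gives $\int_0^\tau\widetilde\mu=0$. This is exactly the second alternative, with $\varphi_\gamma$ isometries of $\widetilde h$ as provided by \ref{lem:step-4}.

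A subtle point in Case 2 is that $\varphi_\gamma$ must be an isometry of $\widetilde h$ rather than only conformal. This follows because $\gamma$ is a $\widetilde g$-isometry and $e^{4\int\widetilde\mu}$ is $\tau$-periodic, so the warping factor matches at $t$ and $t+a_\gamma$.
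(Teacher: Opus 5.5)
Your proof is correct and follows essentially the same route as the paper: you apply \ref{lem:step-3} and \ref{lem:step-4} on the universal cover, split according to whether all translation parts $a_\gamma$ vanish, and use $\tau$-periodicity of $\widetilde u$ to get $\int_0^\tau\widetilde\mu=0$. Two of your details are more careful than the paper's. The paper deduces discreteness of $\{a_\gamma\}$ from proper discontinuity alone, which does not suffice in general: a lattice in $\mathbb{R}^2$ with irrational projection onto the first factor acts properly discontinuously with dense translation parts. Your dense/cyclic dichotomy together with $\mu\not\equiv 0$ closes this step. You also check explicitly, via \ref{prop:leaves} on $M$, that $F=\widetilde F/\Gamma$ carries a parallel $\Spin^c$-spinor, which the paper leaves implicit.
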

\begin{proof}
    In~\ref{lem:step-4}, we showed that every $\gamma\in\Gamma$ has the form
    \[
        \gamma(t,x)=\bigl(t+a_\gamma,\varphi_\gamma(x)\bigr),
    \]
    where $a_\gamma\in\mathbb R$ and $\varphi_\gamma$ is an isometry of $(\widetilde F,\widetilde h)$.
    Thus, if $a_\gamma = 0$ for every $\gamma\in\Gamma$, then $\Gamma$ acts trivially on the factor $J$ and by isometries on the factor $(\widetilde F,\widetilde h)$.
    Hence, passing to the quotient, we obtain that $(M,g)$ itself is a warped product of the form~\eqref{eq:typeI-warped}, with $F = \widetilde F / \Gamma$. This is the first possibility.
    By~\ref{lem:step-4}, if $\xi$ is not complete, or equivalently if $\widetilde \xi$ is not complete, then $a_\gamma=0$ for every $\gamma\in\Gamma$. Hence, in this case, we are always in the first possibility.

    Suppose now that there exists $\gamma\in\Gamma$ such that $a_\gamma\neq 0$.
    Since $\Gamma$ acts properly discontinuously, the set $A=\{a_\gamma:\gamma\in\Gamma\}$ is a discrete subgroup of $\mathbb R$.
    Hence, there exists $\tau>0$ such that $A=\tau \mathbb Z$.
    Without loss of generality, we denote by $\gamma$ the deck transformation such that $a_\gamma = \tau$.
    We also note that $\widetilde{u}(t)$ is $\tau$-periodic.
    Thus,
    \[
        \widetilde\mu(t)=\frac{\widetilde u'(t)}{2\widetilde u(t)}
    \]
    is also $\tau$-periodic.
    Moreover, by~\ref{lem:step-1}, there exists $C\in\R$ such that
    \[
        \widetilde u(t)=C\,e^{2\int_{t_0}^{t} \widetilde\mu(s)\,ds}.
    \]
    Since $\widetilde u$ is $\tau$-periodic, we obtain
    \[
        1=\frac{\widetilde u(t+\tau)}{\widetilde u(t)} = e^{2\int_t^{t+\tau}\widetilde\mu(s)\,ds}.
    \]
    Hence $\int_t^{t+\tau}\widetilde\mu(s)\,ds=0$ for all $t$.
    Since $\widetilde\mu$ is $\tau$-periodic, this is equivalent to $\int_0^\tau \widetilde\mu(s)\,ds=0$.
    This gives the second possibility.
\end{proof}

Before proving the type~II classification, we show a curvature consequence that is used to identify the auxiliary connection.

\begin{lemma}\label{lem:curvature-II}
    Under hypothesis~\eqref{assumption:H1}, if $\psi$ is of type~II, $V$ never vanishes and $\dim M \geq 3$, then the curvature $2$-form $\Omega^A$ of the auxiliary $\U(1)$-connection $A$ satisfies $V \lrcorner \Omega^A = 0$.
\end{lemma}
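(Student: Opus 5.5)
The plan is to derive the statement from the integrability condition of the Killing equation, once the Killing function is known to be constant. By~\ref{lem:step-2}, the hypotheses (type~II, $V$ nowhere vanishing, $\dim M\geq 3$) force $\mu$ to be constant. With $d\mu=0$, the identity~\cite[Lem.~3.1(iii)]{GN2015} loses all its $d\mu$-terms. This is the same identity already used in the proof of~\ref{lem:step-2}, where only its $d\mu$-free part mattered.

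\textbf{Step 1: the Ricci-type identity.} Rather than rely on the exact constants in~\cite{GN2015}, I would rederive the identity. Differentiating $\nabla^{g,A}_Y\psi=i\mu Y\cdot\psi$ with $\mu$ constant gives $R^{g,A}_{XY}\psi=\mu^2(X\cdot Y-Y\cdot X)\cdot\psi$, up to a sign fixed by the convention~\eqref{eq:curv}. Here $R^{g,A}$ is the curvature of the spinorial Levi-Civita connection plus the term $\tfrac{i}{2}\Omega^A(X,Y)$. Contracting with Clifford multiplication, $\sum_j e_j\cdot R^{g,A}_{X e_j}\psi$, and using the standard Ricci identity for the Levi-Civita part, one obtains for every $X\in\mathfrak X(M)$
\[
    \Bigl(\tfrac12\Ric^g(X)+2(n-1)\mu^2X\Bigr)\cdot\psi=\tfrac{i}{2}\,(X\lrcorner\Omega^A)^\sharp\cdot\psi .
\]
The precise numerical constants are irrelevant. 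What matters is the structure $Z_X\cdot\psi=i\,W_X\cdot\psi$, where $Z_X=\tfrac12\Ric^g(X)+2(n-1)\mu^2X$ and $W_X=\tfrac12(X\lrcorner\Omega^A)^\sharp$ are both \emph{real} vector fields. This holds because $\Ric^g$ is real symmetric and $\Omega^A\in\Omega^2(M,\R)$.

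\textbf{Step 2: pair with $\psi$.} Take the Hermitian product of the identity with $\psi$. By the definition of the Dirac current, $\langle Y\cdot\psi,\psi\rangle=-i\,g(V,Y)$ for every real $Y$. The identity therefore becomes
\[
    -i\,g(V,Z_X)=g(V,W_X).
\]
The left-hand side is purely imaginary and the right-hand side is real, so both vanish. In particular $g(V,W_X)=\tfrac12\,\Omega^A(X,V)=0$ for all $X$, that is, $V\lrcorner\Omega^A=0$. As a by-product, $\Ric^g(V)=-4(n-1)\mu^2V$.

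\textbf{Main obstacle.} The delicate point is the constancy of $\mu$. If $d\mu\neq0$, the integrability identity acquires terms like $d\mu\cdot X\cdot\psi$, whose pairing with $\psi$ is not purely real or imaginary, and the argument breaks. This is exactly where type~II and $\dim M\geq 3$ enter, through~\ref{lem:step-2}. Beyond that, I only need to check carefully that the curvature identity has the claimed real/imaginary structure; the signs and constants need not be tracked.
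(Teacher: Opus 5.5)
Your proposal is correct and follows essentially the same route as the paper. Both arguments use~\ref{lem:step-2} to make $\mu$ constant, compute $R^{g,A}_{Xe_i}\psi$ from the Killing equation, combine it with the Ricci identity and $R^{g,A}=R^g\pm\tfrac{i}{2}\Omega^A$ to get $\Ric^g(X)\cdot\psi=-4\mu^2(n-1)X\cdot\psi\pm i(X\lrcorner\Omega^A)\cdot\psi$, and then pair with $\psi$ and separate real and imaginary parts. As you noted, only this real/imaginary structure matters, not the signs, and your by-product $\Ric^g(V)=-4(n-1)\mu^2V$ is also correct.
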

\begin{proof}
    Since, on a neighborhood, $\nabla^{g,A} = \nabla^g + \frac{i}{2} \omega$, where $\omega$ is the connection $1$-form of $A$, it follows that 
    \[
        R^{g,A} = R^g + \frac{i}{2} \Omega^A,
    \]
    where $R^g$ is the curvature of $\nabla^g$ and $R^{g,A}$ is the curvature of $\nabla^{g,A}$.
    By the Ricci identity~\cite[p.~16]{BFGK1991} (note that we have the opposite sign convention for the curvature), we have
    \[
        \frac{1}{2} \Ric^g(X) \cdot \psi = \sum_{i=1}^n e_i \cdot R^{g}_{Xe_i} \psi,
    \]
    where $\{e_i\}_{i=1}^n$ is a local orthonormal frame.
    Thus,
    \[
        \frac{1}{2} \Ric^g(X) \cdot \psi = \sum_{i=1}^n e_i \cdot R^{g,A}_{Xe_i}\psi - \frac{i}{2} \sum_{i=1}^n e_i \cdot \Omega^A(X,e_i) \psi.
    \]
    Since $\mu$ is constant when $\dim M \geq 3$,
    \begin{align*}
        R^{g,A}_{Xe_i}\psi &= \nabla^{g,A}_{[X,e_i]} \psi - \nabla^{g,A}_X \nabla^{g,A}_{e_i} \psi + \nabla^{g,A}_{e_i} \nabla^{g,A}_X \psi  = 2\mu^2 (e_i \cdot X + g(X,e_i)) \psi.
    \end{align*}
    Therefore, combining the previous two identities, we obtain
    \[
        \Ric^g(X) \cdot \psi = - 4\mu^2 (n-1) X \cdot \psi + i (X \lrcorner \Omega^A) \cdot \psi.
    \]
    We now take the inner product with $\psi$ and use that $\langle Y\cdot \psi,\psi\rangle$ is purely imaginary for every vector field $Y$ to obtain $i \langle (X \lrcorner \Omega^A)\cdot \psi,\psi\rangle = 0$.
    By the definition of Dirac current, we have $(X\lrcorner \Omega^A)(V) = (V \lrcorner \Omega^A)(X)$ for all $X \in \mathfrak{X}(M)$.
    This proves the lemma.
\end{proof}

\begin{theorem}\label{thm:typeII-global}
    Let $(M^n,g)$ be a $\Spin^c$-Riemannian manifold of dimension $n \geq 3$, admitting a generalized imaginary $\Spin^c$-Killing spinor $\psi$ of type~II, with nowhere-vanishing Dirac current $V$ and Killing function $i\mu$.
    Assume that the leaves of $\mathcal D=\ker(\xi^\flat)$ are complete or $\xi$ is complete.
    Then
    \begin{enumerate}
        \item $(M,g)$ is isometric to a warped product of the form~\eqref{eq:typeII-warped}.
        Let $(F,h)$ denote the Riemannian manifold appearing in this warped product.
        Then $(F,h)$ carries two non-trivial real $\Spin^c$-Killing spinors with opposite Killing numbers $\pm \mu$.
        \item The auxiliary $\U(1)$-connection is flat.
        \item If $(M,g)$ is not locally isometric to real hyperbolic space, then either $\dim M = 7$ and $(F,h)$ is strictly nearly Kähler, or $\dim M = 4k +2$ with $k\in\mathbb N$, $k \geq 1$, and $(F,h)$ is an Einstein--Sasakian manifold.
    \end{enumerate}
\end{theorem}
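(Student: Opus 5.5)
Part (1) follows from the previous lemmas, so the plan is to prove it first and then use it for (2) and (3). By \ref{lem:step-2}, $\mu$ is constant and $\xi$ is not complete. Pass to the universal cover $(\widetilde M,\widetilde g)$ and apply \ref{lem:step-3}(2) to the lifted data, which gives $\widetilde M\cong (0,t_m)\times\widetilde F$ with metric $dt^2+\sinh^2(2\mu t)\widetilde h$. Since $\widetilde\xi$ is not complete, \ref{lem:step-4} shows $a_\gamma=0$ for all $\gamma\in\Gamma$. Hence $\Gamma$ acts only on $\widetilde F$, by isometries $\varphi_\gamma$, and $M\cong (0,t_m)\times F$ with $F=\widetilde F/\Gamma$.

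For the Killing spinors on $F$, there are two options. The first is to apply \ref{prop:leaves} directly on $M$: each leaf $\{t\}\times F$ carries real $\Spin^c$-Killing spinors with numbers $\pm\mu\sqrt{q_\psi}/u(t)$, and rescaling as in \ref{lem:step-3} gives $\pm\mu$ on $(F,h)$. The second is to check that the spinors $\Phi^\pm$ built from $\psi$ are $\Gamma$-invariant, since they are built canonically from $\psi$, $\xi$ and the norms, so they descend. The first option avoids covers entirely, so I will use it.

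For (2), I will start from \ref{lem:curvature-II}, which gives $\xi\lrcorner\Omega^A=0$. It remains to show $\Omega^A|_{\mathcal D}=0$. The plan is to use the Ricci identity for real $\Spin^c$-Killing spinors on the leaf $F$. For a Killing spinor $\Phi$ with number $\lambda$ on $F$ of dimension $m=n-1$, the same computation as in \ref{lem:curvature-II} gives
\[
\tfrac12\Ric^F(X)\bullet\Phi-\tfrac{i}{2}(X\lrcorner\Omega^{A_F})\bullet\Phi = 2\lambda^2(m-1)X\bullet\Phi .
\]
The connection $A_F$ is the restriction of $A$, and $\Omega^{A_F}=\iota^*\Omega^A$ because $\xi\lrcorner\Omega^A=0$.

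Alternatively, I can work on $M$ directly. Using $\Ric^g(X)\cdot\psi=-4\mu^2(n-1)X\cdot\psi+i(X\lrcorner\Omega^A)\cdot\psi$ and applying it to both components $\psi^\pm$ (equivalently to $\Phi^\pm$), pairing with $Y\cdot\psi^\pm$ for $Y\in\mathcal D$ should force $(X\lrcorner\Omega^A)\cdot\psi^\pm$ to be real-proportional to $X\cdot\psi^\pm$. The clean route is to show that $\Ric^g$ is Einstein with constant $-4\mu^2(n-1)$, which follows from the warped-product formulas in (1): the fibre is Einstein with the right constant, by the known results on real Killing spinors whenever $\Omega^{A_F}$ vanishes. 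To break the circularity, I will instead use that the equation $\Ric(X)\cdot\varphi=cX\cdot\varphi+i(X\lrcorner\Omega)\cdot\varphi$ holds for two spinors $\Phi^+,\Phi^-$ whose Killing numbers are opposite. Subtracting the resulting identities, combined with the relation $\Phi^-=\xi\cdot\Phi^+$-type relations between them, should yield $(X\lrcorner\Omega^A)\cdot\Phi^+=0$ and hence $\Omega^A=0$. This is the step I expect to be the main obstacle: getting an honest algebraic argument that the symmetric part (Ricci) and the skew part ($\Omega^A$) separate. I would first try taking real and imaginary parts of $\langle(\ldots)\cdot\Phi^\pm,Y\cdot\Phi^\pm\rangle$, in the spirit of the proof of \ref{lem:step-2}.

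For (3), once $A$ is flat, on the simply connected cover the $\Spin^c$ structure is locally a spin structure with the flat connection, so $\widetilde F$ is a spin manifold carrying real Killing spinors with numbers $+\mu$ and $-\mu$. By Bär's cone construction, the cone over $(\widetilde F,4\mu^2\widetilde h)$ has reduced holonomy, unless it is flat, in which case $F$ has curvature $4\mu^2$ and $M$ is locally hyperbolic. The existence of Killing spinors of both signs rules out the odd-dimensional cases with a single sign, namely the $G_2$ cone for $\dim F=6$ with only one sign, the $\Spin(7)$ cone (nearly parallel $G_2$), and 3-Sasakian of the type $\dim F=4k+3$ with one sign. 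This leaves $\dim F=6$ strictly nearly Kähler, which carries both signs, and Einstein--Sasakian of dimension $4k+1$, giving $\dim M=4k+2$, using the table in Bär's paper. Hyperkähler cones in dimension $4k+3$ give only one sign, so I expect them to be excluded, but I must check whether both signs can occur there.
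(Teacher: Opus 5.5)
Your part (1) is fine and follows the paper's route: pass to the universal cover, apply \ref{lem:step-3}, use \ref{lem:step-4} to get $a_\gamma=0$ because $\xi$ is incomplete, and take the leafwise Killing spinors from \ref{prop:leaves}.

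The genuine gap is in (2), and the route you sketch there cannot work. For a leafwise Killing spinor with leafwise-constant number $\lambda$, the curvature term $R^{A}_{XY}\Phi$ is quadratic in $\lambda$. So the Ricci identity you write down is literally the same equation for $\Phi^+$ (number $\lambda$) and $\Phi^-$ (number $-\lambda$), and subtracting the two only reproduces it for $\Phi^+-\Phi^-$. Likewise on $M$: for $X\in\mathcal D$, every term of $\Ric^g(X)\cdot\psi=-4\mu^2(n-1)X\cdot\psi+i(X\lrcorner\Omega^A)\cdot\psi$ interchanges $\Sigma^\pm M$. Splitting into $\psi^\pm$ therefore just gives the same identity twice. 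More fundamentally, flatness of $A$ is not a pointwise or local consequence of having Killing spinors of both signs. A non-Einstein Sasakian manifold with its canonical $\Spin^c$-structure satisfies the leafwise identity with $\Omega\neq 0$. Over incomplete bases one also gets real $\Spin^c$-Killing spinors of \emph{both} signs with non-flat auxiliary connection, for instance on spherical joins whose cone splits locally as a non-Ricci-flat K\"ahler cone times $\R^{2k}$. So completeness of the leaves must enter the proof of (2), and your argument never uses it.

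The missing ingredient is global. The paper applies Moroianu's classification of simply connected $\Spin^c$-manifolds with real Killing spinors (his Cor.~4.2). That result goes through the cone and Gallot's theorem, and hence uses completeness of $(\widetilde F,\widetilde h)$. In the non-spin case, $\widetilde F$ is a non-Einstein Sasakian manifold with the canonical or anticanonical structure, and then Killing spinors occur with only one sign. Hence two Killing spinors with numbers $\pm\mu$ for the same auxiliary connection force $\iota^*\Omega^A=0$, and \ref{lem:curvature-II} then gives $\Omega^A=0$. Note that $\Omega^{A_F}=\iota^*\Omega^A$ holds regardless; $\xi\lrcorner\Omega^A=0$ is what lets you pass from $\iota^*\Omega^A=0$ to $\Omega^A=0$.

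Your (3) starts from flatness, so it inherits this gap. Otherwise it is the paper's argument via Bär's classification, but your case list is muddled. The case $\dim\widetilde F=6$ is not a one-sign case: in even dimensions the complex volume element exchanges the signs, which is why strictly nearly K\"ahler manifolds survive. The one-sign cases are exactly those with $\dim\widetilde F=4k+3$ (nearly parallel $G_2$, Einstein--Sasakian, $3$-Sasakian), so the hyperk\"ahler cones are indeed excluded. You should also explain why the nearly K\"ahler or Einstein--Sasakian structure descends from $\widetilde F$ to $F$. The paper uses that the Killing spinors are lifts from $F$, hence deck-invariant, and that these structures are algebraically determined by them.
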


\begin{proof}
    By the global description obtained above in the type~II case, the vector field $\xi$ is not complete.
    Hence the assumption implies that the leaves of $\mathcal D$ are complete, and
    \[
        (M,g)\cong \bigl((0,t_m)\times F,\ dt^2+\sinh^2(2\mu t)\,h\bigr),
    \]
    where $(F,h)$ is complete and carries two non-trivial real $\Spin^c$-Killing spinors with opposite Killing numbers $\pm\mu$.
    This proves~(1).

    We next prove that the auxiliary curvature vanishes. 
    By Moroianu's classification~\cite[Cor.~4.2]{M1997} of simply connected $\Spin^c$-manifolds carrying real Killing spinors, the non-spin case occurs only for non-Einstein Sasaki manifolds endowed with the canonical or anticanonical $\Spin^c$-structure.
    Moreover, for each of these two fixed $\Spin^c$-structures, the corresponding real $\Spin^c$-Killing spinors occur with only one sign of the Killing number.
    Hence the existence, for the same auxiliary connection, of two real $\Spin^c$-Killing spinors with Killing numbers $\pm\mu$ excludes the non-spin case.
    Therefore the lifted $\Spin^c$-structure on $(\widetilde F,\widetilde h)$ must coincide with the spinorial Levi-Civita connection.
    In particular, the lifted auxiliary curvature vanishes, so $\widetilde\Omega^{A_F}=0$.
    Since $\widetilde\Omega^{A_F}$ is the pull-back of $\Omega^{A_F}$ by the universal covering map, it follows that $\Omega^{A}(\mathcal{D}, \mathcal{D}) =0$.
    By~\ref{lem:curvature-II}, we obtain $\Omega^A = 0$.
    Therefore, the auxiliary $\U(1)$-connection is flat.
    This proves~(2).

    To prove~(3), we apply Bär's classification~\cite{B1993} to the complete simply connected spin manifold $(\widetilde F,\widetilde h)$.
    Since $(\widetilde F,\widetilde h)$ carries two non-trivial real $\Spin^c$-Killing spinors with Killing numbers $\pm\mu$, exactly one of the following occurs: either $(\widetilde F,\widetilde h)$ is a round sphere, or $\dim \widetilde F=6$ and $(\widetilde F,\widetilde h)$ is strictly nearly Kähler, or $\dim \widetilde F=4k+1$ with $k\ge1$ and $(\widetilde F,\widetilde h)$ is an Einstein--Sasakian manifold.

    If $(\widetilde F,\widetilde h)$ is a round sphere, then $(F,h)$ is locally isometric to the round sphere.
    Hence the warped product metric
    \[
        dt^2+\sinh^2(2\mu t)\,h
    \]
    is locally the standard metric on real hyperbolic space, and therefore $(M,g)$ is locally isometric to $\mathbb H^n$.
    Thus, under the assumption that $(M,g)$ is not locally isometric to real hyperbolic space, this case is excluded.

    Assume now that $\dim F=6$.
    Let $\varphi$ be one of the real $\Spin^c$-Killing spinors on $(F,h)$, and let $\widetilde\varphi$ be its lift to $(\widetilde F,\widetilde h)$.
    By~(2), and since $(\widetilde F,\widetilde h)$ is simply connected, $\widetilde\varphi$ is a real Killing spinor invariant under the deck transformations.
    The strict nearly Kähler structure on $\widetilde F$ is determined algebraically
    by $\widetilde\varphi$, see~\cite{G1990} (or equivalently, the cone $C(\widetilde F)$ carries the parallel $G_2$-structure, whose tensor is induced by the corresponding parallel spinor).
    Since this tensor is defined from a deck-invariant spinor, it is itself deck-invariant and therefore descends to $(F,h)$.
    Consequently, $(F,h)$ is strictly nearly Kähler.

    The case $\dim F=4k+1$ is analogous.
    A lifted $\Spin^c$-Killing spinor on $(\widetilde F,\widetilde h)$ is deck-invariant and determines the Einstein--Sasakian structure on $\widetilde F$.
    Hence this structure descends to $(F,h)$, so $(F,h)$ is an Einstein--Sasakian manifold.

    Since $\dim M=\dim F+1$, we conclude that either $\dim M=7$ and $(F,h)$ is strictly nearly Kähler, or $\dim M=4k+2$ with $k\ge1$ and $(F,h)$ is an Einstein--Sasakian manifold.
    This proves~(3).
\end{proof}


\subsection{When the Dirac current vanishes}\label{sec:vanish}

The strategy in this subsection is local. 
We first analyze the behavior of the spinor near a zero of the Dirac current, then show that such zeros are isolated, and finally use the fact that $V$ is a closed conformal vector field to recover a radial warped-product description.

\begin{lemma}\label{lem:incomp}
    Under hypothesis~\eqref{assumption:H1}, if there exists $p\in M$ such that $V(p)=0$, then $\mu(p) \neq 0$.
\end{lemma}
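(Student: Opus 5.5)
The plan is to argue by contradiction using the identity $\nabla^g_X V = 2\mu|\psi|^2 X$ from \eqref{eq:conformal}. Suppose $V(p)=0$ and $\mu(p)=0$. Since $\psi$ never vanishes (\cite[Lem.~3.1]{GN2015}), $|\psi|^2>0$ everywhere. Moreover $|\psi(p)|^4 = q_\psi + |V(p)|^2 = q_\psi$, so $q_\psi>0$ and $\psi$ is of type~II. I would then study the behaviour of $V$ and $\mu$ near $p$ to reach a contradiction.

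The first step is to note that at $p$ the full covariant derivative $\nabla^g V$ vanishes, because $\mu(p)=0$. Since $V$ is a closed conformal field, $V^\flat = \tfrac{1}{2\mu(p)}\ldots$ is not available. Instead I would use $V^\flat = \frac{1}{2}\,d(|\psi|^2)/\mu$ where $\mu\neq0$. More robustly, $V^\flat$ is closed, so locally $V = \grad \phi$ with $\phi = \tfrac12|\psi|^2$ up to a factor of $\mu$.

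Rather than pursue this route, the cleaner approach is to use the curvature identity. Differentiating $\nabla^g V = 2\mu|\psi|^2\,\Id$ gives
\[
R^g_{XY}V = -2\bigl(X(\mu|\psi|^2)\,Y - Y(\mu|\psi|^2)\,X\bigr)
\]
with the sign convention of \eqref{eq:curv}. Tracing this yields $\Ric^g(V) = c\,(n-1)\,\grad(\mu|\psi|^2)$ for a fixed nonzero constant $c$. Evaluating at $p$, where $V=0$, shows that $\grad(\mu|\psi|^2)(p)=0$ when $n\geq 2$. Since $\mu(p)=0$, this gives $|\psi(p)|^2\,d\mu(p)=0$, and hence $d\mu(p)=0$.

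The second step is to bootstrap to higher order. I would also bring in the spinorial integrability conditions of \cite[Lem.~3.1]{GN2015}, which express $d\mu\cdot\psi$ through $\Ric^g$, $\mu^2$ and $\Omega^A$. The aim is to show that $V$ and $\mu$ vanish to infinite order at $p$. The simplest route is probably the following. Along a unit-speed geodesic $\gamma$ from $p$, set $v(t)=g(V,\dot\gamma)$. Then $v' = 2\mu|\psi|^2$ and $(|\psi|^2)' = 2\mu v$, with $v(0)=0$ and $\mu(0)=0$. This system alone does not close, because $\mu$ is free.

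The alternative is to use $\nabla^{g,A}\psi = i\mu X\cdot\psi$ directly. At $p$ we have $\nabla^{g,A}\psi(p)=0$, and I would combine this with the condition $d\mu(p)=0$ obtained above. I expect this infinite-order step to be the main obstacle, because the Killing function is not constant and its derivatives are not controlled a priori.

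If the infinite-order argument fails, the fallback is a direct pointwise contradiction at $p$. The relation $\Ric^g(X)\cdot\psi = -4(n-1)\mu^2 X\cdot\psi + \text{(terms in } d\mu\text{)} + i(X\lrcorner\Omega^A)\cdot\psi$ holds at $p$ with $\mu=0$ and $d\mu=0$. I would try to show that this forces $V=0$ to propagate, via unique continuation for the overdetermined system $(\psi,\mu)$. Concretely, I would treat the system as a linear ODE along geodesics and look for a Grönwall-type argument that yields $V\equiv 0$ near $p$.

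A neighbourhood on which $V\equiv0$ would then give $\mu|\psi|^2\equiv0$ there, so $\mu\equiv0$ on an open set. The conclusion would then follow from unique continuation for generalized Killing spinors together with $\mu\not\equiv0$. This last deduction is where I am least sure the argument closes.
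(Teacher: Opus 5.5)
There is a genuine gap: you never establish that $V$ and $\mu$ vanish near $p$, which is the heart of the lemma. You flag this yourself as the main obstacle. Infinite-order vanishing at $p$ would not by itself give vanishing on a neighbourhood for smooth data.

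The missing idea is already in your hands. You derived $\Ric^g(X,V)=-(n-1)X(\lambda)$ with $\lambda=2\mu|\psi|^2$, but evaluated it only at $p$. Use it instead along a unit-speed geodesic $\gamma$ from $p$, as an evolution equation for $\lambda$. Your remark that the system for $v=g(V,\dot\gamma)$ ``does not close because $\mu$ is free'' is exactly where this is lost. Work with $\lambda$ rather than $\mu$: its derivative along $\gamma$ is not free, it is controlled linearly by $V$.

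The paper makes this explicit. The field $W=V-v\dot\gamma$ satisfies $\nabla^g_{\dot\gamma}W=\lambda\dot\gamma-\dot v\,\dot\gamma=0$ and $W(0)=0$, so $V=v\dot\gamma$ along $\gamma$. Then $\dot v=\lambda$ and $\dot\lambda=-\frac{1}{n-1}\Ric^g(\dot\gamma,\dot\gamma)\,v$, that is,
\[
\ddot v+\tfrac{1}{n-1}\Ric^g(\dot\gamma,\dot\gamma)\,v=0,\qquad v(0)=\dot v(0)=0 .
\]
Hence $v\equiv0$ and $\lambda\equiv0$ along every geodesic from $p$, so $V$ and $\mu$ vanish on a normal neighbourhood of $p$. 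Even without the radiality observation, the pair $(V,\lambda)$ along $\gamma$ solves the linear system $\nabla^g_{\dot\gamma}V=\lambda\dot\gamma$, $\dot\lambda=-\frac{1}{n-1}\Ric^g(\dot\gamma,V)$ with zero initial data. No spinorial integrability conditions, no bootstrap, and no Gr\"onwall estimates on $\psi$ are needed. Your computation of $d\mu(p)=0$ is correct but superfluous.

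Your globalization step is also not justified as stated. No pointwise contradiction can be found at $p$. The local configuration $\mu\equiv0$, $V\equiv0$ does occur, for instance for a constant chiral spinor on flat $\R^4$. So the contradiction must come from the global hypothesis $\mu\not\equiv0$.

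You invoke an unspecified unique continuation principle for generalized Killing spinors to pass from ``$\mu\equiv0$ on an open set'' to a contradiction. This is unnecessary. Set $Z=\{x\in M: V(x)=0,\ \mu(x)=0\}$. The local argument above applies at every point of $Z$, so the closed set $Z$ is also open. By connectedness and $p\in Z$, $Z=M$, which contradicts $\mu\not\equiv0$.
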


\begin{proof}
    Assume by contradiction that $V(p)=0$ and $\mu(p)=0$.
    Since $\psi$ never vanishes, setting $\lambda:=2\mu|\psi|^2$, we also have $\lambda(p)=0$.
    By~\eqref{eq:conformal}, $\nabla^g_XV=\lambda X$, for all $X\in\mathfrak X(M)$. 
    Let $\gamma$ be any unit-speed geodesic with $\gamma(0)=p$, and define $s(t):=g(V(\gamma(t)),\dot\gamma(t))$.

    Then $\dot s(t)=g(\nabla^g_{\dot\gamma}V,\dot\gamma)=\lambda(\gamma(t))$.
    Moreover, the vector field $W(t):=V-s\dot\gamma$ satisfies
    \[
        \nabla^g _{\dot\gamma}W =\nabla^g_{\dot\gamma}V - \dot s\dot\gamma = \lambda\dot\gamma-\lambda\dot\gamma =0.
    \]
    Since $W(0)=V(p)=0$, it follows that $W$ vanishes along $\gamma$, hence
    $V(\gamma(t))=s(t)\dot\gamma(t)$.

    On the other hand, from $\nabla^g_XV=\lambda X$, one computes
    \[
        R^g_{XY}V=-X(\lambda)\,Y+Y(\lambda)\,X, \qquad \Ric^g(X,V)=-(n-1)X(\lambda).
    \]
        Taking $X=\dot\gamma$ and using $V=s\dot\gamma$, we obtain
    \[
        \dot \lambda=-\frac{1}{n-1}\Ric^g(\dot\gamma,\dot\gamma)\,s.
    \]
    Since $\dot s=\lambda$, this yields
    \[
        \ddot s+\frac{1}{n-1}\Ric^g(\dot\gamma,\dot\gamma)\,s=0.
    \]
    The initial conditions are $s(0)=0$, and $\dot s (0)=\lambda(0)=0$. 
    Hence, by uniqueness, $s=0$ along any geodesic starting from $p$.
    Thus both $V$ and $\lambda$ vanish on a normal neighborhood of $p$.
    Therefore, the set $\{x\in M:\ V(x)=0,\ \mu(x)=0\}$ is open and clearly closed as well.
    Since $M$ is connected and $\mu$ is not identically zero, this set must be empty.
    This contradiction proves that $\mu(p)\neq0$.
\end{proof}

We now show that the zeros of $V$ are isolated. 

\begin{lemma}\label{lem:isolated-zeros}
    Under hypothesis~\eqref{assumption:H1}, if there exists $p\in M$ such that $V(p)=0$, then $p$ is an isolated zero of $V$.
\end{lemma}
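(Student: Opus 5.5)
The idea is to use the identity $\nabla^g_X V = \lambda X$ from~\eqref{eq:conformal}, with $\lambda := 2\mu|\psi|^2$, together with \ref{lem:incomp}, and to conclude that $p$ is a nondegenerate zero of $V$.

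Since $V(p)=0$, the derivative $\nabla^g V$ at $p$ is intrinsically defined: it is the linearization of $V$ at its zero. By~\eqref{eq:conformal} this linearization is
\[
    (\nabla^g V)_p=\lambda(p)\,\Id_{T_pM}, \qquad \lambda(p)=2\mu(p)|\psi(p)|^2 .
\]
By \ref{lem:incomp}, $\mu(p)\neq 0$. Since $\psi$ never vanishes, $|\psi(p)|\neq 0$, and therefore $\lambda(p)\neq 0$. So the linearization of $V$ at $p$ is an invertible endomorphism.

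Now work in a chart around $p$ and view $V$ as a map $U\to\R^n$ with $V(p)=0$. Its differential at $p$ is $(\nabla^g V)_p$, because the Christoffel terms multiply $V(p)=0$. This differential equals $\lambda(p)\Id$, which is invertible. By the inverse function theorem, $V$ is a local diffeomorphism near $p$, in particular injective on a neighborhood of $p$. Hence $p$ is the only zero of $V$ in that neighborhood.

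An alternative, more geometric route follows the proof of \ref{lem:incomp}. Along unit-speed geodesics $\gamma$ from $p$, one has $V=s\dot\gamma$ with $s(0)=0$ and $\dot s(0)=\lambda(p)\neq0$. Thus $s(t)\neq 0$ for small $t\neq 0$, uniformly over directions by compactness of the unit sphere in $T_pM$ and continuity of $\ddot s$. This again shows $V\neq0$ on a punctured normal ball.

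The only point requiring care is that the coordinate differential of $V$ at a zero agrees with $\nabla^g V$. This is immediate, so no step here is a real obstacle; the substantive input is \ref{lem:incomp}.
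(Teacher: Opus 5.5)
Your proof is correct and is the paper's argument: by \eqref{eq:conformal} and Lemma~\ref{lem:incomp}, $(\nabla^g V)_p = 2\mu(p)|\psi(p)|^2\,\Id_{T_pM}$ is invertible, so $p$ is a nondegenerate, hence isolated, zero of $V$. Your additional points, that the coordinate differential of $V$ agrees with $\nabla^g V$ at a zero and the alternative radial-geodesic argument, are both valid and only fill in details the paper leaves implicit.
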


\begin{proof}
    By~\eqref{eq:conformal}, $\nabla^g _X V = 2 \mu |\psi|^2 X$,
    for all $X \in \mathfrak{X}(M)$.
    If $V(p) = 0$, then $(\nabla^g V)_p = 2 \mu |\psi(p)|^2 \Id_{T_p M}$ is a linear isomorphism because $\mu(p)\neq 0$ and $|\psi(p)| \neq 0$; thus, $V$ has an isolated zero at $p$.
\end{proof}

Finally, we prove the main result of this subsection.

\begin{theorem}\label{thm:vanishing-current}
    If a connected $\Spin^c$-Riemannian manifold $(M,g)$ with $\dim M \geq 3$ admits a generalized imaginary $\Spin^c$-Killing spinor $\psi$ whose Dirac current $V$ has zeros, then $(M,g)$ is locally isometric to the real hyperbolic space.
\end{theorem}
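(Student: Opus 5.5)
Fix a zero $p$ of $V$. By~\ref{lem:incomp} we have $\mu(p)\neq0$, and by~\ref{lem:isolated-zeros} the zero is isolated, so $V$ is nonzero on a punctured neighbourhood of $p$. Since $V$ vanishes somewhere, $\psi$ is not of type~I (type~I currents never vanish). Hence $\psi$ is of type~II with $q_\psi>0$, and $|\psi(p)|^2=\sqrt{q_\psi}$.

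\textbf{Step 1: $\mu$ is constant.} Apply the pointwise argument of~\ref{lem:step-2} on the open dense set $M\smallsetminus V^{-1}(0)$ (dense since the zeros are isolated). That argument is purely local: it uses only $V\neq0$ on an open set where $d\mu\neq0$ and $\dim M\geq3$. It gives $d\mu=0$ there, and by continuity $\mu$ is constant on $M$. Lemma~\ref{lem:curvature-II} is also local, so $V\lrcorner\Omega^A=0$ away from the zeros.

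\textbf{Step 2: radial structure.} Work on a geodesic ball $B$ around $p$. Set $\lambda=2\mu|\psi|^2$, so that $\nabla^gV=\lambda\,\Id$. The argument in the proof of~\ref{lem:incomp} shows that along every unit-speed geodesic from $p$ we have $V=s\dot\gamma$, where $\dot s=\lambda$. Hence $V$ is radial: $V=s(r)\partial_r$. The level sets of $|V|$, and of $|\psi|^2$, are the geodesic spheres, and $\xi=\pm\partial_r$ on $B\smallsetminus\{p\}$. By~\ref{lem:step-1} with $\mu$ constant, $|V|=\sqrt{q_\psi}\,|\sinh(2\mu r+C)|$. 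Vanishing at $r=0$ forces $C=0$, so $|V|=\sqrt{q_\psi}\sinh(2|\mu| r)$.

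The leaves of $\mathcal D$ in $B\smallsetminus\{p\}$ are the geodesic spheres, and these are totally umbilic with mean curvature $\alpha$ constant on each sphere, by~\ref{prop:principal-1}. The standard fact that a closed conformal vector field gives a local warped product then yields
\[
g=dr^2+\sinh^2(2\mu r)\,h
\]
on $B\smallsetminus\{p\}$, with $h$ a metric on $S^{n-1}$. This is the same conformal splitting used in~\ref{lem:step-3}, now applied to the compact (hence complete) leaves.

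\textbf{Step 3: identifying $h$.} I expect this to be the main obstacle. Smoothness of $g$ at $p$ forces $\sinh^2(2\mu r)h/r^2\to g_p|_{\text{sphere}}$ as $r\to0$. Since $h$ does not depend on $r$, $h$ must equal $4\mu^2$ times the round metric of curvature $1$, that is, the round metric of curvature $4\mu^2$. (Alternatively, by~\ref{prop:leaves} the spheres carry real Killing spinors, but the regularity argument is more direct.) With this $h$, the warped product $dr^2+\sinh^2(2\mu r)h$ is exactly the hyperbolic metric of curvature $-4\mu^2$, so $B$ is hyperbolic.

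\textbf{Step 4: spreading to all of $M$.} Let $\mathcal U$ be the set of points having a neighbourhood of constant curvature $-4\mu^2$. It is open and nonempty. To see that it is closed, note that constant curvature is a closed condition on the curvature tensor, which is smooth, provided $\mathcal U$ is dense.

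To get density, use the analyticity-free structure of the problem. On each component of $M\smallsetminus V^{-1}(0)$ the metric is the warped product $dt^2+\sinh^2(2\mu t)h$ with $h$ fixed along the flow of $\xi$, since $\mathcal L_\xi$ preserves the transverse structure conformally. On the component meeting $B$, the slices are round near $p$, and the warped form propagates this roundness along the $\xi$-lines. So the curvature equals $-4\mu^2$ on the union of $\xi$-orbits through $B$, which is open and closed in $M\smallsetminus V^{-1}(0)$. Connectivity of $M$ minus isolated points, which holds since $\dim M\geq3$, then gives $R^g\equiv-4\mu^2$ on all of $M$.
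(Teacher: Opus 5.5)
Your Steps~1--3 follow the paper's proof: type~II, constancy of $\mu$ via~\ref{lem:step-2} on the dense set $M\smallsetminus V^{-1}(0)$, radiality of $V$ from the proof of~\ref{lem:incomp}, and the local form $dr^2+\sinh^2(2\mu r)h$ near $p$. Your identification of $h$ through $g_r/r^2\to g_{S^{n-1}}$ in polar coordinates is a more direct substitute for the paper's bounded-sectional-curvature argument. It gives $4\mu^2h=g_{S^{n-1}}$, i.e.\ $h=(4\mu^2)^{-1}g_{S^{n-1}}$, not $4\mu^2$ times it; that metric does have curvature $4\mu^2$, as you say.

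The genuine gap is Step~4: the union of $\xi$-orbits through $B$ is open but in general \emph{not} closed in $M\smallsetminus V^{-1}(0)$. Take $M=\mathbb H^4\smallsetminus D$, where $D$ is a small closed cap in the geodesic sphere $S_1(p)$. Equip it with the restriction of an imaginary Killing spinor that is chiral at $p$, so that $V(p)=0$. The $\xi$-orbits are pieces of radial geodesics from $p$, and those in the directions of $D$ stop at $D$. So the saturation of $B$ misses the region behind $D$, while it accumulates on the rays behind $\partial D$, which lie in $M$. Transport along $\xi$-lines never reaches that region, and for a general $M$ nothing in your argument controls the curvature there. (Also, once $\mathcal U$ is dense you are done by continuity, so the ``closed provided dense'' step does no work.)

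What is missing is unique continuation, either across each leaf (a transverse metric that is round on an open subset of a connected leaf is round on all of it) or directly on $M$. The paper's final paragraph propagates the same way, via connectivity of $M\smallsetminus L$ and the chartwise equivalence between $K_g=-4\mu^2$ and roundness of the transverse metric. It leaves this ingredient implicit, so you cannot borrow it from there.

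One way to supply it is the following. On $B$, where $g$ is hyperbolic, the pointwise identity $\Ric^g(X)\cdot\psi=-4\mu^2(n-1)X\cdot\psi+i(X\lrcorner\Omega^A)\cdot\psi$ from the proof of~\ref{lem:curvature-II} (valid once $\mu$ is constant) forces $\Omega^A=0$. Suppose $\Omega^A\equiv0$ on all of $M$, e.g.\ in the spin case. Then the same identity gives $\Ric^g=-4\mu^2(n-1)g$ everywhere, so $g$ is real-analytic in harmonic coordinates (DeTurck--Kazdan). The difference between $R^g$ and the constant-curvature tensor vanishes on $B$, so by analyticity it vanishes on the connected manifold $M$, and no foliation argument is needed.

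In the genuinely $\Spin^c$ case, $\Omega^A$ is $\xi$-invariant (it is closed and $V\lrcorner\Omega^A=0$), so it vanishes on the saturation of $B$. You still need a further argument that it cannot become non-zero elsewhere, for instance a local version of the reasoning in~\ref{thm:typeII-global}(2) based on the two opposite-sign Killing spinors on the leaves.
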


\begin{proof}
    Since generalized imaginary $\Spin^c$-Killing spinors never vanish (see~\cite[Lem.~3.1]{GN2015}) and the Dirac current $V$ vanishes somewhere, the spinor must be of type~II.
    Moreover, the open set where $V$ does not vanish is dense in $M$, and thus, by~\ref{lem:step-2} and continuity, the Killing function $\mu$ is constant everywhere.
    
    Let $r=d_g(p,\cdot)$.
    In the proof of~\ref{lem:incomp}, along every geodesic $\gamma$ starting at $p$, we proved that $V(\gamma(r)) = g(V(\gamma(r)),\dot\gamma(r))\dot\gamma(r)$; it follows that $\xi=\pm\partial_r$.
    Thus, $V$ and $\xi$ are radial vector fields.
    Moreover, the local reducibility of $\bar{g}=|V|^{-2}g$ implies that $g=dr^2+|V|^2h$, where $h$ is independent of $r$.
    Therefore, by~\ref{lem:step-1}, we have
    \[
        g=dr^2+q_{\psi}\,\sinh^2(2|\mu|r)\,h.
    \]
    We now use the formulas for the sectional curvatures of a warped product.
    Set $f(r)=\sqrt{q_{\psi}}\,\sinh(2|\mu|r)$.
    Let $X$, $Y$ be $h$-orthonormal vector fields tangent to the leaves of $\mathcal{D}$.
    Since $\dim M\ge 3$, such tangential $2$-planes exist.
    For the $g$-orthonormal tangential plane spanned by $\frac{1}{f}X$ and $\frac{1}{f}Y$, the sectional curvature is
    \[
        K_g\left(\frac{1}{f}X,\frac{1}{f}Y\right) =\frac{K_h(X,Y)-(\dot f)^2}{f^2}.
    \]
    On the other hand, since we are only interested in the behavior when $r\to 0$, we can write
    \[
        f(r)=2|\mu|\sqrt{q_\psi}\,r+O(r^3), \qquad \dot f(r)=2|\mu|\sqrt{q_\psi}+O(r^2).
    \]
    Hence
    \[
    K_g\left(\frac{1}{f}X,\frac{1}{f}Y\right) = \frac{K_h(X,Y)-4\mu^2q_\psi+O(r^2)}{4\mu^2q_\psi\,r^2+O(r^4)}.
    \]
    Since $g$ is smooth at $p$, its sectional curvature remains bounded as $r\to0$.
    It follows that the constant term in the numerator must vanish, and thus
    \[
        K_h(X,Y)=4\mu^2q_\psi
    \]
    for every tangential $2$-plane.
    Therefore $h$ has constant sectional curvature $4\mu^2q_\psi$.
    Consequently, after identifying the leaves locally with $S^{n-1}$, we may write
    \[
        h=\frac{1}{4\mu^2q_\psi}\,g_{S^{n-1}}.
    \]
    Substituting this into the expression for $g$, we obtain
    \[
        g=dr^2+q_\psi\sinh^2(2|\mu|r)\,\frac{1}{4\mu^2q_\psi}\,g_{S^{n-1}} =dr^2+\left(\frac{\sinh(2|\mu|r)}{2|\mu|}\right)^2 g_{S^{n-1}}.
    \]
    This is the polar form of the metric of the real hyperbolic space of constant sectional curvature $-4\mu^2$.
    Hence $(M,g)$ is locally isometric to the real hyperbolic space in a neighborhood of each point $p \in L = \{q\in M : \: V_q = 0 \}$.
    Since $L$ is discrete and $n\geq 3$, $M\smallsetminus L$ is connected.
    On the other hand, on $M\smallsetminus L$, the metric is locally isometric to the warped-product descriptions obtained above.
    In each such description, the equality $K_g=-4\mu^2$ is equivalent to the corresponding transverse metric having constant sectional curvature $4\mu^2q_\psi$.
    Since this holds on the charts meeting a neighborhood of $L$, the connectivity of $M\smallsetminus L$ implies that it holds on all of $M\smallsetminus L$.
    Thus $(M,g)$ is locally isometric to the real hyperbolic space.
\end{proof}

\subsection[When the distribution D is irreducible]{\texorpdfstring{When the distribution $\mathcal{D}$ is irreducible}{When the distribution D is irreducible}}\label{sec:irreducible}

In this subsection, we say that the distribution $\mathcal{D}=\ker(\xi^\flat)$ is \emph{irreducible} if its leaves are irreducible in the sense of de~Rham, that is, if the universal Riemannian covering of each leaf does not split as a non-trivial Riemannian product.
Under this assumption, we refine the nowhere-vanishing case.
The main point is that, in the type~I non-spin situation, the geometry of the leaves forces a transverse Kähler structure, which gives rise to an almost contact metric structure on $M$ of Kenmotsu type.

\begin{lemma}\label{lem:curvature-I}
    Let $(M,g)$ be a $\Spin^c$-Riemannian manifold admitting a generalized imaginary $\Spin^c$-Killing spinor $\psi$ of type~I with Dirac current $V$ and Killing function $i\mu$.
    Then the curvature $2$-form $\Omega^A$ of the auxiliary $\U(1)$-connection $A$ satisfies $V \lrcorner \Omega^A = 0$.
\end{lemma}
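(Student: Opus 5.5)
The plan is to run the argument of~\ref{lem:curvature-II} again, now keeping the derivative terms of $\mu$, which in type~I need not be constant. Type~I is what allows these extra terms to be controlled.

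\textbf{Setup.} Since $\psi$ is of type~I, $V$ never vanishes and $\xi\cdot\psi=-i\psi$. Moreover $\alpha=2\mu|\psi|^2/|V|=2\mu$, because $|V|=|\psi|^2$. By~\ref{prop:principal-1} we have $d\alpha\wedge\xi^\flat=0$, hence $d\mu=\xi(\mu)\,\xi^\flat$. In particular $X(\mu)=0$ for every $X\in\mathcal D$.

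\textbf{Step 1: the curvature of $\nabla^{g,A}$ on $\psi$.} Differentiating $\nabla^{g,A}_Y\psi=i\mu Y\cdot\psi$ once more, with the curvature convention~\eqref{eq:curv}, I expect
\[
R^{g,A}_{XY}\psi=-i X(\mu)\,Y\cdot\psi+iY(\mu)\,X\cdot\psi+\mu^2\,(Y\cdot X-X\cdot Y)\cdot\psi ,
\]
up to signs that I will fix in the computation.

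\textbf{Step 2: contraction with the Ricci identity.} I insert this into the identity $\sum_i e_i\cdot R^{g,A}_{Xe_i}\psi=\tfrac12\Ric^g(X)\cdot\psi+\tfrac i2 (X\lrcorner\Omega^A)\cdot\psi$, exactly as in~\ref{lem:curvature-II}. The $\mu^2$-terms give a multiple of $X\cdot\psi$. The derivative terms give, up to signs, $i n X(\mu)\psi$ plus $i\,d\mu\cdot X\cdot\psi$.

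\textbf{Step 3: the real part.} I take the inner product with $\psi$ and keep only the real part. The contributions $\langle X\cdot\psi,\psi\rangle$ and $\langle\Ric^g(X)\cdot\psi,\psi\rangle$ are purely imaginary, so they drop out. The term $\tfrac i2\langle (X\lrcorner\Omega^A)\cdot\psi,\psi\rangle$ is real and equals a nonzero constant times $\Omega^A(X,V)$, by the definition of the Dirac current. What remains is the real part of the $d\mu$-terms, and these are where type~I is needed.

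\textbf{Step 4: the key step, restricting to $X\in\mathcal D$.} For $X\in\mathcal D$ we have $X(\mu)=0$. The remaining term is $\langle d\mu\cdot X\cdot\psi,\psi\rangle=\xi(\mu)\langle\xi\cdot X\cdot\psi,\psi\rangle$. Using skew-adjointness of Clifford multiplication and $\xi\cdot\psi=-i\psi$, this becomes $\xi(\mu)\langle X\cdot\psi,i\psi\rangle$, which is a multiple of $g(X,V)=0$. Hence $\Omega^A(X,V)=0$ for all $X\in\mathcal D$. For $X=\xi$ we have $\Omega^A(V,V)=0$ trivially by skew-symmetry. Together these give $V\lrcorner\Omega^A=0$.

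The main obstacle is Step~1: tracking the signs in $R^{g,A}\psi$ and in the Ricci identity under the paper's curvature convention. A sign error there could make the $X(\mu)$-contribution appear not to cancel. However, that contribution vanishes anyway on $\mathcal D$, so the argument is robust to such errors.
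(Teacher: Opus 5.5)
Your proposal is correct and follows essentially the same route as the paper: the formula for $R^{g,A}\psi$, contraction via the Ricci identity, restriction to $X\in\mathcal D$ using $d\mu=\xi(\mu)\,\xi^\flat$ and $\xi\cdot\psi=-i\psi$, and then pairing with $\psi$. The differences are cosmetic. You kill the $d\mu$-term by computing $\langle \xi\cdot X\cdot\psi,\psi\rangle=-g(X,V)=0$ directly, whereas the paper rewrites $2i\,\grad\mu\cdot X\cdot\psi$ as $-2\xi(\mu)\,X\cdot\psi$. You also state explicitly the trivial component $\Omega^A(V,V)=0$, which the paper leaves implicit.
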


\begin{proof}
    The proof is analogous to that of~\ref{lem:curvature-II}, except that $\mu$ is not necessarily constant.
    We distinguish between the spin lift of the Levi-Civita connection $\nabla^g$ and the $\Spin^c$-connection $\nabla^{g,A}$ associated with $\nabla^g$ and the auxiliary connection $A$.
    We have, for all vector fields $Y$, $Z\in \mathfrak{X}(M)$,
    \[
        R^{g,A}_{YZ}\psi  = -i\,Y(\mu)\,Z\cdot\psi +i\,Z(\mu)\,Y\cdot\psi +\mu^2\bigl(Z\cdot Y\cdot-Y\cdot Z\cdot\bigr)\psi.
    \]
    Let $\{e_1,\dots,e_n\}$ be a local orthonormal frame.
    Since $X(\mu)=0$ for $X\in\mathcal D$, summing over $e_i$ yields
    \[
        \sum_{i=1}^n e_i\cdot R^{g,A}_{Xe_i}\psi = - 2\mu^2(n-1)\,X\cdot\psi +i\, \grad \mu \cdot X\cdot \psi.
    \]
    By the Ricci identity, we obtain
    \[
        \Ric^g(X)\cdot\psi = - 4\mu^2(n-1)\,X\cdot\psi +2i\, \grad\mu\cdot X\cdot\psi -i\,(X\lrcorner\Omega^A)\cdot\psi .
    \]
    Since $2\mu \xi^\flat$ is closed and $\xi^\flat$ is closed, $d\mu \wedge \xi^\flat = 0$.
    Thus, we can write $d\mu = g(\grad \mu, \xi) \, \xi^\flat$.
    Since $\psi$ is of type~I, it follows that $\grad \mu \cdot \psi = - i g(\grad \mu, \xi) \psi$.
    Indeed, for $X\in \mathcal{D}$,
    \[
        2i\,\grad\mu\cdot X\cdot\psi = - 2 g(\grad \mu, \xi) \, X\cdot \psi.
    \]
    Hence, for $X\in \mathcal{D}$, we take the inner product with $\psi$ and use that $\langle Y\cdot \psi,\psi\rangle$ is purely imaginary for every vector field $Y$ to obtain
    \[
        \Omega^A (X,V) = g(V, X \lrcorner \Omega^A) = 0.
    \]
    This proves the lemma.
\end{proof}

This lemma reduces the question of whether the $\Spin^c$-connection is induced by a spin structure to the corresponding question on the leaves of $\mathcal{D}$.
Indeed, this motivates the introduction of Kenmotsu geometry, which turns out to be the natural odd-dimensional counterpart of the transverse Kähler picture arising from type~I spinors.
An almost contact metric structure on a Riemannian manifold $(M,g)$ is a triple $(\mathcal{J},\xi,\eta)$ consisting of a $(1,1)$-tensor field $\mathcal{J}$, a unit vector field $\xi$ and a $1$-form $\eta$ satisfying 
\begin{equation*}
    \mathcal{J}^{2}X = -X + \eta(X)\,\xi, \qquad     g\bigl(\mathcal{J} X,\mathcal{J} Y\bigr) = g(X,Y) - \eta(X)\,\eta(Y),
\end{equation*}
for all vector fields $X$, $Y$ on $M$, see~\cite{B2010}.
Since we consistently use $\varphi$ for a spinor field, we denote the $(1,1)$-tensor field of an almost contact metric structure by $\mathcal{J}$ instead.

\begin{definition}\label{def:kenmotsu}
    Let $\alpha \in \mathcal{C}^{\infty} (M)$ be a function.
    An almost contact metric manifold $(M^{2m+1},g,\mathcal{J},\eta, \xi)$ is called an \emph{$\alpha$-Kenmotsu manifold} if, for any two vector fields $X$, $Y \in \mathfrak{X}(M)$, the following condition holds:
    \[
        (\nabla^g _X \mathcal{J} )Y = \alpha\big( g( \mathcal{J} X,Y)\,\xi -\eta(Y)\,\mathcal{J} X\big).
    \]
    This implies that $(\nabla ^g _X\eta)(Y) = \alpha\big(g(X,Y)-\eta(X)\, \eta(Y)\big)$, and hence $\nabla ^g _X \xi = \alpha\big(X-\eta(X)\,\xi\big)$, where $\nabla^g$ denotes the Levi-Civita connection of the metric~$g$.
    Furthermore, such a manifold is $\alpha$-Einstein if
    \[
        \Ric ^g(X, Y) = - \left((n-1)\alpha^2 + \xi(\alpha)\right) g(X, Y) - (n-2) \xi(\alpha) \eta(X) \eta(Y),
    \]
    for all $X$, $Y \in \mathfrak{X}(M)$.
\end{definition}
We refer to~\cite{K1972} for the properties of Kenmotsu manifolds.
These manifolds play the same role for imaginary $\Spin^c$-Killing spinors as Sasakian manifolds do for real $\Spin^c$-Killing spinors.
The $\alpha$-Einstein condition is a particular case of the $\eta$-Einstein manifolds; see~\cite{B2010} for a further exposition of this topic.

\begin{theorem}\label{thm:kenmotsu}
    Let $(M,g)$ be a simply connected $\Spin^c$-Riemannian manifold of dimension $n \geq 3$ and admitting a generalized imaginary $\Spin^c$-Killing spinor $\psi$ with nowhere-vanishing Dirac current $V$ and Killing function $i\mu$.
    Assume that the distribution $\mathcal{D} = \ker(\xi^\flat)$ is irreducible and its leaves are complete.
    Then one of the following two possibilities holds:
    \begin{itemize}
        \item The $\Spin^c$-connection over $(M,g)$ is induced by a spin structure.
        \item $(M,g)$ is a non-$(2\mu)$-Einstein Kenmotsu manifold.
    \end{itemize}
\end{theorem}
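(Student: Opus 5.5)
The plan is to split according to the type of $\psi$. In each case I show that either the auxiliary connection $A$ is flat, which gives the first alternative, or the leaves of $\mathcal D$ are non-Ricci-flat Kähler, which gives the second.

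First I check that, whenever $\Omega^A=0$, the first alternative holds. Since $M$ is simply connected, a flat $\U(1)$-connection $A$ admits a global unitary parallel section of $\mathbb L$. As recalled in Section~\ref{sec:preliminaries}, this identifies $\nabla^{g,A}$ with the spinorial Levi-Civita connection of a spin structure.

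\textbf{Type~II.} Here $\dim M\geq 3$ and the leaves of $\mathcal D$ are complete, so the proof of~\ref{thm:typeII-global}(2) applies directly. In that proof, Moroianu's classification excludes the non-spin case on $\widetilde F$, giving $\Omega^A(\mathcal D,\mathcal D)=0$. Lemma~\ref{lem:curvature-II} gives $V\lrcorner\Omega^A=0$. Hence $A$ is flat and we are in the first alternative.

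\textbf{Type~I, setup.} Since $M$ is simply connected and the leaves are complete, \ref{lem:step-3} gives
\[
(M,g)\cong\bigl(J\times F,\ dt^2+u(t)^2h\bigr), \qquad u'=2\mu u .
\]
Here $F$ is a leaf, hence simply connected, complete, and irreducible by hypothesis. By~\ref{prop:leaves}, $F$ carries a non-trivial parallel $\Spin^c$-spinor for the restricted connection $A|_F$. Moroianu's classification of complete, simply connected, irreducible $\Spin^c$-manifolds with parallel spinors leaves two cases:
\begin{itemize}
\item[(a)] the restricted connection $A|_F$ is flat and $F$ is Ricci-flat spin;
\item[(b)] $F$ is a non-Ricci-flat Kähler manifold with the canonical or anticanonical $\Spin^c$-structure, and $\Omega^A|_F$ is a nonzero multiple of the Ricci form.
\end{itemize}
In case~(a) we have $\Omega^A(\mathcal D,\mathcal D)=0$. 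Lemma~\ref{lem:curvature-I} gives $V\lrcorner\Omega^A=0$, so $\Omega^A=0$, and the first step yields the first alternative.

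\textbf{Type~I, Kähler leaves.} In case~(b), let $J_F$ be the Kähler structure of $(F,h)$ and define on $M$
\[
\eta=\xi^\flat=dt,\qquad \mathcal J|_{\mathcal D}=J_F \ (\text{lifted, independent of } t),\qquad \mathcal J\xi=0 .
\]
The algebraic almost contact metric identities follow because $J_F$ is $h$-orthogonal, and hence $u^2h$-orthogonal on each slice. To verify the $\alpha$-Kenmotsu condition with $\alpha=u'/u=2\mu$, I would use the O'Neill formulas for the warped product:
\[
\nabla^g_XY=\nabla^h_XY-\alpha\, g(X,Y)\,\xi,\qquad \nabla^g_X\xi=\alpha X,\qquad \nabla^g_\xi X=\alpha X \quad (X,Y\in\mathcal D \text{ lifted}),
\]
together with $\nabla^hJ_F=0$. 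A cleaner route is to use the $\xi$-parallel $(\alpha,\xi)$-connection $\nabla$ of~\ref{prop:principal-1}. It restricts to the Levi-Civita connection on the leaves, because $\nabla^\top=\iota^*\nabla^A$ in the proof of~\ref{prop:leaves}. Hence $\nabla\mathcal J=0$, and then
\[
(\nabla^g_X\mathcal J)Y=[S_X,\mathcal J]Y,
\]
which one expands directly into the Kenmotsu formula of Definition~\ref{def:kenmotsu}.

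\textbf{Non-$(2\mu)$-Einstein.} By~\ref{P:Formulas-Curvature}, the $\alpha$-Einstein condition is equivalent to $\Ric=(n-2)\xi(\alpha)\,\eta\otimes\eta+\ldots$ collapsing to $\Ric|_{\mathcal D}=0$ for the connection $\nabla$. The remaining $\xi$-components are fixed by the formulas of~\ref{P:Formulas-Curvature}. Since $\nabla$ restricts to the Levi-Civita connection of the leaves, this means exactly that $F$ is Ricci-flat. That contradicts case~(b).

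\textbf{Main obstacle.} The hardest step is the precise use of Moroianu's classification on the leaves. I need the dichotomy "flat $A|_F$ versus non-Ricci-flat Kähler" for the restricted auxiliary connection, and I must make sure irreducibility excludes mixed product cases. I would also need to check that the $\alpha$-Einstein comparison via~\ref{P:Formulas-Curvature} is exactly Ricci-flatness of the leaves, including the $\xi(\alpha)$ terms.
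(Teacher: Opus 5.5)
Your proposal is correct and follows the paper's proof. It uses the same type split and the same combination of $V\lrcorner\Omega^A=0$ (Lemmas~\ref{lem:curvature-II} and~\ref{lem:curvature-I}) with Moroianu's classification on the leaves. It also uses the same identity $\nabla^g\mathcal J=[S,\mathcal J]$ to get the $2\mu$-Kenmotsu condition, and the same observation that $2\mu$-Einstein is equivalent to $\Ric^\nabla=0$, i.e.\ to Ricci-flat leaves.

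The only variation is how you build $\mathcal J$. You lift $J_F$ through the global warped product of Lemma~\ref{lem:step-3}, so $\nabla_\xi\mathcal J=0$ must come from $t$-independence together with $\nabla_\xi Y=2\mu Y$ for basic $Y$. Leafwise restriction alone only gives $\nabla_X\mathcal J=0$ for $X\in\mathcal D$. The paper instead defines $\mathcal J$ spinorially by $\mathcal J Y\cdot\phi=-i\,(Y-g(Y,\xi)\xi)\cdot\phi$, and gets $\nabla\mathcal J=0$ in all directions at once from $\nabla^A\phi=0$.
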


\begin{proof}
    If $\psi$ is of type~I or type~II, then by~\ref{lem:curvature-I} and~\ref{lem:curvature-II} we have $V \lrcorner \Omega^A = 0$, where $\Omega^A$ is the curvature $2$-form of the auxiliary $\U(1)$-connection $A$.
    Since $M$ is simply connected, the $\Spin^c$-connection $\nabla^{g,A}$ is induced by a spin structure if and only if $\Omega^A = 0$.
    Consequently, if $X \lrcorner \Omega ^A = 0$ for all $X \in \mathcal{D}$, then $(M,g)$ is spin.

    Assume first that $\psi$ is of type~II.
    By~\ref{prop:leaves}, the leaves of $\mathcal{D}$ carry two real $\Spin^c$-Killing spinors with opposite Killing numbers.
    Applying~\cite[Cor.~4.2]{M1997}, we conclude that the restricted $\Spin^c$-connection must be spin.

    Assume now that $\psi$ is of type~I.
    Again by~\ref{prop:leaves}, the leaves of $\mathcal{D}$ carry a non-trivial parallel $\Spin^c$-spinor.
    We now apply Moroianu's classification (see~\cite[Thm.~3.1]{M1997} and~\cite[Prop.~3.1]{M1997}) of simply-connected and irreducible $\Spin^c$-manifolds carrying parallel spinors to the universal Riemannian covering of each leaf of $\mathcal D$.
    There are two possibilities.
    Either the restricted auxiliary connection is flat, in which case the induced $\Spin^c$-connection on the leaves is induced by a spin structure, or the leaves are non-Ricci-flat Kähler manifolds and the restricted $\Spin^c$-structure is the canonical or the anticanonical one.
    The first possibility gives the first alternative of the theorem.
    We therefore assume from now on that the second possibility occurs.
    
    This determines a complex structure $J$ on the distribution $\mathcal{D}$.
    We now construct an almost contact metric structure on $(M,g)$.
    We set $\eta = \xi^\flat$ and define
    \[
        \mathcal{J}\colon TM \to TM \qquad \mathcal{J}(X) = J (X - g(X, \xi)\xi).
    \]
    A straightforward computation shows that $(M,g,\mathcal{J}, \eta, \xi)$ is an almost contact metric structure.
    We now check which type of almost contact metric structure it is.
    See~\cite{CG1990} for the classical classification and~\cite{ADDK2026} for a new perspective on this classification.
    We first note that the Levi-Civita connection of $g$ restricted to the leaves of $\mathcal{D}$ is given by
    \[
        \nabla^\top _X Y = \nabla^g_X Y+ 2\mu \, g(X,Y)\,\xi,
    \]
    for all $X$, $Y \in \mathcal{D}$.
    Let $\nabla$ also denote the restriction of the $(\alpha,\xi)$-connection to $\mathcal{D}$, where $\alpha = 2\mu$.
    Hence, again, $\nabla^\top _X Y = \nabla_X Y$ for all $X$, $Y \in \mathcal{D}$.
    Furthermore, by~\eqref{eq:spinor-paralelo}, we have, for $\phi = \frac{\psi}{|\psi|}$,
    \[
        \nabla_X \phi = 0 \qquad \forall X \in \mathfrak{X}(M).
    \]
    We denote by $\iota $ the inclusion map from the leaves of $\mathcal{D}$ to $M$.
    Thus, $\nabla^\top_X \iota^* \phi = 0$ for all $X\in \mathcal{D}$.
    By~\cite[Lem.~3.5]{M1997}, the leaves of $\mathcal{D}$ are Kähler manifolds with complex structure $J$ defined by
    \[
        JX\bullet (\iota^* \phi) = -iX \bullet (\iota^* \phi),
    \]
    where $\bullet$ denotes the Clifford multiplication of the restricted $\Spin^c$-structure on the leaves of $\mathcal{D}$.
    We now extend $J$ to a $(1,1)$-tensor field $\mathcal J$ on $TM$ by setting
    \[
        \mathcal J(\xi)=0, \qquad \mathcal J|_{\mathcal D}=J,
    \]
    or equivalently,
    \[
        \mathcal J Y \cdot \phi = -i\, (Y- g(Y,\xi)\xi) \cdot \phi, \qquad \forall Y \in \mathfrak{X}(M).
    \]
    Recall that these two descriptions are equivalent, because of our two conventions: $\xi \cdot \phi = - i \phi$ and $\bar X \bullet \iota^* \varphi = \iota^*(\bar X \cdot \xi \cdot \varphi)$, for all $\bar{X} \in \mathcal{D}$. 
    
    We now claim that $\mathcal{J}$ is $\nabla$-parallel.
    To prove this, we differentiate the previous identity with respect to $\nabla^A$, use $\nabla g= 0$ and $\nabla \xi = 0$, and obtain
    \[
        \nabla _X(\mathcal{J}Y) \cdot \phi = - i\, (\nabla _X Y - g(\nabla_XY,\xi ) \xi)\cdot \phi = \mathcal{J}(\nabla_X Y) \cdot \phi, 
    \]
    for all $X$, $Y \in \mathfrak{X}(M)$.
    Hence, since the Clifford multiplication is injective and $\phi$ never vanishes, we obtain $\nabla_X \mathcal{J}Y = \mathcal{J}(\nabla_X Y)$ for all $X$, $Y \in \mathfrak{X}(M)$, and thus $\nabla \mathcal{J} = 0$.

    We now compute $S_X \mathcal{J}$, where $\nabla = \nabla^g - S$.
    We have
    \begin{align*}
        (S_X \mathcal{J}) (Y) = S_X (\mathcal{J} Y) - \mathcal{J} (S_X Y) = - 2\mu g(X, \mathcal{J} Y) \xi - 2\mu g(Y,\xi)\mathcal{J} X.
    \end{align*}
    Since $\nabla^g \mathcal{J}= S \mathcal{J}$, we have proved that $(M,g, \mathcal{J}, \eta, \xi)$ is a $2\mu$-Kenmotsu manifold. 

    Finally, we determine when the leaves of $\mathcal D$ are Ricci-flat.
    Since $\nabla$ coincides with $\nabla^\top$ along the leaves, the restriction of the Ricci tensor of $\nabla$ to $\mathcal D$ agrees with the Ricci tensor of the leaves.
    Therefore, the leaves are Ricci-flat if and only if $\Ric(X,Y)=0$ for all $X$, $Y \in \mathcal{D}$.
    Moreover, because $\nabla \xi=0$, we also have $\Ric(X,\xi)=0$ for all $X\in\mathfrak X(M)$.
    On the other hand, for every $X\in\mathcal D$, \ref{P:Formulas-Curvature}, together with the fact that $X(2\mu)=0$, yields
    \[
        \Ric(\xi,X)=\Ric^g(\xi,X)=\Ric^g(X,\xi)=\Ric(X,\xi) - (n-2)X(\alpha)=0.
    \]
    Hence, all mixed Ricci components vanish, and the leaves of $\mathcal D$ are Ricci-flat precisely when $(M,g)$ is a $2\mu$-Einstein Kenmotsu manifold. 
    This completes the proof.
\end{proof}

\section[Generalized Imaginary Spin-c-Killing spinors of type~I]{\texorpdfstring{Generalized Imaginary $\Spin^c$-Killing spinors of type~I}{Generalized Imaginary Spin-c-Killing spinors of type~I}}\label{sec:type-I}

In this final section, we reverse the viewpoint developed in subsection~\ref{sec:charact}.
Instead of starting from a generalized imaginary $\Spin^c$-Killing spinor and extracting an $(\alpha,\xi)$-connection, we begin with a metric connection with vectorial torsion carrying a parallel $\Spin^c$-spinor and recover a generalized imaginary $\Spin^c$-Killing spinor of type~I.
This provides the promised connection-theoretic reformulation of the type~I equation.

\begin{theorem}\label{thm:Parallel to Imaginary}
    Let $(M,g)$ be a complete $\Spin^c$-Riemannian manifold of dimension $n\geq 3$, endowed with a non-flat $(\alpha,\xi)$-connection $\nabla$.
    Assume that $\xi \lrcorner \Omega^A = 0$.
    Then the following two statements are equivalent:
    \begin{enumerate}
        \item There exists a generalized imaginary $\Spin^c$-Killing spinor $\psi$ of type~I with normalized Dirac current $\xi$ and Killing function $i\mu$, where $\mu = \frac{\alpha}{2}$.
        \item There exists a nonzero $\nabla^A$-parallel $\Spin^c$-spinor $\phi$ whose Dirac current is $\xi$.
    \end{enumerate}
\end{theorem}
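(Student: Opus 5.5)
The plan is to prove the two implications separately. The direction (1)$\Rightarrow$(2) essentially repeats the computation in \ref{prop:leaves}. For (2)$\Rightarrow$(1), I would take the $\xi\cdot\phi=-i\phi$ part of $\phi$ and rescale it conformally, exactly as in \ref{ex:hyperbolic-type-I}.

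\emph{(1)$\Rightarrow$(2).} Let $\psi$ be of type~I with normalized Dirac current $\xi$ and Killing function $i\mu$, where $\mu=\alpha/2$. Since $|V|=|\psi|^2$, the function $2\mu|\psi|^2/|V|$ equals $2\mu=\alpha$. Hence the $(\alpha,\xi)$-connection built in \ref{prop:principal-1} is exactly the given $\nabla$. By \eqref{eq:spinor-paralelo}, the spinor $\phi=\psi/|\psi|$ is $\nabla^A$-parallel. Its Dirac current $W$ satisfies
\[
g(W,X)=i\langle X\cdot\psi,\psi\rangle/|\psi|^2=g(V,X)/|V|=g(\xi,X),
\]
so $W=\xi$.

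\emph{(2)$\Rightarrow$(1).} Let $\phi$ be $\nabla^A$-parallel with Dirac current $\xi$.
\begin{enumerate}
\item Since $\nabla$ is metric, the Dirac current of a $\nabla^A$-parallel spinor is $\nabla$-parallel. Hence $\nabla\xi=0$, so $\nabla$ is $\xi$-parallel. By the discussion after \ref{def:nabla}, $\xi^\flat$ is closed and $\mathcal D$ is integrable and $\nabla$-parallel.
\item Split $\phi=\phi^++\phi^-$ into the eigenspaces of $i\xi\cdot$ for the eigenvalues $\pm1$; both parts are $\nabla^A$-parallel. Since $|\phi^+|^2-|\phi^-|^2=g(\xi,\xi)=1$, the spinor $\phi^+$ is nonzero, of constant norm, and satisfies $\xi\cdot\phi^+=-i\phi^+$.
\item Using \eqref{eq:vectorial-spinor} with $V=\alpha\xi$ and $X\cdot\xi\cdot\phi^+=-iX\cdot\phi^+$, I obtain
\[
\nabla^{g,A}_X\phi^+=\tfrac{i\alpha}{2}X\cdot\phi^+-\tfrac{\alpha}{2}g(X,\xi)\phi^+ .
\]
\item I then look for $\psi=e^{h}\phi^+$ with $dh=\tfrac{\alpha}{2}\xi^\flat$. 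Such a $\psi$ satisfies $\nabla^{g,A}_X\psi=i\mu X\cdot\psi$ with $\mu=\alpha/2$. Moreover $\xi\cdot\psi=-i\psi$, which by the characterization in Section~\ref{sec:imaginary-killing} means $\psi$ is of type~I with normalized Dirac current $\xi$.
\end{enumerate}

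The main obstacle is producing $h$, that is, showing that $\alpha\xi^\flat$ is closed (equivalently $d\alpha\wedge\xi^\flat=0$) and then exact. This is where the hypotheses of non-flatness, $\xi\lrcorner\Omega^A=0$, completeness and $n\geq3$ should enter.

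For closedness, I would use the integrability condition of $\nabla^A\phi^+=0$, namely $R^A_{XY}\phi^+=0$, where $R^A$ is $R$ plus $\tfrac{i}{2}\Omega^A$. I would insert the formula for $R$ from \ref{P:Formulas-Curvature}, with the terms $X(\alpha)\bigl(g(Z,\xi)Y-g(Y,Z)\xi\bigr)$, and Clifford-contract as in the proofs of \ref{lem:curvature-II} and \ref{lem:curvature-I}. Taking $X\in\mathcal D$, pairing with $\phi^+$, and using $\xi\lrcorner\Omega^A=0$ together with $\xi\cdot\phi^+=-i\phi^+$ should isolate a term $(n-2)X(\alpha)$ times a nonzero quantity. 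Since $n\ge3$, this would give $X(\alpha)=0$ for $X\in\mathcal D$. If this direct contraction does not close up, I would use the non-flatness of $\nabla$ to rule out a degenerate case in which the $d\alpha$ terms cancel against the $\Omega^A$ terms.

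Once $\alpha\xi^\flat$ is closed, local primitives $h$ exist, and locally $\psi$ is defined. To globalize, the natural choice is $h=\log|\psi|$, so I must show that the periods of $\alpha\xi^\flat$ vanish. Here I would use completeness. The flow of $\xi$ is by geodesics, and along the leaves of $\mathcal D$ the form restricts to zero. Integrating $\alpha\xi^\flat$ along a closed loop then reduces to an integral of $\alpha$ along $\xi$-lines, and I would argue that this vanishes, or pass to the structure given by \ref{lem:step-4}. I am least certain about this globalization step, and it is where I expect to spend most of the effort.
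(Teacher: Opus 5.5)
\textbf{The gap: exactness of $\alpha\xi^\flat$.} The missing idea is that exactness is exactly where the non-flatness of $\nabla$ must be used, and completeness cannot replace it. Here is a counterexample to your completeness argument.

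Take the upper half-space model $g=s^{-2}(ds^2+|dx|^2)$ of $\mathbb H^n$, $n\ge3$. Quotient it by the dilation $\gamma(x,s)=(\lambda x,\lambda s)$ with $\lambda>1$; the result $M$ is a complete hyperbolic manifold. With $\xi=-s\partial_s$ and $\alpha=1$ we get $\alpha\xi^\flat=-ds/s=df$, where $f=-\ln s$. This descends to a closed form on $M$ with period $-\ln\lambda$, so it is not exact. By \eqref{eq:Symmetries}, with $\nabla^W$ the flat connection of $ds^2+|dx|^2$, the $\gamma$-invariant orthonormal frame $\{s\partial_{x_i},-s\partial_s\}$ is $\nabla$-parallel, so $\nabla$ is flat. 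Take the spin structure given by this frame and $A$ trivial, so $\xi\lrcorner\Omega^A=0$. Then a constant spinor $\phi$ with $\xi\cdot\phi=-i\phi$ and $|\phi|=1$ satisfies (2). Yet (1) fails: by \ref{prop:principal-1}, a type~I spinor with normalized Dirac current $\xi$ and $\mu=\alpha/2$ would give $\alpha\xi^\flat=d\ln|V|$, which is exact.

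Only the non-flatness hypothesis excludes this example. So no argument that integrates $\alpha$ along $\xi$-lines using completeness can make the periods vanish. Falling back on \ref{lem:step-4} is circular, since it assumes \eqref{assumption:H1}, i.e.\ the Killing spinor you are constructing. Even on the universal cover, where such a spinor exists, deck transformations can act by translations that rescale $|\widetilde V|$, exactly as in this example.

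\textbf{How the paper gets exactness.} It considers the closed Weyl structure $\nabla^W=\nabla-\alpha\xi^\flat\otimes\Id$. By \eqref{eq:Misma-Curvatura}, $\nabla^W$ has the same curvature as $\nabla$, so it is non-flat. By \eqref{eq:Symmetries}, $e^f\xi$ is locally $\nabla^W$-parallel, so $\nabla^W$ is reducible in the direction of its Lee form. Then \cite[Thm.~1.1]{C2026} forces $\nabla^W$ to be exact.

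\textbf{Closedness does not need non-flatness}, contrary to your fallback. Set $X=\xi$ in the spinorial Ricci identity for $\nabla$ and use $R^A\phi=0$ and $\xi\lrcorner\Omega^A=0$; this gives $\Ric(\xi,\cdot)=0$. Also $\nabla\xi=0$ gives $\Ric(\cdot,\xi)=0$. Then \ref{P:Formulas-Curvature} yields $0=\Ric(X,\xi)-\Ric(\xi,X)=(n-2)X(\alpha)$ for $X\in\mathcal D$. Your proposed pairing with $\phi^+$ for $X\in\mathcal D$ makes the $\Ric$ and $\Omega^A$ terms vanish, but does not by itself isolate this factor.

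The rest of your argument matches the paper: (1)$\Rightarrow$(2), the splitting $\phi=\phi^++\phi^-$, and the construction $\psi=e^{h}\phi^+$ once a global $h$ is available.
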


\begin{proof}
    The implication $(1)\Rightarrow(2)$ is a direct consequence of the analysis of~\ref{prop:principal-1}.
    Furthermore, by~\eqref{eq:spinor-paralelo}, $\phi = \frac{\psi}{|\psi|}$ is $\nabla^A$-parallel.
    
    We now prove that $(2)$ implies $(1)$.
    The proof has two steps.
    First, we show that $\alpha \xi^\flat$ is exact.
    Then we construct a generalized imaginary $\Spin^c$-Killing spinor of type~I from the $\nabla^A$-parallel $\Spin^c$-spinor $\phi$.
    
    Since $\phi$ is $\nabla^A$-parallel and $\xi$ is its Dirac current, we compute
    \begin{align*}
        (\nabla_X \xi^\flat) (Y) &= i X \left(\langle Y \cdot \phi, \phi \rangle \right) - i\langle \nabla_X Y \cdot \phi, \phi \rangle \\
        &= i \langle \nabla_X Y \cdot \phi, \phi \rangle -  i \langle \nabla_X Y \cdot  \phi, \phi \rangle = 0.
    \end{align*}
    Thus, $\nabla \xi = 0$ and, consequently, $\Ric(\cdot, \xi) = 0$.
    Furthermore, since $X \mapsto S_X \xi$ is a symmetric endomorphism, we have that $\xi^\flat$ is closed.

    We now use the Ricci identity for vectorial connections; see~\cite[Thm.~4.1]{AK2016}.
    Recall that we use the opposite sign convention for the curvature.
    We obtain
    \begin{equation*}
        \Ric (X) \cdot \phi = \sum_{i=1}^n e_i \cdot R_{Xe_i} \phi +  (d\alpha \wedge \xi^\flat \wedge X) \cdot \phi,
    \end{equation*}
    where $\Ric(X)$ denotes the Ricci endomorphism characterized by $g(\Ric(X),Y)=\Ric(X,Y)$.
    Since $R^A = R + \frac{i}{2} \Omega^A$ and $R^A_{XY}\phi = 0$ for all $X,Y\in\mathfrak{X}(M)$, we have that
    \begin{equation*}
        \Ric (X) \cdot \phi = -\frac{i}{2}  (X \lrcorner \Omega^A) \cdot \phi +  (d\alpha \wedge \xi^\flat \wedge X) \cdot \phi.
    \end{equation*}
    We now take $X = \xi$ and obtain that $\Ric(\xi) \cdot \phi = -\frac{i}{2} (\xi \lrcorner \Omega^A) \cdot \phi$.
    Since $\xi \lrcorner \Omega^A = 0$ and $\phi \neq 0$, it follows that $\Ric (\xi) = 0$.
    We now take $X \in \mathcal{D} = \ker(\xi^\flat)$ and use~\ref{P:Formulas-Curvature} together with $\Ric(X,\xi) = 0 = \Ric (\xi,X)$.
    We then obtain
    \begin{align*}
    0 = \Ric (\xi, X) = \Ric^g (\xi,X) = \Ric^g(X,\xi) = \Ric(X,\xi)- (n-2)X(\alpha) = - (n-2)X(\alpha).
    \end{align*}
    Thus, since $n\geq 3$, we have $X(\alpha) = 0$ for all $X \in \mathcal{D}$ and, consequently, $d\alpha \wedge \xi^\flat = 0$.
    In other words, $\alpha \xi^\flat$ is closed.
    
    We now consider the Weyl structure defined by $\nabla^W = \nabla - \alpha\xi^\flat\otimes \Id$.
    It is reducible in the direction of the Lee form $\alpha \xi^\flat$, so we are in the setting of~\cite[Thm.~1.1]{C2026}.
    By~\eqref{eq:Misma-Curvatura}, the connection $\nabla^W$ is not flat because $\nabla$ is not flat.
    Therefore,~\cite[Thm.~1.1]{C2026} implies that this Weyl structure is exact.
    Hence, $\alpha \xi^\flat$ is exact.

    Since $\alpha \xi^\flat$ is exact, there exists a function $f \in \mathcal{C}^{\infty} (M)$ such that $d f = \alpha \xi^\flat$.
    Since $\nabla \xi = 0$, the endomorphism $i\xi\cdot$ is $\nabla^A$-parallel.
    Moreover, $(i\xi\cdot)^2 = \Id$.
    Therefore, the spinor bundle splits as
    \[
        \Sigma_g^c M = \Sigma_g^+ M \oplus \Sigma_g^- M,
    \]
    where $\Sigma_g^{\pm} M$ are the eigensubbundles of $i\xi\cdot$ associated with the eigenvalues $\pm 1$.
    Let $\phi = \phi^+ + \phi^-$
    be the corresponding decomposition.
    Then $\nabla^A \phi^{\pm} = 0$ and $\xi\cdot \phi^{\pm} = \mp i\phi^{\pm}$.
    Since $\Sigma_g^+ M$ and $\Sigma_g^- M$ are orthogonal and the Dirac current of $\phi$ is $\xi$, we obtain
    \[
        1 = g(\xi,\xi) = i\langle \xi\cdot \phi,\phi\rangle = |\phi^+|^2 - |\phi^-|^2.
    \]
    In particular, $\phi^+$ is nowhere vanishing.

    We now define the spinor field $\psi = e^{f/2} \, \phi^+$.
    Since $\nabla^A \phi^+ = 0$, we have
    \[
        \nabla_X^A \psi = \nabla_X^A \left( e^{f/2}\, \phi^+\right) = \frac{X (f)}{2} \, e^{f/2} \, \phi^+ = \frac{\alpha }{2} \, g(X,\xi) \, \psi.
    \]
    Using that
    \[
        \frac{1}{2}\, S_X \cdot \psi =  - \frac{\alpha}{2} \,  X \cdot \xi \cdot \psi -\frac{\alpha}{2} \, g(X,\xi)\, \psi,
    \]
    we obtain
    \[
        \nabla_X^{g,A} \psi = \nabla_X^A \psi + \frac{1}{2} S_X \cdot \psi = -\frac{\alpha}{2} \,  X \cdot \xi \cdot \psi = \frac{\alpha}{2} i X\cdot \psi.
    \]
    Therefore, $\psi$ is a type~I generalized imaginary $\Spin^c$-Killing spinor (since $\xi \cdot \psi = - i \psi$) with Killing function $i\mu$, where $\mu = \frac{\alpha}{2}$.
    This completes the proof.
\end{proof}

\begin{remark}
    The correspondence in~\ref{thm:Parallel to Imaginary} is explicit: a type~I generalized imaginary $\Spin^c$-Killing spinor $\psi$ is sent to
    \[
        \phi=\frac{\psi}{|\psi|},
    \]
    which is $\nabla^A$-parallel. Conversely, a nonzero $\nabla^A$-parallel spinor $\phi$ whose Dirac current is $\xi$ gives
    \[
        \psi=e^{f/2}\phi^+,
    \]
    where $df=2\mu\xi^\flat$ and $\phi^+$ is the component satisfying $\xi\cdot\phi^+=-i\phi^+$.
\end{remark}

\printbibliography
\end{document}